\documentclass[12pt]{amsart}

 \usepackage{graphicx}
 \usepackage{amscd, color}

\usepackage{marginnote}


\textwidth       15.0cm
\evensidemargin   0.6cm
\oddsidemargin    0.6cm

 \openup 3pt

 \def\al{\alpha}
 \def\be{\beta}
  
 \def\de{\delta}

 \def\ga{\gamma}

 \def\la{\lambda}

 \def\si{\sigma}

 \def\EE{{\mathbf E}}
 \def\VV{{\mathbf V}}
 \def\XX {{\mathbf X}}
 \def\EN{{\mathcal E}}

 \def\R{{\mathbb R}}

  \def\A{{\mathcal  A}}

 \def\ov{\overline}
 \def\un{\underline}

 \def\oo{\mathrm o}
 \def\tt{{\mathrm t}}

\def\um{u_{\mathrm{max}}}

  \renewcommand{\proofname}{{\bf Proof:}}

 \theoremstyle{plain}

  \swapnumbers

 \newtheorem{Thm}{Theorem}[section]
 
 \newtheorem{Lemma}[Thm]{\bf Lemma}
 
 \newtheorem{Corollary}[Thm]{\bf Corollary}
 \newtheorem{Theorem}[Thm]{\bf Theorem}
 \newtheorem{Proposition}[Thm]{\bf Proposition}

 \theoremstyle{definition}
 \newtheorem{Definition}[Thm]{\bf Definition}

 \theoremstyle{remark}
 \newtheorem{Remark}[Thm]{\bf Remark}

 \newtheoremstyle{Cl}
  {5pt}
  {3pt}
  {\sl}
  {}
  {\it}
  {:}
  {.5em}
  {}

 \theoremstyle{Cl}

 \def\begincproof{
                  \renewcommand{\proofname}{\it Proof:}
                  \begin{proof}
                 }

 \def\endcproof{
                \renewcommand{\qedsymbol}{$\diamondsuit$}
                \end{proof}
                \renewcommand{\qedsymbol}{\openbox}
                \renewcommand{\proofname}{\bf Proof:}
               }


 \renewcommand{\proofname}{{\bf Proof:}}

\title[Discounted H-J equations on networks]
{Discounted Hamilton-Jacobi equations on networks  and asymptotic
analysis}

\author{Marco Pozza}
\address{Dipartimento di Matematica, Sapienza Universit\`a  di Roma, Italy.}
\email{pozza@mat.uniroma1.it}

\author{Antonio Siconolfi}
\address{Dipartimento di Matematica, Sapienza Universit\`a  di Roma, Italy.}
\email{siconolfi@mat.uniroma1.it}

\subjclass[2010]{35F21, 35R02, 35B51, 49L25.}
\keywords{Hamilton-Jacobi equation, Embedded networks, Graphs,
Viscosity solutions, Comparison principle,
Discrete functional equation on graphs, Hopf--Lax formula, Discrete
weak KAM theory.}

\begin{document}
\maketitle

\begin{abstract} We study discounted Hamilton--Jacobi equations on networks, without putting any restriction on their geometry. Assuming the
Hamiltonians are  continuous and coercive, we establish a comparison principle and provide representation formulae for solutions.  We follow
the approach introduced in  \cite{SiconolfiSorrentino}, namely we associate to the differential problem on the network, a discrete functional
equation on an abstract underlying graph. We perform some qualitative analysis and single out a distinguished subset of vertices, called
$\lambda$--Aubry set,  which shares some properties of the Aubry set for Eikonal equations on compact manifolds.  We finally study the
asymptotic behavior of solutions and $\lambda$--Aubry sets as the discount factor $\lambda$ becomes infinitesimal.
\end{abstract}

\section{Introduction}  We are concerned with discounted Hamilton--Jacobi equations on networks. We establish a comparison principle, provide
representation formulae for solutions  and perform some qualitative analysis. We emphasize that our results apply without any restriction on
the geometry of the network. In particular multiple arcs connecti ng a given pair of vertices are allowed as well as loops or multiple loops
based on a single vertex.

 Given a finite family of  Hamiltonians $H_\ga$  defined on $[0,1] \times \R$, indexed by a parameter $\ga$,  we consider the corresponding
 discounted equations
\begin{equation}\label{start}
 \la\, u + H_\ga(s,u')=0 \qquad\hbox{in $(0,1)$},
\end{equation}
with discount factor  $\la$   independent of $\ga$.  The $H_\ga$ are assumed continuous in both arguments and coercive in the momentum
variable, no convexity is required, see assumptions {\bf (H1)}, {\bf (H2)} in Section \ref{setting}.

Since  the Hamiltonians are unrelated and no boundary conditions are specified,  these equations possess infinite viscosity solutions, when
separately considered.

A  sort of geometric coupling  is  provided  by setting each equation  on an arc of a given network $\Gamma$ immersed in $\R^N$, and
combining them  with  additional conditions at the  vertices, namely  at  the junction points of different arcs.  In other terms  boundary
conditions are introduced  in  correspondence to endpoints $0$ and $1$ of the parametrization.  The subtle point however  is that these
conditions are not  required  in the same way at all vertices for supersolutions,  but are given taking into account the geometry of the
network, as made precise in   Definition \ref{def:HJsol} {\bf iii)}.

The aim  is to uniquely select  distinguished solutions  of all equations which piece together continuously   at vertices, in other terms  to
uniquely determine a solution of  the differential problem on $\Gamma$. Namely a continuous functions $u: \Gamma \longrightarrow \R$
satisfying
\[ \la \,  u \circ \ga  + H_\ga(s,( u \circ \ga)')= 0 \qquad\hbox{ in $(0,1)$}\]
in the viscosity sense for any arc $\ga$, plus   vertex conditions.  Following \cite{LionsSouganidis},  \cite{SiconolfiSorrentino}  we
consider state constraint type boundary conditions which correspond  to, at least on the arcs where these boundary conditions apply,  so called maximal solutions of  \eqref{start}. By this we mean that fixing a number $\alpha$, and considering the family $S_\alpha$ of all solutions taking the value
$\alpha$ at $0$, the element of  $S_\alpha$ which also satisfies the state constraint boundary condition at $1$ is  maximal  in $S_\alpha$.

We attack the problem  through the approach introduced in \cite{SiconolfiSorrentino} for the  Eikonal case. Namely we associate to the above
described  problem  on $\Gamma$ a discrete equation  defined on an underlying  graph, which has   the same vertices of $\Gamma$ and  edges
corresponding to the arcs of $\Gamma$.

The two problems  are related  by the fact that the trace on the vertices of a solution of the continuous equation solves the discrete one,
and conversely any solution of the discrete equation can be uniquely extended, from vertices to the whole network, to a solution of the HJ
discounted equation. See \eqref{HJdis} and Theorem \ref{theo:ponte}, Proposition \ref{pontebis} in Section \ref{statgraph}.

We can therefore prove existence and comparison results for the discrete equation and then transfer it to the differential problem on the
network. The advantage of this procedure  is twofold. The   comparison principles are obtained through  simple combinatorial techniques
bypassing Crandall--Lions doubling variables  method,  see Theorem \ref{sol!}.   In addition,  explicit representation formulae for solutions
can be provided,  see \eqref{soldis}, which   makes  possible a  qualitative analysis, using   a suitable functional defined  on the paths of
the graph, see  Definition \ref{functional}.

In this way we can single out a special subset of vertices (and edges), called $\lambda$--Aubry set, which  shares some properties of  the
Aubry set for Eikonal equations on networks with convex or quasiconvex  Hamiltonians, see   \cite{SiconolfiSorrentino}.  A similar entity has
been found  for  discounted  equations with regular Hamiltonians, (contact Hamiltonians)  on  compact manifolds  in  \cite{MaroSorrentino},
\cite{WangYan} via dynamical techniques.

Assuming the $H_\ga$ convex, we study the link, as $\lambda$ becomes infinitesimal, of $\lambda$--Aubry set  with the Aubry set of the
corresponding Eikonal equation ($\lambda =0$). In particular  we show, see Proposition \ref{superpoor}, that, for $\lambda$ suitably small,
the $\lambda$--Aubry sets are contained in the Aubry set for the Eikonal equation. This should be compared with the convergence result
established in \cite{MaroSorrentino}.

The paper is organized as follows:  The problem under investigation is presented  in Section \ref{setting}  together with the assumptions on
the Hamiltonians and  the main related definitions. In Section \ref{boundary} we summarize the relevant  properties of one--dimensional
discounted HJ equations posed on an interval.

 In Section \ref{statgraph} we introduce the discrete equation,  prove the link with the differential problem on the network, and establish a
 comparison principle.  Section \ref{discrete}   is devoted to the definition of a functional on the paths of the graph, which will play a
 major role in the representation formulae for solutions described in Section \ref{representation}.  In Section  \ref{lambdaubry} we define
 the $\lambda$--Aubry sets via a condition on cycles. Section \ref{asymptotic} provides in the first part a  summary of the main properties of
 Eikonal equations on networks and then focus on  the behavior of solutions  and $\lambda$--Aubry set as $\lambda \longrightarrow 0$.

 Finally Appendix \ref{immaterial}  collects some basic material on graphs and networks ,  and in Appendix  \ref{basicbis} we provide  some
 proofs of results stated  in Section \ref{boundary}.

\bigskip

\section{Setting of the problem}\label{setting}

We consider  a network $\Gamma$ immersed in
$\R^N$. We denote by $\VV$,  $\mathcal E$ the set of vertices and  arcs, respectively. We also consider the abstract graph $\XX$  underlying
$\Gamma$  with the same vertices of $\Gamma$ and edges that are, loosely speaking, an immaterial copy of the arcs of $\mathcal E$. See
Appendix \ref{immaterial} for more detail and further terminology and notation on graphs and networks.

We are given a
family of  Hamiltonians
\[H_\ga: [0,1] \times \R \to \R \]
indexed by the arcs of the network.  They are unrelated for arcs of different support, and satisfy the compatibility condition

\begin{equation}
H_{- \ga}(s,p) = H_{\ga}(1-s,-p) \qquad\hbox{for any $\ga
\in \EN$}.
  \label{ovgamma}
\end{equation}

We assume the $H_\ga$  to be:

\begin{itemize}
    \item[{\bf (H1)}] continuous in $(s,p)$;
    \item[{\bf (H2)}] coercive in $p$.
\end{itemize}

No convexity conditions are required for the discounted equation. Some additional assumptions  will be introduced for the asymptotic results
of Section \ref{asymptotic} where an Eikonal problem will appear at the limit, as the discount factor goes to $0$. See  hypotheses  {\bf
(H3)},  {\bf (H4)} in Section \ref{asymptotic}

\smallskip

For any given arc $\ga$, we are concerned  with the discounted  equation
\begin{equation}\label{HJg}  \tag{{\bf{HJ}$\ga_\la$}}
   \la \, w  + H_\ga(s,w')=0 \qquad\hbox{in $(0,1)$.}
\end{equation}
The problem we are interested on is a combination of all the \eqref{HJg}. We look for continuous functions $u$ defined on $\Gamma$ such that

\begin{equation}\label{HJ}  \tag{{\bf HJ$\Gamma_\la$}}
  \la \,  u \circ \ga  + H_\ga(s,( u \circ \ga)')= 0 \qquad\hbox{ in $[0,1]$,  for   any $\ga \in \mathcal E$}
\end{equation}
\smallskip
in the viscosity sense, plus suitable conditions at the vertices, as made precise in the forthcoming  Definition \ref{def:HJsol}. We
preliminarily  recall some definition and terminology of viscosity solution theory.
\smallskip
\begin{Definition}
Given a continuous function $w$ in $[0,1]$, we say that a $C^1$
function $\varphi$ is  {\it supertangent} to $w$ at $s \in (0,1)$ if
\[ w = \varphi \;\;\hbox{at $s$} \quad {\rm and}\quad w \leq \varphi \;\;\hbox{in $(s-\de, s+ \de)$ for some $\de >0$.}
\]
The notion of {\it subtangent} is given by just replacing $\leq$  by
$\geq$ in the above formula.

Finally, $\varphi$ is called  {\it constrained subtangent} to $w$ at
$1$ if
\[ w = \varphi \;\;\hbox{at $1$} \quad {\rm and}\quad w \geq \varphi \;\;\hbox{in $(1-\de,1)$ for some $\de >0$.}
\]
A similar notion, with obvious adaptations,  can be given at $t= 0$.\\
\end{Definition}

\smallskip

\begin{Definition} Given  a continuous function $w$ in $[0,1]$, a point $s_0 \in \{0,1\}$,
 we say that it   satisfies
{\it the state constraint boundary condition} for \eqref{HJg} at
$s_0$ if
 \[\la \, \varphi(s_0)+ H_\ga (s_0, \varphi'(s_0)) \geq 0.\]
for any constrained $C^1$ subtangent $\varphi$ to $w$ at $s_0$.
\end{Definition}

\smallskip

\begin{Definition}\label{def:HJsol} We say that $u: \Gamma \longrightarrow \R$ is {\it subsolution} to \eqref{HJ}
if
\begin{itemize}
    \item [{\bf i)}] it is continuous on
    $\Gamma$,
    \item [{\bf ii)}] $s \mapsto u(\ga(s))$ is subsolution to
    \eqref{HJg} in $(0,1)$ for any $\ga \in \EN$.
\end{itemize}
\smallskip
We say that $u$ is {\it supersolution } to \eqref{HJ} if
\begin{itemize}
     \item [{\bf i)}] it is continuous;
     \item [{\bf ii)}] $s \mapsto u(\ga(s))$   is supersolution of
    \eqref{HJg} in $(0,1)$ for any $\ga \in \EN$;
    \item [{\bf iii)}] for
    every vertex $x$ there is at least an arc $\ga$, with $x$ as terminal point, such that   $ u(\ga(s))$
    satisfies the state constraint boundary   condition  for \eqref{HJg}  at $s=1$
\end{itemize}
A function $u$ is said {\it solution} if it is at the same time super and subsolution.
\end{Definition}
Let us observe that {\bf iii)} for supersolutions is actually a partial boundary condition since it is given only at one endpoint. However
when it is combined with a Dirichlet condition at the other endpoint, it gives the uniqueness of the solution as proved in Corollary
\ref{charabis}.  Also notice that in the definition of subsolution no conditions are
required on vertices.

\smallskip

\begin{Remark}\label{neww1}
Passing from   $\ga$ to $-\ga$ and from $H_\ga$ to $H_{-\ga}$, we see that the  condition {\bf iii)} in the definition of supersolution for  a
vertex $x$  can be equivalently given   at $s=0$ considering  the edges with  initial vertex $x$.
\end{Remark}

\smallskip

\begin{Remark}\label{neww2} The condition {\bf iii)} in the  above  definition of supersolution  is the same given in \cite{LionsSouganidis}
at the junction point $0$.  In \cite{LionsSouganidis} the authors   do not impose conditions for the test functions on the other vertices,
but for  the uniqueness principle they  need considering some boundary condition at the other vertices of the junction. We assume condition
{\bf iii)} at any vertex but we get uniqueness of solutions without assuming any additional boundary condition, we do not even single out a
boundary in our network.
\end{Remark}

\bigskip

\section{Local analysis of HJ equations on  arcs}
\label{boundary}

\medskip
We  focus on an arc $\gamma \in \EN$, our treatment is independent of whether  or not
$\ga$ is a cycle.  We  recall some  basic facts about viscosity (sub)solutions to \eqref{HJg},
see for instance \cite{BardiCapuzzo}, \cite{Barles}.

\smallskip

\begin{Theorem}[Comparison Principle] \label{theo:comprinc}
If $u$ is an upper semicontinuous subsolution and $v$ is a lower
semicontinuous supersolution to \eqref{HJg} with $u\le v$ in
$\{0,1\}$, then $u\le v$ in $[0,1]$.
\end{Theorem}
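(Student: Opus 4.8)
The plan is to argue by the classical doubling of variables technique, the novelty here being that coercivity {\bf (H2)} is precisely what keeps the relevant slopes bounded. Suppose, for contradiction, that $M := \max_{[0,1]}(u-v) > 0$; this maximum is attained since $u-v$ is upper semicontinuous on the compact interval $[0,1]$. For $\e > 0$ I would introduce the penalized functional
\[
\Phi_\e(s,t) = u(s) - v(t) - \frac{(s-t)^2}{2\e}, \qquad (s,t) \in [0,1]^2,
\]
which is upper semicontinuous on $[0,1]^2$, and let $(s_\e, t_\e)$ be one of its maximum points. The first step is the standard penalization lemma: as $\e \to 0^+$ one has $\Phi_\e(s_\e,t_\e) \to M$, the penalty $\frac{(s_\e-t_\e)^2}{2\e} \to 0$, and every cluster point $\bar s$ of $(s_\e)$ (equivalently of $(t_\e)$, since $s_\e - t_\e \to 0$) satisfies $u(\bar s) - v(\bar s) = M$.

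Next I would localize away from the boundary. Since $u \le v$ on $\{0,1\}$ while $M > 0$, no cluster point $\bar s$ can lie in $\{0,1\}$; hence $\bar s \in (0,1)$ and, for all $\e$ small, both $s_\e$ and $t_\e$ lie in the open interval $(0,1)$, which is what permits using the viscosity inequalities at interior points. Writing $p_\e := (s_\e - t_\e)/\e$, the map $s \mapsto v(t_\e) + (s-t_\e)^2/(2\e)$ is a $C^1$ supertangent to $u$ at $s_\e$, so the subsolution property gives
\[
\la\, u(s_\e) + H_\ga(s_\e, p_\e) \le 0,
\]
while $t \mapsto u(s_\e) - (s_\e - t)^2/(2\e)$ is a $C^1$ subtangent to $v$ at $t_\e$, so the supersolution property gives
\[
\la\, v(t_\e) + H_\ga(t_\e, p_\e) \ge 0.
\]
Subtracting, I obtain $\la\,(u(s_\e) - v(t_\e)) \le H_\ga(t_\e, p_\e) - H_\ga(s_\e, p_\e)$, where crucially the \emph{same} slope $p_\e$ enters both Hamiltonians.

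The heart of the argument, and the step where coercivity is indispensable, is to show the right-hand side is infinitesimal. From the first inequality $H_\ga(s_\e, p_\e) \le -\la\, u(s_\e) \le \la\, \|u\|_\infty$, so the values $H_\ga(s_\e, p_\e)$ stay bounded; by {\bf (H2)} this forces $(p_\e)$ to be bounded, say $|p_\e| \le R$. On the compact set $[0,1] \times [-R,R]$ the Hamiltonian $H_\ga$ is uniformly continuous by {\bf (H1)}, and since $|s_\e - t_\e| \to 0$ I conclude $H_\ga(t_\e,p_\e) - H_\ga(s_\e,p_\e) \to 0$. Passing to the limit in the subtracted inequality gives $\la\, M \le 0$, contradicting $M > 0$ and $\la > 0$; hence $M \le 0$, i.e.\ $u \le v$ on $[0,1]$. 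I expect the only genuinely delicate point to be pairing the penalization lemma with the coercivity bound on $p_\e$: without {\bf (H2)} the slopes could blow up and the difference of Hamiltonians need not vanish, so coercivity is exactly the structural input that replaces any convexity assumption here.
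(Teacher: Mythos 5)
Your proof is correct and is exactly the classical Crandall--Lions doubling-of-variables argument: the paper itself gives no proof of Theorem \ref{theo:comprinc}, recalling it as a standard fact from \cite{BardiCapuzzo}, \cite{Barles}, and the argument in those references is precisely the one you wrote (penalization, localization in $(0,1)$ via the boundary inequality, and coercivity {\bf (H2)} to bound the slopes $p_\e$ so that continuity {\bf (H1)} can be used uniformly). One cosmetic repair: an upper semicontinuous $u$ need not be bounded below, so $\|u\|_\infty$ may be infinite; instead, note that $\Phi_\e(s_\e,t_\e)\ge M$ and the nonnegativity of the penalty force $u(s_\e)\ge v(t_\e)+M\ge \min_{[0,1]}v+M$, whence $H_\ga(s_\e,p_\e)\le -\la\, u(s_\e)\le -\la\bigl(\min_{[0,1]}v+M\bigr)$, which is the finite bound your coercivity step actually needs.
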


\smallskip

 Given $\al\in \R$, we
define
\begin{eqnarray}
 \um^\ga(s)&=& \sup \{u(s) \mid u \;\hbox{subsolution to \eqref{HJg}}\} \\
  u_\al^\ga (s)&=& \sup \{u(s) \mid u \;\hbox{subsolution to \eqref{HJg}
 with $u(0)\leq \al$} \}\label{defual}
\end{eqnarray}

\smallskip
\begin{Lemma}\label{chara} The  function $\um^\ga$   is characterized by the
property of being a Lipschitz continuous  solution to \eqref{HJg} in
$(0,1)$ satisfying state constraints boundary conditions at $0$ and
$1$.
\end{Lemma}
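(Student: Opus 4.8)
My plan is to prove the two halves of the characterization separately: first that $\um^\ga$ genuinely enjoys the stated properties, and then that those properties pin it down uniquely.

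For the first half I would begin with the a priori estimates forced by coercivity {\bf (H2)}. Writing $m_0=\min_{s\in[0,1]}\min_{p\in\R}H_\ga(s,p)$, which is finite by {\bf (H1)}--{\bf (H2)}, every subsolution satisfies $\la\,u+H_\ga(s,u')\le 0$ at almost every point of $(0,1)$ (a viscosity subsolution of a coercive equation being locally Lipschitz), whence $\la\,u\le -m_0$ and so $u\le -m_0/\la$ throughout $[0,1]$. Thus $\um^\ga$ is bounded above, and since sufficiently negative constants are themselves subsolutions it is also bounded below; coercivity then yields a common Lipschitz constant for the competitors that are relevant to the supremum. Hence $\um^\ga$ is finite and Lipschitz, and as the pointwise supremum of equi-Lipschitz subsolutions it is itself a subsolution of \eqref{HJg} in $(0,1)$.

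The supersolution property in $(0,1)$ and the state constraint conditions at $0$ and $1$ would then follow from maximality by a perturbation argument. If at some $s_0\in[0,1]$ there were a $C^1$ subtangent $\varphi$ (constrained when $s_0\in\{0,1\}$) violating the relevant inequality, i.e.\ $\la\,\varphi(s_0)+H_\ga(s_0,\varphi'(s_0))<0$, then by continuity of $H_\ga$ a slightly lifted copy of $\varphi$ would be a strict subsolution on a one-sided neighborhood of $s_0$ lying above $\um^\ga$ at $s_0$; taking the maximum of $\um^\ga$ with this lifted copy would produce a subsolution strictly larger than $\um^\ga$ at $s_0$, contradicting its definition as a supremum. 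For uniqueness, let $v$ be any Lipschitz solution of \eqref{HJg} satisfying the state constraints at both endpoints. Since $v$ is in particular a subsolution, $v\le \um^\ga$ is immediate, and the whole point is the reverse inequality $\um^\ga\le v$. This is a state constraint comparison between the subsolution $\um^\ga$ and the state constraint supersolution $v$, which I would obtain by doubling the variables: with $\Phi(s,t)=\um^\ga(s)-v(t)-(s-t)^2/(2\eta)$ and $(s_\eta,t_\eta)$ a maximizer, the penalty $t\mapsto -(s_\eta-t)^2/(2\eta)$ is a subtangent to $v$ at $t_\eta$ (constrained when $t_\eta\in\{0,1\}$), so the state constraint conditions at $0$ and $1$ guarantee $\la\,v(t_\eta)+H_\ga(t_\eta,p_\eta)\ge 0$ with $p_\eta=(s_\eta-t_\eta)/\eta$, irrespective of where $t_\eta$ sits. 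Subtracting the subsolution inequality at $s_\eta$, and using that $p_\eta$ stays bounded by the Lipschitz estimates while $s_\eta-t_\eta\to 0$, the uniform continuity of $H_\ga$ kills the Hamiltonian difference and the discount term leaves $\la\,\max(\um^\ga-v)\le 0$, forcing $\um^\ga\le v$.

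The main obstacle is exactly this boundary behavior in the comparison step. The state constraint hypotheses are designed to control the supersolution variable $t_\eta$ when it reaches $\{0,1\}$, but $\um^\ga$ carries no boundary information as a subsolution, so one must prevent the subsolution variable $s_\eta$ from escaping to an endpoint (or otherwise interpret the subsolution inequality there); plainly invoking Theorem \ref{theo:comprinc} is not available, since its boundary ordering is precisely what we lack. I would resolve this in the standard way, adding to $\Phi$ a small penalization that repels $s_\eta$ from $\partial(0,1)$ and sending the penalization parameter to zero after $\eta$, using coercivity to keep the perturbed slopes under control and checking that this localization does not spoil the sign; it is here that the strict positivity of the discount factor $\la$ does the decisive work, converting the estimate above into the required contradiction and thereby establishing $\um^\ga=v$.
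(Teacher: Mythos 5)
Your first half is sound and is essentially the paper's own argument: sufficiently negative constants are subsolutions, coercivity plus the upper bound $-\frac 1\la \min_{s,p}H_\ga(s,p)$ makes the relevant subsolutions equi--Lipschitz and equibounded, the supremum is then a Lipschitz subsolution, and maximality yields the supersolution property and the state constraint conditions at $0$ and $1$ via the usual bump perturbation. The easy inequality $v\le \um^\ga$ in the uniqueness half (maximality against the competitor $v$) is also correct.

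The gap is in the reverse comparison $\um^\ga\le v$, exactly at the point you yourself call the main obstacle. The fix you propose --- adding to $\Phi$ an additive penalization $-\beta\psi(s)$ blowing up at the endpoints, then sending $\beta\to 0$ after $\eta$ --- does not work. The maximizer inequality only controls $\beta\psi(s_\eta)$ (it stays bounded), never $\beta\psi'(s_\eta)$; in the delicate case where $\max(\um^\ga-v)$ is attained only at an endpoint, $s_\eta$ is dragged to that endpoint as $\beta\to 0$, and for any choice of $\psi$ (say $-\log s$ or $1/s$ near $0$) the quantity $\beta|\psi'(s_\eta)|$ need not vanish. Coercivity, as you note, bounds the perturbed slope $p_\eta+\beta\psi'(s_\eta)$, but boundedness is not smallness: the subsolution inequality is evaluated at momentum $p_\eta+\beta\psi'(s_\eta)$ while the state constraint inequality is evaluated at $p_\eta$, and since $H_\ga$ is merely continuous, with no convexity or monotonicity in $p$, a momentum shift of unit size destroys the cancellation $H_\ga(t_\eta,p_\eta)-H_\ga(s_\eta,p_\eta+\beta\psi'(s_\eta))\to 0$ on which your conclusion rests; iterating the limits in either order does not remove a term sitting inside the Hamiltonian at fixed scale. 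A doubling proof can be repaired, but by Soner's device --- tilting the quadratic coupling to $|s-t+\e\,\nu(t)|^2/\e^2$ with $\nu$ inward--pointing, which structurally forces the subsolution variable into the interior --- not by an additive repelling term. The paper instead avoids doubling altogether and its mechanism is worth knowing: if $w$ is another Lipschitz solution with state constraints and $-\de=\min_{[0,1]}(w-\um^\ga)<0$, then interior minimizers of $w-\um^\ga$ are excluded by testing the supersolution $w$ from below with sup--convolutions of $\um^\ga$ (the strictness needed comes precisely from $\la\de>0$), and endpoint minimizers are excluded by Lemma \ref{techne}: from $p_0=\max\{p\mid p\in\partial \um^\ga(1)\}$ one gets $\la(\um^\ga(1)-\de)+H_\ga(1,p_0)<0$ and hence a linear constrained subtangent to $w$ at $1$ violating the state constraint condition there. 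Either of these two mechanisms closes the boundary case; your proposal as written does not.
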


\smallskip

\begin{Lemma}\label{wellposed}   The function  $u^\ga_\al$ is  a Lipschitz--continuous
 solution to \eqref{HJg} in $(0,1)$ satisfying state constraint boundary conditions at $s=1$.  In addition  $u^\ga_\al$ is equal to $\al$  at
 $s=0$ if and only if $\al \leq \um^\ga(0)$.
\end{Lemma}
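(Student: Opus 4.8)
The plan is to treat $u_\al^\ga$ as a \emph{constrained maximal subsolution} and to run Perron's method, the admissible class being the subsolutions $w$ of \eqref{HJg} with $w(0)\le\al$. I first record a priori bounds. By the definition of $\um^\ga$ every subsolution lies below it, while any sufficiently negative constant $c$ is itself a subsolution with $c\le\al$; hence the admissible class is nonempty and $u_\al^\ga$ is finite with $c\le u_\al^\ga\le\um^\ga$. Being squeezed between bounded functions, $u_\al^\ga$ is bounded, and coercivity {\bf (H2)} then yields a uniform Lipschitz bound for all admissible subsolutions, hence for $u_\al^\ga$. Since a supremum of subsolutions is again a subsolution, $u_\al^\ga$ is a Lipschitz subsolution of \eqref{HJg}, and $u_\al^\ga(0)\le\al$ because every competitor has value at most $\al$ at $0$.

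The heart of the matter is to show that $u_\al^\ga$ is also a supersolution in $(0,1)$ and satisfies the state constraint condition at $1$; I do this by a bump argument whose failure point lies away from $0$. Suppose the relevant inequality failed at some $s_0\in(0,1)$ (through an ordinary $C^1$ subtangent) or at $s_0=1$ (through a constrained subtangent) $\varphi$, i.e. $\la\varphi(s_0)+H_\ga(s_0,\varphi'(s_0))<0$. By {\bf (H1)} this strict inequality persists on a neighbourhood of $s_0$, where $\varphi$ is a classical subsolution; for small $\eta>0$ the lift $\varphi+\eta$ is still a classical subsolution there, the extra term $\la\eta$ being absorbed by the strict gap. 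Replacing $u_\al^\ga$ by $\max(u_\al^\ga,\varphi+\eta)$ on the (compactly contained) set where $\varphi+\eta>u_\al^\ga$ produces an admissible subsolution strictly larger than $u_\al^\ga$ near $s_0$, contradicting maximality; since $s_0$ is bounded away from $0$, the modification leaves the value at $0$ untouched. This is exactly where the discount term makes the argument delicate, and it is the step I expect to be the main obstacle: one cannot simply translate a subsolution upward because $\la\,w$ is sensitive to the value, so the lift must be paid for by a strict gap on a compact set, and subsolutions must be combined by a maximum rather than a minimum, which is why the inserted piece is glued only where it exceeds $u_\al^\ga$. With this, $u_\al^\ga$ is a Lipschitz solution in $(0,1)$ satisfying the state constraint at $1$, which is the first assertion.

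For the equivalence I compare $\al$ with $\um^\ga(0)$. If $\al\ge\um^\ga(0)$, then $\um^\ga$ is itself admissible, so $u_\al^\ga\ge\um^\ga$; together with $u_\al^\ga\le\um^\ga$ this gives $u_\al^\ga=\um^\ga$ and $u_\al^\ga(0)=\um^\ga(0)$, which equals $\al$ precisely when $\al=\um^\ga(0)$. This settles the ``only if'' direction together with the borderline case. Now let $\al<\um^\ga(0)$ and suppose, for contradiction, that $u_\al^\ga(0)<\al$. The slack $\al-u_\al^\ga(0)>0$ lets me rerun the bump argument at the endpoint $0$: were the state constraint condition to fail there, the same lift-and-glue construction localised near $0$, raising the value at $0$ by at most $\al-u_\al^\ga(0)$, would remain admissible and strictly exceed $u_\al^\ga(0)$, a contradiction. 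Hence $u_\al^\ga$ satisfies the state constraint at both endpoints and is a Lipschitz solution in $(0,1)$, so by the characterisation in Lemma \ref{chara} it coincides with $\um^\ga$; but then $u_\al^\ga(0)=\um^\ga(0)>\al$, contradicting $u_\al^\ga(0)<\al$. Therefore $u_\al^\ga(0)=\al$ whenever $\al\le\um^\ga(0)$, which completes the proof.
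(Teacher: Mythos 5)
Your construction is sound and, for the first assertion, is essentially the paper's: treat $u_\al^\ga$ as a maximal constrained subsolution and extract the supersolution property and the state constraint at $s=1$ from maximality via a Perron bump (the paper compresses this into ``straightforward consequences of the maximality property''; your lift-and-glue, with the discount term $\la\eta$ absorbed by the strict gap, is the right expansion of that sentence, modulo the standard sharpening of the subtangent to strict touching, e.g.\ subtracting $|s-s_0|^2$, so that the set where $\varphi+\eta>u_\al^\ga$ really is compactly contained). One step, however, fails as literally stated: coercivity does \emph{not} give a uniform Lipschitz bound for \emph{all} admissible subsolutions. A subsolution only satisfies $H_\ga(s,w')\le-\la w$ a.e., so the coercivity bound on $|w'|$ deteriorates as $w\to-\infty$, and the admissible class contains arbitrarily steep members: for $H_\ga(s,p)=|p|-1$, every $w(s)=-A-Bs$ is an admissible subsolution once $A\ge\max\{(B-1)/\la,\,-\al\}$, with $B$ arbitrary. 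Your squeeze $c\le u_\al^\ga\le\um^\ga$ bounds the supremum, not the individual competitors, and a supremum of Lipschitz functions with unbounded Lipschitz constants need not be Lipschitz. The repair is exactly what the paper's proof does: since $c\le\al$ and the maximum of two subsolutions is a subsolution, replacing each competitor $w$ by $\max(w,c)$ leaves the supremum unchanged, and the restricted family is equibounded between $c$ and $\um^\ga$, hence equi-Lipschitz by coercivity. With that correction your first part is complete.

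For the equivalence you take a genuinely different, and more roundabout, route in the ``if'' direction. The paper exploits the structural fact that for $\la>0$ a downward translation of a subsolution is again a subsolution, so for $\al\le\um^\ga(0)$ the function $\um^\ga+\al-\um^\ga(0)$ is an admissible competitor attaining exactly $\al$ at $s=0$, and maximality gives $u_\al^\ga(0)\ge\al$ in one line. You instead assume $u_\al^\ga(0)<\al$, use the slack to run the endpoint bump at $s=0$ without violating the constraint $w(0)\le\al$, deduce that the state constraint must hold at $s=0$ as well, and then invoke the characterization of Lemma \ref{chara} to force $u_\al^\ga=\um^\ga$, contradicting $\al<\um^\ga(0)$. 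This is correct (and your treatment of the ``only if'' direction and of the borderline case through $u_\al^\ga=\um^\ga$ for $\al\ge\um^\ga(0)$ is fine), but it spends a second bump argument plus the full force of Lemma \ref{chara} where the translation trick costs nothing; moreover the translation argument yields the stronger pointwise bound $u_\al^\ga\ge\um^\ga+\al-\um^\ga(0)$ on all of $[0,1]$, and this kind of comparison-by-translation is the recurring tool in the sequel (it reappears, for instance, in the proof of Proposition \ref{varalfa}), so it is worth making it the primary mechanism rather than circumventing it.
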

The proof of the two above lemmata  is in  Appendix \ref{basicbis}.

\smallskip

\begin{Corollary}\label{corcharabis} The identity $u^\ga_\al \equiv  \um^\ga$ holds true in $[0,1]$ if and only if $\al \geq \um^\ga(0)$.
\end{Corollary}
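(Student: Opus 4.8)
The plan is to derive the equivalence directly from Lemma~\ref{chara} and Lemma~\ref{wellposed}, splitting into the two implications. I will repeatedly use the elementary observation that, by \eqref{defual}, the supremum defining $u^\ga_\al$ is taken over a subfamily of the subsolutions entering the definition of $\um^\ga$; hence $u^\ga_\al \le \um^\ga$ on $[0,1]$ \emph{regardless} of the value of $\al$. This inequality serves as one half of the argument in the ``if'' direction and plays no role in the ``only if'' direction.

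For the implication $\Leftarrow$, I would assume $\al \ge \um^\ga(0)$. By Lemma~\ref{chara} the function $\um^\ga$ is a Lipschitz solution, in particular a subsolution, of \eqref{HJg}, and the hypothesis gives $\um^\ga(0) \le \al$. Thus $\um^\ga$ is itself an admissible competitor in the family defining $u^\ga_\al$, which forces $u^\ga_\al \ge \um^\ga$. Together with the reverse inequality noted above, this yields $u^\ga_\al \equiv \um^\ga$ on $[0,1]$.

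For the implication $\Rightarrow$ I would argue by contraposition, assuming $\al < \um^\ga(0)$ and producing a point of disagreement. Since a fortiori $\al \le \um^\ga(0)$, the second assertion of Lemma~\ref{wellposed} applies and gives $u^\ga_\al(0) = \al$. Hence $u^\ga_\al(0) = \al < \um^\ga(0)$, so the two functions differ already at $s=0$ and the identity $u^\ga_\al \equiv \um^\ga$ cannot hold.

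I do not expect a substantive obstacle here: the corollary is essentially a bookkeeping consequence of the two lemmata. The only steps requiring care are verifying that $\um^\ga$ genuinely belongs to the competing class in the $\Leftarrow$ direction---which is precisely where both the hypothesis $\al \ge \um^\ga(0)$ and the fact that $\um^\ga$ is a subsolution (Lemma~\ref{chara}) are needed---and invoking the correct direction of the equivalence in Lemma~\ref{wellposed} for the contrapositive.
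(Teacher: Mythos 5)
Your proof is correct and takes precisely the route the paper intends: Corollary \ref{corcharabis} is stated there without a separate proof, as a direct consequence of Lemmas \ref{chara} and \ref{wellposed}, and your two implications (the subfamily inequality $u^\ga_\al \le \um^\ga$ together with admissibility of $\um^\ga$ as a competitor when $\al \ge \um^\ga(0)$ for one direction, and the boundary characterization $u^\ga_\al(0)=\al \iff \al \le \um^\ga(0)$ from Lemma \ref{wellposed} for the contrapositive of the other) are exactly that deduction. There are no gaps.
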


\smallskip
We deduce from the previous results  the following  characterization of $u^\ga_\al$:

\begin{Corollary}\label{charabis} The function
$u^\ga_\al$, for $\al \leq \um^\ga(0)$,  is the unique solution to \eqref{HJg} satisfying the Dirichlet boundary condition $u^\ga_\al(0)=\al$
and the state constraint
boundary condition at $s=1$.
\end{Corollary}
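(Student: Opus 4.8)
The plan is to separate the statement into an existence part and a uniqueness part, the latter being the substantial one. Existence is immediate: since $\al \le \um^\ga(0)$, Lemma \ref{wellposed} already guarantees that $u^\ga_\al$ is a Lipschitz solution of \eqref{HJg} in $(0,1)$ satisfying the state constraint boundary condition at $s=1$ and taking the value $\al$ at $s=0$. Hence $u^\ga_\al$ is a solution of exactly the required type, and it only remains to rule out any other.

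For uniqueness I would fix an arbitrary solution $v$ of \eqref{HJg} with $v(0)=\al$ satisfying the state constraint condition at $s=1$, and establish the two inequalities $v \le u^\ga_\al$ and $u^\ga_\al \le v$ on $[0,1]$. The first is definitional: by \eqref{defual}, $v$ is in particular a subsolution of \eqref{HJg} with $v(0)=\al$, so it competes in the supremum defining $u^\ga_\al$ and therefore $v \le u^\ga_\al$ everywhere. Observe that this already yields $v(1)\le u^\ga_\al(1)$, which is the opposite of the boundary ordering that Theorem \ref{theo:comprinc} would require at $s=1$; so the reverse inequality cannot follow from a plain application of the comparison principle, and the state constraint at $s=1$ must be exploited in an essential way.

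The reverse inequality $u^\ga_\al \le v$ is thus the crux. I would regard $u^\ga_\al$ as a subsolution and $v$ as a supersolution of \eqref{HJg} that additionally satisfies the state constraint condition at $s=1$, with $u^\ga_\al(0)=\al=v(0)$, and run a Crandall--Lions doubling of variables on $\Phi_\e(s,t)=u^\ga_\al(s)-v(t)-\tfrac{(s-t)^2}{2\e}$. By coercivity \textbf{(H2)} subsolutions of \eqref{HJg} are equi--Lipschitz, so the dual variable $p_\e=(s_\e-t_\e)/\e$ stays bounded and the contact points $s_\e,t_\e$ converge to a common maximizer $\bar s$ of $u^\ga_\al-v$; if $\bar s$ is interior the two viscosity inequalities together with continuity \textbf{(H1)} exclude a positive maximum, while $\bar s=0$ is ruled out by $u^\ga_\al(0)=v(0)$. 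The one delicate case is $\bar s=1$, and this is precisely where the state constraint enters: when the supersolution's contact point $t_\e$ reaches $1$, the relevant test function is a constrained subtangent to $v$ at $1$, and the state constraint condition supplies the missing lower viscosity inequality. The main obstacle is to keep the subsolution's contact point $s_\e$ from sticking at the endpoint $1$ — where $u^\ga_\al$ carries no information — which, with no convexity available, I would handle by a small coercivity--based perturbation pushing $s_\e$ into the open interval $(0,1)$ and then letting the perturbation vanish. Combining $v \le u^\ga_\al$ with $u^\ga_\al \le v$ gives $v \equiv u^\ga_\al$, which is the asserted uniqueness.
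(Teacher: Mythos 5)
Your proposal is correct, and its skeleton (existence from Lemma \ref{wellposed}; the inequality $v \le u^\ga_\al$ from the maximality inherent in \eqref{defual}; all the real work concentrated in $u^\ga_\al \le v$) matches the paper's; but for the crux inequality you take a genuinely different route. The paper never doubles variables here: the inequality you prove by Crandall--Lions--Soner machinery is exactly its Corollary \ref{charatris}, obtained by ``slightly adapting'' the proof of Lemma \ref{chara} in Appendix \ref{basicbis}. That argument looks directly at a putative negative minimum $-\de$ of $v-u^\ga_\al$: interior minimizers are excluded by using sup--convolutions of the Lipschitz subsolution $u^\ga_\al$ as test functions from below for the supersolution $v$; the endpoint $s=0$ is excluded by the common Dirichlet datum; and the endpoint $s=1$ is excluded via Lemma \ref{techne} (Clarke's mean value theorem): since $u^\ga_\al$ satisfies the equation at its differentiability points, one gets $\la\,(u^\ga_\al(1)-\de)+H_\ga(1,p_0)<0$ for $p_0=\max\{p \mid p\in\partial u^\ga_\al(1)\}$, then picks $q>p_0$ preserving the strict inequality and produces an \emph{affine} constrained subtangent to $v$ at $1$ that violates the state constraint condition. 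Your route instead is the classical state--constraint comparison: doubling of variables, with the supersolution's contact point at $t=1$ handled by the constrained-subtangent formulation (as you correctly note), and the subsolution's contact point pushed off $s=1$ by a shifted penalization --- your ``coercivity--based perturbation'' is precisely Soner's trick, and it works here because the shift can be calibrated to the Lipschitz constant $L$ of $u^\ga_\al$ (a shift of order $\de_\e>2L\e$ in the quadratic term forces $s_\e<1$ while keeping $p_\e$ bounded). What each approach buys: yours is portable, standard PDE technology that needs only ``$u^\ga_\al$ Lipschitz subsolution, $v$ continuous supersolution with state constraint at $1$, equal data at $0$'' --- so it actually proves the stronger Corollary \ref{charatris}, not just uniqueness among solutions --- at the price of carrying out the shift argument carefully; the paper's route avoids doubling altogether (consistent with its declared aim of bypassing Crandall--Lions doubling), reuses machinery already set up for Lemma \ref{chara}, and exploits the specific structure of $u^\ga_\al$ (maximality, Lipschitz regularity, a.e.\ satisfaction of the equation). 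Your preliminary observation that Theorem \ref{theo:comprinc} cannot be applied directly, because the boundary ordering at $s=1$ goes the wrong way, is also accurate and is exactly why the state constraint condition must carry the argument.
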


By slightly adapting the proof of Lemma \ref{chara}, we also have:

\begin{Corollary}\label{charatris}  Let $w$ be a supersolution of \eqref{HJg} with $w(0) = \al$ satisfying  the state constraint
boundary condition at $s=1$, then $w \geq u^\ga_\al$ in $[0,1]$.
\end{Corollary}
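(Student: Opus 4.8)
The plan is to derive the statement from a one--sided (state constraint) version of the comparison principle, applied to $u^\ga_\al$ and to the given supersolution $w$. First I would record two facts. By Lemma \ref{wellposed}, $u^\ga_\al$ is a Lipschitz solution of \eqref{HJg} in $(0,1)$, hence in particular a subsolution; and by its very definition \eqref{defual} one has $u^\ga_\al(0)=\sup\{u(0)\}\le\al=w(0)$. It therefore suffices to prove the following modification of Theorem \ref{theo:comprinc}, which is the ``slight adaptation'' of the proof of Lemma \ref{chara}: if $u$ is a subsolution of \eqref{HJg} in $(0,1)$ and $w$ is a supersolution satisfying the state constraint boundary condition at $s=1$, with $u(0)\le w(0)$, then $u\le w$ in $[0,1]$. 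Applying this with $u=u^\ga_\al$ gives $u^\ga_\al\le w$, which is the assertion. The conceptual point is that the state constraint at $s=1$ replaces the missing ordering of boundary values $u(1)\le w(1)$ that Theorem \ref{theo:comprinc} would otherwise require.

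To prove this one--sided comparison I would run the Crandall--Lions doubling of variables, modified only at the constrained endpoint. Suppose for contradiction that $M:=\max_{[0,1]}(u-w)>0$; the maximum is attained since $u-w$ is upper semicontinuous on the compact $[0,1]$. For $\e>0$ maximize
\[
\Phi_\e(s,t)=u(s)-w(t)-\frac{(s-t)^2}{2\e}
\]
over $[0,1]^2$ at a point $(s_\e,t_\e)$. By the standard penalization lemma, $\tfrac{(s_\e-t_\e)^2}{2\e}\to0$, $\Phi_\e(s_\e,t_\e)\to M$, and $(s_\e,t_\e)\to(\bar s,\bar s)$ with $\bar s$ a maximizer of $u-w$. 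Since $M>0$ and $u(0)\le w(0)$ we must have $\bar s\in(0,1]$, so for $\e$ small $s_\e,t_\e$ are bounded away from $0$. Writing $p_\e=(s_\e-t_\e)/\e$, the map $t\mapsto w(t)+\tfrac{(s_\e-t)^2}{2\e}$ attains a minimum at $t_\e$, so $p_\e$ is the slope of a $C^1$ subtangent to $w$ at $t_\e$. If $t_\e\in(0,1)$, the supersolution inequality gives $\la w(t_\e)+H_\ga(t_\e,p_\e)\ge0$; if instead $t_\e=1$, that very subtangent is a \emph{constrained} subtangent to $w$ at $1$, and the state constraint boundary condition yields the identical inequality. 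This is the single place where the argument departs from Theorem \ref{theo:comprinc}.

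The main obstacle is the subsolution side: the inequality $\la u(s_\e)+H_\ga(s_\e,p_\e)\le0$ is available only for $s_\e\in(0,1)$, and nothing a priori prevents the maximizer from sticking to the free endpoint $s_\e=1$. I would remove this by the usual inward--shift device, exploiting that $-1$ is the interior direction at $s=1$: replace the penalization by $(s-t+\be)^2/2\e$ for a small $\be>0$. Since by coercivity {\bf (H2)} the subsolution $u$ is $L$--Lipschitz, any such $p_\e$ satisfies $|p_\e|\le L$, so at the shifted maximizer $s_\e-t_\e=\e p_\e-\be<0$ as soon as $\be>\e L$; hence $s_\e<t_\e\le1$, which keeps $s_\e$ in $(0,1)$. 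The error introduced by $\be$ is reabsorbed by letting first $\e\to0$ and then $\be\to0$. Once both test points are admissible and carry the common momentum $p_\e$, subtracting the two inequalities gives
\[
\la\big(u(s_\e)-w(t_\e)\big)\le H_\ga(t_\e,p_\e)-H_\ga(s_\e,p_\e).
\]
Because $s_\e-t_\e\to0$ while $p_\e$ stays bounded, the right--hand side tends to $0$ by the uniform continuity of $H_\ga$ on compacta {\bf (H1)}; since $u(s_\e)-w(t_\e)\to M$ (after the double limit) and $\la>0$, this forces $\la M\le0$, contradicting $M>0$. Hence $u\le w$ in $[0,1]$, and with $u=u^\ga_\al$ the corollary follows.
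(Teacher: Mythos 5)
Your proof is correct, but it takes a genuinely different route from the paper's. The paper obtains Corollary \ref{charatris} ``by slightly adapting the proof of Lemma \ref{chara}'' (Appendix \ref{basicbis}): assuming $\min_{[0,1]}(w-u^\ga_\al)=-\de<0$, one excludes $s=0$ as a minimizer by the Dirichlet condition ($w(0)=\al\ge u^\ga_\al(0)$), excludes interior minimizers by testing $w$ from below with sup--convolutions of the Lipschitz subsolution $u^\ga_\al-\de$, and excludes $s=1$ via the Clarke--gradient argument: since $u^\ga_\al$ solves the equation at its differentiability points, $p_0=\max\{p \mid p\in\partial u^\ga_\al(1)\}$ satisfies $\la(u^\ga_\al(1)-\de)+H_\ga(1,p_0)<0$, so Lemma \ref{techne} yields a linear constrained subtangent $s\mapsto u^\ga_\al(1)-\de+q\,(s-1)$, $q>p_0$, to $u^\ga_\al-\de$ and hence to $w$ at $1$, contradicting the state constraint. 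You instead prove a self-contained one--sided comparison theorem (subsolution versus supersolution with state constraint at $1$ and ordered values at $0$) by Crandall--Lions doubling, using the inward shift $\be$ to keep the subsolution point off the free endpoint and observing that the penalization subtangent at $t_\e=1$ is a constrained subtangent, so the state constraint supplies the missing supersolution inequality there. Your route does not need the a.e.\ equation for $u^\ga_\al$ nor Lemma \ref{techne}, and it gives comparison against an arbitrary continuous subsolution, not just the maximal one; the paper's route recycles machinery it has already built and stays faithful to its program of avoiding doubling of variables. One sentence of yours should be tightened: with the shifted penalization one has $|s_\e-t_\e|\to\be$ (not $0$) as $\e\to0$ for fixed $\be$, so $H_\ga(t_\e,p_\e)-H_\ga(s_\e,p_\e)$ is only of order $\omega(\be)$ after the limit in $\e$, and the contradiction $\la M\le 0$ emerges, as you do indicate elsewhere, only in the double limit $\e\to0$ followed by $\be\to0$.
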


\medskip
We introduce the function $u_\al^{-\ga}$ defined as $u^\ga_\al$, but
with the Hamiltonian $H_\ga$ in equation \eqref{HJg} replaced by $H_{- \ga}$.
This function is the analogue of $u^\ga_\al$ on $-\ga$ in the sense that it is the maximal subsolution to $\la \, u + H_{- \ga}(s,u') =0$ taking a
value less than or equal to $\al$ at $0$.

\smallskip

\begin{Remark}\label{remfork} It is apparent that $w(s)$ is subsolution to
\eqref{HJg} with $H_{-\ga}$ in place of $H_\ga$ if and only $s \mapsto
w(1-s)$ has the same property for the original equations. This
 shows that
$u_\al^{-\ga}(1-s)$ is the maximal subsolution to
\eqref{HJg} taking value $\leq \al$ at $s=1$.
In addition, $s \mapsto \um^\ga(1-s)$ is the maximal subsolution to
\eqref{HJg} with $H_{-\ga}$ in place of $H_\ga$.
\end{Remark}

\smallskip

The next result is about  Dirichlet boundary problems. It will  be crucially
used  in the passage from the local problem on the arcs to
the global problem on the network.

\smallskip

\begin{Proposition}\label{fork}
There exists an unique solution $u$ of the equation \eqref{HJg} with
$u(0)=\al$, $u(1)= \be$ if and only if
\begin{equation}\label{fork2}
 \al \leq  u^{-\ga}_\be(1), \;\;  \be  \leq u^\ga_\al(1).
\end{equation}
\end{Proposition}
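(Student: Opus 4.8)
The plan is to get uniqueness for free from the comparison principle, and to obtain existence by Perron's method, the only delicate point being the attainment of the boundary data, for which the two inequalities in \eqref{fork2} turn out to be exactly what is needed.

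Uniqueness is immediate: if $u_1,u_2$ both solve \eqref{HJg} with $u_i(0)=\al$, $u_i(1)=\be$, then $u_1$ is a subsolution and $u_2$ a supersolution coinciding on $\{0,1\}$, so Theorem \ref{theo:comprinc} gives $u_1\le u_2$; exchanging the roles yields $u_1=u_2$. For the necessity of \eqref{fork2}, write $v(s):=u^{-\ga}_\be(1-s)$, which by Remark \ref{remfork} is the maximal subsolution of \eqref{HJg} with value $\le\be$ at $1$. A solution $u$ with the prescribed data is in particular a subsolution with $u(0)\le\al$, hence $u\le u^\ga_\al$ by \eqref{defual} and so $\be=u(1)\le u^\ga_\al(1)$; it is also a subsolution with $u(1)\le\be$, hence $u\le v$ and $\al=u(0)\le v(0)=u^{-\ga}_\be(1)$. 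This is precisely \eqref{fork2}.

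For sufficiency I would take $u(s)$ to be the supremum of $w(s)$ over all subsolutions $w$ of \eqref{HJg} with $w(0)\le\al$ and $w(1)\le\be$. By coercivity {\bf (H2)} the subsolutions confined between the explicit competitor constructed below and $\min(u^\ga_\al,v)$ are uniformly Lipschitz, so $u$ is a well-defined Lipschitz function on $[0,1]$; Perron's method then shows that $u$ is a subsolution and, via the usual bump construction, a supersolution in $(0,1)$, hence a solution of \eqref{HJg} there. Since every competitor lies below both $u^\ga_\al$ and $v$, we get $u\le\min(u^\ga_\al,v)$, and therefore $u(0)\le\al$ and $u(1)\le\be$.

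The hard part is the two reverse boundary inequalities, and here one must resist the temptation to declare $\min(u^\ga_\al,v)$ to be the solution: in the absence of convexity this minimum is in general \emph{not} a subsolution, since at the crossing of the two solutions it exhibits a concave corner whose intermediate slopes may violate the subsolution inequality, so the Perron envelope genuinely cuts that corner and sits strictly below the minimum there. Instead I would exploit that the maximum of subsolutions is always a subsolution. Condition \eqref{fork2} guarantees $c:=u^\ga_\al(1)-\be\ge 0$ and $d:=u^{-\ga}_\be(1)-\al=v(0)-\al\ge 0$, so the downward shifts $\phi:=u^\ga_\al-c$ and $\psi:=v-d$ are subsolutions satisfying $\phi(0)\le\al$, $\phi(1)=\be$ and $\psi(0)=\al$, $\psi(1)\le\be$. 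Then $\max(u,\phi)$ is an admissible competitor taking the value $\be$ at $1$, whence $u(1)\ge\phi(1)=\be$; symmetrically $\max(u,\psi)$ forces $u(0)\ge\psi(0)=\al$. Combined with the upper bounds of the previous paragraph, this gives $u(0)=\al$ and $u(1)=\be$, so that $u$ is the desired solution and the proof closes.
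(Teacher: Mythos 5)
Your proof is correct and follows essentially the same route as the paper: uniqueness via Theorem \ref{theo:comprinc}, necessity via the maximality of $u^\ga_\al$ and Remark \ref{remfork}, and existence by Perron's method built from exactly the same four functions --- your $\phi$ and $\psi$ are the two terms of the paper's subsolution barrier $\un u$, and your upper bound $\min(u^\ga_\al,v)$ is the paper's supersolution barrier $\ov u$. The only cosmetic difference is that you obtain the upper bound on the Perron envelope from the maximality of $u^\ga_\al$ and $v$, whereas the paper notes that the minimum of these two supersolutions is itself a supersolution and runs Perron between the two explicit barriers.
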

The proof is in  Appendix \ref{basicbis}.

\smallskip

In the next result we show continuity of $u^\ga_\al(1)$ with respect to
$\al$ plus two  monotonicity properties we will repeatedly exploit
in what follows. We stress in particular  that the strict monotonicity in item {\bf ii)} will play a crucial role in the whole paper.

\begin{Proposition}\label{varalfa}  \hfill
\begin{itemize}
   \item[{\bf i)}]  $\al \mapsto u^\ga_\al(1)$ is  Lipschitz continuous and nondecreasing;
    \item[{\bf ii)}]  $\al \mapsto u^\ga_\al(1)- \al$ is strictly
    decreasing;
    \item[{\bf iii)}]$\lim_{\al \to - \infty} u^\ga_\al(1)-\al = +\infty$ , \;\;\; $ \lim_{\al \to + \infty} u^\ga_\al(1)-\al
    = - \infty $.
\end{itemize}
\end{Proposition}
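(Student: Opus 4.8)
The plan is to exploit two facts: for every $\al$ the function $u^\ga_\al$ is, by \eqref{defual}, the \emph{maximal} subsolution of \eqref{HJg} among those with datum $\le\al$ at $0$; and, because $\la>0$, a downward vertical shift of a solution is a \emph{strict} subsolution. Throughout I work in the Dirichlet regime $\al\le\um^\ga(0)$, where $u^\ga_\al(0)=\al$ by Lemma \ref{wellposed}. The complementary range is immediate: there $u^\ga_\al\equiv\um^\ga$ by Corollary \ref{corcharabis}, so $\al\mapsto u^\ga_\al(1)$ is constant and $\al\mapsto u^\ga_\al(1)-\al$ is affine with slope $-1$.

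For i), monotonicity is read off \eqref{defual}, since enlarging $\al$ enlarges the admissible class of competitors. For the Lipschitz bound fix $\al_1<\al_2$ and set $c=\al_2-\al_1$. As $u^\ga_{\al_2}$ solves \eqref{HJg}, the shifted function $z:=u^\ga_{\al_2}-c$ satisfies $\la z+H_\ga(s,z')\le-\la c<0$, hence is a subsolution, and $z(0)\le\al_2-c=\al_1$. Maximality of $u^\ga_{\al_1}$ gives $z\le u^\ga_{\al_1}$, that is
\[ u^\ga_{\al_2}(s)\le u^\ga_{\al_1}(s)+c \qquad\text{for all }s. \]
Evaluating at $s=1$ and combining with monotonicity yields $0\le u^\ga_{\al_2}(1)-u^\ga_{\al_1}(1)\le\al_2-\al_1$, which is item i).

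The substantive point is the strict inequality in ii), and this is where I expect the real work. I would argue by contradiction, assuming $u^\ga_{\al_2}(1)=u^\ga_{\al_1}(1)+c$ for some $\al_1<\al_2\le\um^\ga(0)$. The strict subsolution $z=u^\ga_{\al_2}-c$ of the previous step then satisfies $z\le u^\ga_{\al_1}$, $z(0)=\al_1$ and, by assumption, $z(1)=u^\ga_{\al_1}(1)$; thus $z$ touches the maximal subsolution from below precisely at the constrained endpoint. The idea is that a strict subsolution still has room to be pushed up near $s=1$ without losing the subsolution property, contradicting maximality. Concretely I would test the perturbations $z_\tau:=z+\tau s$ for small $\tau>0$: at any $C^1$ supertangent $\vr$ to $z_\tau$ at an interior point, $\vr-\tau s$ is supertangent to $z$, its slope is controlled by the Lipschitz constant of $z$ (inherited from $u^\ga_{\al_2}$ via coercivity {\bf (H2)}), and uniform continuity of $H_\ga$ from {\bf (H1)} gives $H_\ga(s,\vr')-H_\ga(s,\vr'-\tau)\le\om(\tau)$ with $\om(\tau)\to0$. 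Hence
\[ \la z_\tau+H_\ga(s,\vr')\le-\la c+\la\tau+\om(\tau)<0 \]
for $\tau$ small, so $z_\tau$ is a subsolution with $z_\tau(0)=\al_1$, while $z_\tau(1)=z(1)+\tau>u^\ga_{\al_1}(1)$, contradicting maximality of $u^\ga_{\al_1}$. The delicate steps are the momentum bound that legitimizes the continuity estimate and the check that the perturbation keeps the datum at $0$ admissible.

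Finally iii) splits along the two regimes. As $\al\to+\infty$ we are in the range $\al\ge\um^\ga(0)$, where $u^\ga_\al(1)=\um^\ga(1)$ is constant, so $u^\ga_\al(1)-\al\to-\infty$. As $\al\to-\infty$ I would use explicit affine competitors: for $P>0$ the function $\vr(s)=\al+Ps$ is a classical, hence viscosity, subsolution provided $\la(\al+P)+\max_{s}H_\ga(s,P)\le0$, i.e. $-\al\ge P+\la^{-1}\max_s H_\ga(s,P)$. By coercivity {\bf (H2)} the right-hand side is finite for each $P$ and tends to $+\infty$ with $P$, so for $-\al$ large one may choose $P=P(\al)\to+\infty$ meeting this bound; since $\vr(0)=\al$ and $\vr(1)=\al+P$, maximality gives $u^\ga_\al(1)-\al\ge P(\al)\to+\infty$. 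This yields both limits and completes the proposition.
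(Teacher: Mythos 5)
Your proposal is correct, and items \textbf{i)} and \textbf{iii)} follow the paper's own route almost verbatim: the downward shift $u^\ga_{\al_2}-c$ being a strict subsolution gives the one-sided Lipschitz bound (the paper's inequality \eqref{varalfa2}), monotonicity is read off maximality, the limit as $\al\to+\infty$ uses Corollary \ref{corcharabis}, and the limit as $\al\to-\infty$ uses exactly the same affine competitors $\al+Ps$. Where you genuinely diverge is the key item \textbf{ii)}, the upgrade from the nonstrict to the strict inequality. The paper, assuming equality in \eqref{varalfa2}, builds a constrained affine subtangent at $s=1$ with slope $q>\max\{p\mid p\in\partial u^\ga_\al(1)\}$ (via the Clarke-gradient machinery of Lemma \ref{techne}, ``arguing as in Lemma \ref{chara}'') and contradicts the state constraint boundary condition satisfied by $u^\ga_\be$ (Corollary \ref{charabis}). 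You instead never touch the boundary condition: you perturb the strict subsolution $z=u^\ga_{\al_2}-c$ to $z_\tau=z+\tau s$, use the momentum bound $|\vr'|\le \mathrm{Lip}(z)+\tau$ at touching points together with uniform continuity of $H_\ga$ on the resulting compact set to keep $z_\tau$ a subsolution for small $\tau$, and contradict maximality of $u^\ga_{\al_1}$ directly since $z_\tau(1)>u^\ga_{\al_1}(1)$ while $z_\tau(0)\le\al_1$. Both mechanisms ultimately trade on the margin $-\la c$ produced by the discount term, but yours is self-contained at the level of definition \eqref{defual}, whereas the paper's is shorter given the lemmas already proved in its Appendix B; yours also needs no case distinction, since only $z(0)\le\al_1$ (not equality) enters, so the restriction $\al_2\le\um^\ga(0)$ and the two-regime split are actually superfluous. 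Two cosmetic points: finiteness of $\max_s H_\ga(s,P)$ in \textbf{iii)} comes from continuity \textbf{(H1)}, not coercivity (coercivity only gives its divergence in $P$, which you do not really need); and in \textbf{ii)} you should enlarge the compact momentum set slightly (say to $|p|\le \mathrm{Lip}(z)+2$) so that both $\vr'$ and $\vr'-\tau$ lie in it before invoking uniform continuity.
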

\begin{proof}

We start from  {\bf ii)}.  We  consider $\be < \al$. The function $s
\mapsto u^\ga_\al + \be -\al$ is a strict subsolution to \eqref{HJg}
taking a value less than or equal to $\be$ at $s=0$. This implies
\begin{equation}\label{varalfa2}
    u^\ga_\al(1)+\be -\al \leq u^\ga_\be(1).
\end{equation}
Arguing as in the proof of Lemma \ref{chara}, we find a
constrained subtangent to  $u^\ga_\al+\be -\al$  at  $s=1$ of the form
\[\varphi(s)= u^\ga_\al(1)+\be -\al + q \,(s-1)\]
for some $q > \max \{p \mid p \in \partial u^\ga_\al(1)\}$  satisfying
\begin{equation}\label{varalfa3}
   \la \, (u^\ga_\al(1) +\be -\al)+ H_\ga(1,q)< 0.
\end{equation}
If equality holds in \eqref{varalfa2} then $\varphi$ is also a
constrained subtangent to $u^\ga_\be$ at $s =1$ and
 \eqref{varalfa3} contradicts $u^\ga_\be$
satisfying state constraint boundary condition at $s =1$, see Corollary \ref{charabis}. Then a
strict inequality must prevail. This shows item {\bf ii)}.

We pass to {\bf i)}. The nondecreasing character of $\al \mapsto
u^\ga_\al(1)$ is a direct consequence of the maximality of $u^\ga_\al$. From this and item { \bf ii)} we derive  for any $\al\ge\be$,
\[0\le u^\ga_\al(1)-u^\ga_\be(1)\le\al-\be,\]
which implies the claimed continuity.

  To prove {\bf iii)},
 we recall  that  by Corollary \ref{corcharabis}
\[u^\ga_\al(1) - \al= \um^\ga(1) - \al  \qquad\hbox{for $\al$ sufficently large
,}\] which gives  the claimed negative divergence as $\al \to +
\infty$. Given any $p_0 > 0$, we consider $\al$ with \[ - \max_{s \in
\R} \{  H_\ga(s,p_0)\}  \geq \la \, ( \al + p_0),\] then $ s \mapsto \al
+s \,p_0$ is subsolution to \eqref{HJg}, and consequently
$u^\ga_\al(1)- \al \geq p_0$. This implies the claimed positive
divergence as $\al \to - \infty$.
\end{proof}

\smallskip
We  derive:

\begin{Corollary}\label{corvaralfa}
There exists one and only one $\al$ such that $u^\ga_\al(1)=\al $, and it  satisfies $\al \geq - \frac 1\la \, \max\limits_s H_\ga(s,0)$.
\end{Corollary}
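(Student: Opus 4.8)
The plan is to reduce everything to the one--variable auxiliary function $g(\al) := u^\ga_\al(1) - \al$ and to read off the conclusion from the properties already collected in Proposition \ref{varalfa}. By item {\bf i)} the map $\al \mapsto u^\ga_\al(1)$ is (Lipschitz) continuous, hence $g$ is continuous; by item {\bf ii)} $g$ is strictly decreasing; and by item {\bf iii)} we have $g(\al) \to +\infty$ as $\al \to -\infty$ and $g(\al) \to -\infty$ as $\al \to +\infty$. A continuous, strictly decreasing function with these limits is a bijection of $\R$ onto $\R$, so it attains the value $0$ exactly once. This produces the unique $\al$ with $g(\al)=0$, i.e. $u^\ga_\al(1)=\al$, settling both existence and uniqueness at once.

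For the lower bound I would exhibit a single explicit subsolution. Set $c_0 := -\frac1\la \max_s H_\ga(s,0)$. The constant function $w\equiv c_0$ has $w'\equiv 0$, so for every $s$ one computes $\la\, w + H_\ga(s,w') = \la\, c_0 + H_\ga(s,0) = H_\ga(s,0) - \max_{s'} H_\ga(s',0) \le 0$; thus $w$ is a classical, hence viscosity, subsolution of \eqref{HJg} with $w(0)=c_0\le c_0$. By the very definition of $u^\ga_{c_0}$ as the supremum of subsolutions taking a value $\le c_0$ at $s=0$, see \eqref{defual}, we obtain $u^\ga_{c_0}(1)\ge w(1)=c_0$, that is $g(c_0)\ge 0$.

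Finally I would combine this inequality with monotonicity. Since $g$ is strictly decreasing and vanishes at the unique root $\al$ found above, the relation $g(c_0)\ge 0 = g(\al)$ forces $c_0\le \al$, which is precisely the asserted estimate $\al\ge -\frac1\la \max_s H_\ga(s,0)$.

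There is essentially no deep obstacle here, since the analytic content is carried entirely by Proposition \ref{varalfa}. The only points deserving a little care are: first, checking that $c_0$ is genuinely admissible in the supremum defining $u^\ga_{c_0}$, which uses $\la>0$ so that dividing the subsolution inequality by $\la$ preserves its sign; and second, using the \emph{strict} (rather than merely weak) monotonicity of $g$ to convert $g(c_0)\ge g(\al)$ into the comparison $c_0\le \al$ of the arguments.
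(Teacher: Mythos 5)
Your proposal is correct and follows essentially the same route as the paper: both arguments rest on the constant function $-\frac1\la\max_s H_\ga(s,0)$ being a subsolution (hence $u^\ga_{c_0}(1)\ge c_0$) combined with the continuity, strict monotonicity and divergence properties of $\al\mapsto u^\ga_\al(1)-\al$ from Proposition \ref{varalfa}. The only difference is organizational: you derive existence and uniqueness first from the bijectivity of $g$ and then use $g(c_0)\ge 0$ for the lower bound, while the paper folds all three conclusions into one appeal to items \textbf{ii)} and \textbf{iii)}.
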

\begin{proof} If $\al = - \frac 1\la \, \max\limits_s H_\ga(s,0)$ then the function constantly equal to $\al$ is subsolution  to \eqref{HJg}.
Consequently  $u^\ga_\al(1) \geq \al$, and  the conclusion follows from Proposition \ref{varalfa} {\bf ii)}, {\bf iii)}.
\end{proof}

\smallskip
\begin{Remark}\label{loop1}  According to Proposition \ref{fork},  the equation \eqref{HJg} admits  a periodic solution in $(0,1)$, namely
attaining the same value at $0$ and $1$, if and only if the boundary value is less than or equal to the $\al$ appearing in the statement of
Corollary \ref{corvaralfa}.

\end{Remark}

\medskip

We introduce the Eikonal equation
\begin{equation}\label{HJeikloc} \tag{{\bf HJ$_\ga$}}
    H_\ga(s,u') = 0 \quad\hbox{$s \in (0,1)$}
\end{equation}
 under the additional assumptions {\bf (H3)}, {\bf (H4)},  see Section   \ref{asymptotic}  for a precise statement of these conditions and a
 quick review  of Eikonal equation on networks. We   define
\[a_\ga= \max_s \, \min_p H_\ga(s,p)\]

\begin{Lemma}\label{previous}  If $0 \geq a_\ga$,  then   there is  a function  $v$   such that $\al + v$  is the maximal subsolution to
\eqref{HJeikloc} taking the value $\al$ at $0$, for any $\al \in \R$. It is in addition  a Lipschitz continuous solution of \eqref{HJeikloc}.
\end{Lemma}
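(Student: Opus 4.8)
The plan is to realize $v$ as the primitive of the maximal admissible slope. Since $0\ge a_\ga=\max_s\min_p H_\ga(s,p)$, for every $s$ the sublevel set $Z(s):=\{p\in\R : H_\ga(s,p)\le 0\}$ is nonempty, and by the coercivity assumption {\bf (H2)} there is an $R$ with $Z(s)\subset[-R,R]$ for all $s$; being closed by {\bf (H1)}, $Z(s)$ is compact. I therefore set
\[ \sigma_\ga(s):=\max Z(s)=\max\{p\in\R : H_\ga(s,p)\le 0\}, \qquad v(s):=\int_0^s \sigma_\ga(t)\,\d t. \]
A short argument shows $\sigma_\ga$ is upper semicontinuous (if $s_n\to s$ and $\sigma_\ga(s_n)\to\ell$, then $H_\ga(s,\ell)\le 0$ by continuity, so $\ell\le\sigma_\ga(s)$), hence Borel measurable and bounded, so $v$ is well defined and Lipschitz continuous with constant $R$. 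Moreover, by maximality of $\sigma_\ga(s)$ in $Z(s)$ together with {\bf (H1)}, one has $H_\ga(s,\sigma_\ga(s))=0$ for every $s$.

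Next I would check that $\al+v$ is a subsolution and is maximal among subsolutions taking value $\al$ at $0$. Since $v$ is Lipschitz, it is a viscosity subsolution of \eqref{HJeikloc} if and only if $H_\ga(s,v'(s))\le 0$ for a.e. $s$; as $v'(s)=\sigma_\ga(s)$ a.e. and $H_\ga(s,\sigma_\ga(s))=0$, this holds. For maximality, if $u$ is any subsolution with $u(0)\le\al$, then at a.e. point $u'(s)\in Z(s)$, whence $u'(s)\le\sigma_\ga(s)$; integrating gives $u(s)-u(0)\le v(s)$, i.e. $u\le\al+v$. Thus $\al+v$ coincides with the maximal subsolution taking value $\al$ at $0$, for every $\al$, with $v$ independent of $\al$.

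It remains to prove that $\al+v$ is also a supersolution, and this is the only delicate point, since for non convex $H_\ga$ the pointwise identity $H_\ga(s,v')=0$ a.e. does not by itself yield the supersolution property. I would argue by contradiction using the maximality just established. Suppose $u:=\al+v$ fails to be a supersolution at some $s_0\in(0,1)$: there is a $C^1$ subtangent $\varphi$ to $u$ at $s_0$ with $H_\ga(s_0,\varphi'(s_0))<0$. After subtracting a term $(s-s_0)^2$, which does not change $\varphi'(s_0)$, I may assume $\varphi<u$ for $0<|s-s_0|\le\de$ and, shrinking $\de$ by {\bf (H1)}, that $H_\ga(s,\varphi'(s))<0$ on $[s_0-\de,s_0+\de]$. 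Setting $m:=\min_{|s-s_0|=\de}(u-\varphi)>0$ and $\psi:=\varphi+m/2$, the function $\psi$ is a classical, hence viscosity, subsolution on that interval, satisfies $\psi<u$ at $s=s_0\pm\de$ and $\psi(s_0)>u(s_0)$. Therefore $w:=\max(u,\psi)$ on $[s_0-\de,s_0+\de]$ and $w:=u$ elsewhere is a subsolution with $w(0)=\al$ and $w(s_0)>u(s_0)$, contradicting maximality. Hence $u$ is a supersolution, so a Lipschitz continuous solution of \eqref{HJeikloc}, which finishes the proof. The main obstacle is precisely this last step: the construction of the competitor $\psi$ and the verification that the glued function $w$ is a global subsolution strictly exceeding $u$.
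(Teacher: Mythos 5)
Your construction of $v$ as the primitive $\int_0^s\sigma_\ga(t)\,dt$ of the maximal admissible slope $\sigma_\ga(s)=\max\{p : H_\ga(s,p)\le 0\}$ is in substance exactly the paper's: the paper proves Lemma \ref{previous} by citing Proposition 5.6 and formula (20) of \cite{SiconolfiSorrentino}, and that formula is precisely this integral. Your preliminary steps (compactness and nonemptiness of the sublevel sets, upper semicontinuity and boundedness of $\sigma_\ga$, the identity $H_\ga(s,\sigma_\ga(s))=0$, the bound $u\le\al+v$ for every subsolution $u$ with $u(0)\le\al$, and the max-gluing argument showing that the maximal subsolution is a supersolution) are all correct. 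The genuine gap is the sentence ``since $v$ is Lipschitz, it is a viscosity subsolution of \eqref{HJeikloc} if and only if $H_\ga(s,v'(s))\le 0$ for a.e.\ $s$''. For merely continuous, coercive Hamiltonians this equivalence is \emph{false}, and your $v$ can fail to be a subsolution at all. Take
\[ H_\ga(s,p)=(p+1)^2\bigl((p-1)^2+\max(s-\tfrac12,0)\bigr), \]
which satisfies {\bf (H1)}, {\bf (H2)}, {\bf (H4)} and $a_\ga=0$. Here $Z(s)=\{-1,1\}$ for $s\le\frac12$ and $Z(s)=\{-1\}$ for $s>\frac12$, so your $v$ is the tent function $\min(s,1-s)$; at $s_0=\tfrac12$ the constant function is a supertangent with slope $0$, and $H_\ga(\tfrac12,0)=1>0$. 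In this example every viscosity subsolution is of the form $u(0)-s$ (any profile using the slope $+1$ would create an interior local maximum, where $0$ lies in the superdifferential and $H_\ga(s,0)\ge 1$), so the maximal subsolution is $\al-s$, not $\al+\min(s,1-s)$. Since your closing remarks indicate you believe the argument covers non-convex $H_\ga$, this is a real failure of the proof, not a cosmetic omission: you identified the supersolution step as the delicate one, but it is the subsolution step where convexity genuinely enters.

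The repair is to invoke the hypotheses under which the paper states the lemma: \eqref{HJeikloc} is introduced under {\bf (H3)}, {\bf (H4)}, and the quasiconvexity in {\bf (H3)} is exactly what is needed (note that the example above violates only {\bf (H3)}). Under {\bf (H3)} each $Z(s)$ is a compact interval $[\sigma^-(s),\sigma^+(s)]$; continuity of $H_\ga$ and coercivity make $\sigma^+=\sigma_\ga$ upper semicontinuous and $\sigma^-$ lower semicontinuous, so for any supertangent $\varphi$ to $v$ at $s_0$ the difference quotients of $v$, being averages of values $\sigma_\ga(t)\in[\sigma^-(s_0)-\e,\sigma^+(s_0)+\e]$ for $t$ near $s_0$, force $\sigma^-(s_0)\le\varphi'(s_0)\le\sigma^+(s_0)$, hence $H_\ga(s_0,\varphi'(s_0))\le 0$ by convexity of $Z(s_0)$. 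With this step either proved as above or cited as the standard a.e.\ criterion for quasiconvex coercive Hamiltonians, the rest of your proof — maximality by integration, the supersolution property via the bump competitor, and the independence of $v$ from $\al$ — goes through as written and reproduces the cited result.
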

\begin{proof}
 See Proposition 5.6 in \cite{SiconolfiSorrentino}. The solution $v + \al$ is given by formula (20) in \cite{SiconolfiSorrentino} with $\al$
 in place of $w(0)$ and $0$ in place of $a$.

\end{proof}

\smallskip

We study the asymptotic behavior  of solutions to \eqref{HJg} as $\la \to 0$.  Given a  positive  infinitesimal sequence $\la_n$,  we indicate
by $\um^{\la_n}$, $u_\al^{\la_n}$ the maximal solution to \eqref{HJg}, with $\la_n$ in place of $\la$,  and the maximal solution among those
taking the value $\al$ at $s=0$, respectively.  The function $v$ is defined as in the statement of Lemma \ref{previous}.  The proof of the
following  result is in Appendix \ref{basicbis}.

\begin{Lemma}\label{asy}  Let  $\al_n$ be a sequence converging to  some $\al \in \R$. If    $\um^{\la_n}(0)\ge\al_n$ for $n$ sufficiently
large, then   $u_n = u_{\al_n}^{\la_n}$  uniformly converges in  $[0,1]$      to $\al + v$.
\end{Lemma}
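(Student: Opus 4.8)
The plan is to combine uniform a priori estimates, the Ascoli--Arzelà theorem, and the stability of viscosity sub/supersolutions, identifying the limit through the two extremal characterizations of $\al+v$. First I would record that the hypothesis $\um^{\la_n}(0)\ge\al_n$ together with Lemma \ref{wellposed} forces $u_n(0)=\al_n$, so that each $u_n$ is the maximal subsolution of \eqref{HJg} (with $\la_n$ in place of $\la$) taking the value $\al_n$ at $0$, is a Lipschitz solution, and satisfies the state constraint boundary condition at $s=1$. Recall also that $\al+v$ is the maximal subsolution of \eqref{HJeikloc} with value $\al$ at $0$ and is itself a Lipschitz solution (Lemma \ref{previous}).

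Next I would produce $n$--independent bounds. Since $\al+v$ solves \eqref{HJeikloc}, the functions $\al+v+C$ and $\al+v-C$ are respectively a supersolution and a subsolution of \eqref{HJg} for $\la_n$ as soon as the fixed constant $C$ is large enough to make them nonnegative, resp.\ nonpositive (the discount term then has the right sign, while the Hamiltonian term keeps the sign inherited from the Eikonal equation). Checking that $\al+v+C$ still verifies the state constraint at $s=1$, Corollary \ref{charatris} (with the obvious monotonicity in the boundary datum) gives $u_n\le\al+v+C$, while maximality of $u_n$ gives $u_n\ge\al+v-C$, for $n$ large. Hence $\|u_n\|_\infty$ is bounded uniformly in $n$, so $\la_n u_n\to0$ uniformly; feeding $H_\ga(s,u_n')\le-\la_n u_n$ into the coercivity assumption {\bf (H2)} yields a uniform Lipschitz bound on $u_n$. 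By Ascoli--Arzelà the sequence is precompact in $C([0,1])$, so it suffices to show that every uniform limit $u_*$ of a subsequence equals $\al+v$; note $u_*(0)=\lim\al_n=\al$.

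For the identification I would use stability in both directions. On the subsolution side, since $\la_n u_n\to0$ uniformly, the standard stability of viscosity subsolutions shows that $u_*$ is a subsolution of \eqref{HJeikloc}; being a subsolution with $u_*(0)=\al$, the maximality in Lemma \ref{previous} yields $u_*\le\al+v$. On the supersolution side, $u_n$ is a supersolution of \eqref{HJg} satisfying the state constraint at $s=1$; passing to the limit gives that $u_*$ is a supersolution of \eqref{HJeikloc} which still satisfies the state constraint at $s=1$, with $u_*(0)=\al$. The Eikonal counterpart of Corollary \ref{charatris} --- namely that any such supersolution dominates the distinguished solution, equivalently that $\al+v$ is the minimal supersolution with value $\al$ at $0$ verifying the state constraint at $s=1$ --- then gives $u_*\ge\al+v$. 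Together $u_*=\al+v$, and since the subsequential limit is always the same, the whole sequence converges uniformly to $\al+v$.

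The hard part is the reverse inequality $u_*\ge\al+v$, and it is genuinely one--sided: one cannot obtain it by sandwiching $u_n$ from below by subsolutions of \eqref{HJg} converging to $\al+v$, because wherever $\al+v>0$ the positive discount term $\la_n(\al+v)$ cannot be absorbed by $H_\ga(s,v')\le0$, so any subsolution staying $o(1)$--close to $\al+v$ fails $\la_n w+H_\ga(s,w')\le0$. Thus the lower bound must use the supersolution side, and the two delicate points are: (i) the stability of the one--sided state constraint condition at $s=1$ as $\la_n\to0$, and (ii) the minimal--supersolution characterization of $\al+v$ for the Eikonal equation. Point (ii) rests on the Eikonal theory recalled in Section \ref{asymptotic} (and on the convexity assumption there in force); should it not be directly quotable, the alternative is to invoke the explicit representation of $u_n$ and of $\al+v$ --- the discounted analogue of formula (20) of \cite{SiconolfiSorrentino} --- and to pass to the limit in the maximal admissible slope $\sup\{p:H_\ga(s,p)\le-\la_n w\}\to\sup\{p:H_\ga(s,p)\le0\}$, whose convergence as $\la_n\to0$ is where coercivity and convexity enter.
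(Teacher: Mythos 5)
Your first half (using Lemma \ref{wellposed} to get $u_n(0)=\al_n$, the uniform bounds, Ascoli--Arzel\`a, and subsolution stability giving $u_*\le\al+v$) coincides with the paper's argument. The genuine gap is in your lower bound: it rests entirely on the ``Eikonal counterpart of Corollary \ref{charatris}'', namely that $\al+v$ is the \emph{minimal} supersolution of \eqref{HJeikloc} with value $\al$ at $0$ satisfying the state constraint at $s=1$. This is not among the results recalled in the paper or quotable from \cite{SiconolfiSorrentino}, and under the standing assumptions it is \emph{false}: the paper only assumes $0\ge a_\ga$, and in the degenerate case $a_\ga=0$ the characterization breaks down. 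Take $H_\ga(s,p)=|p|$, so $a_\ga=0$ and $v\equiv 0$. Every continuous function is a viscosity supersolution of $|w'|=0$, and the state constraint at $s=1$ is vacuous because $H_\ga(1,q)=|q|\ge 0$ for every constrained subtangent slope $q$; hence $w(s)=\al-s$ satisfies all the hypotheses you use to identify the limit, yet $w<\al+v$ on $(0,1]$. So in this case your identification step yields no lower bound at all. (The paper disposes of this case in one line: when $a_\ga=0$, assumptions \textbf{(H3)}, \textbf{(H4)} force $\{p\mid H_\ga(s,p)\le 0\}$ to be a singleton for each $s$, so any subsolution of \eqref{HJeikloc} equal to $\al$ at $0$ \emph{is} $\al+v$, and the upper bound is automatically an equality.) When $a_\ga<0$ your characterization is true, but it is not quotable and its proof requires exactly the strict-subsolution perturbation argument below, so even there the crux of the proof has been deferred to an unproved lemma.

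Moreover, the route you declare ``genuinely'' impossible is the one the paper actually takes. Let $w$ be a strict subsolution of \eqref{HJeikloc}, $H_\ga(s,w')\le-\de$ with $\de>0$ (it exists since $a_\ga<0$), normalized so that $w\le 0$ and $w(0)\le\al_n$ for large $n$, and let $M$ bound $\al_n+v$. Choose $\mu_k\uparrow 1$ and a subsequence with $\la_{n_k}\le\frac{(1-\mu_k)\de}{\mu_k}\,\frac 1M$, and set $w_k:=\mu_k(\al_{n_k}+v)+(1-\mu_k)w$. By convexity of $H_\ga$ (the assumption in force for Section \ref{asymptotic}),
\[
\la_{n_k}\,w_k+H_\ga(s,w_k')\;\le\;\la_{n_k}\,\mu_k\,(\al_{n_k}+v)-(1-\mu_k)\,\de\;\le\;0,
\]
so $w_k$ is a subsolution of \eqref{HJg} with $\la=\la_{n_k}$ taking a value $\le\al_{n_k}$ at $s=0$; maximality of $u_{n_k}$ gives $u_{n_k}\ge w_k$, whence $\liminf_k u_{n_k}\ge\al+v$, which suffices since the argument applies along a further subsequence of any convergent subsequence. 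Your objection (``any subsolution staying $o(1)$--close to $\al+v$ fails the discounted inequality'') conflates closeness at a \emph{fixed} $\la_n$ with a diagonal construction: the subsolution attached to $\la_{n_k}$ is at distance of order $1-\mu_k$ from $\al+v$, and $\la_{n_k}$ is chosen much smaller than $(1-\mu_k)\de/M$, so the strict-subsolution slack $(1-\mu_k)\de$ absorbs the discount term while the distance still vanishes as $k\to\infty$.
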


\bigskip

\section{Discrete functional equations}\label{statgraph}

\medskip

We introduce  a discrete functional  equation  on $\VV$  suitably related to \eqref{HJ}.   The
relation   is made clear  in Theorem \ref{theo:ponte}, Proposition \ref{pontebis}.
\smallskip

For  $e = \Psi^{-1}(\ga)$, we set

\begin{eqnarray*}
  \rho(\al,e) &=& u_\al^\ga(1) \\
 \un\al(e) &=& \um^\ga(0) \\
  \ov\al(e) &=& \um^\ga(1),
\end{eqnarray*}

\smallskip

We record for later use:

\begin{Proposition}\label{fondadis} For any $e \in \EE$ we have
 \[\ov \al(e) = \un \al(-e)= \rho(\un
 \al(e),e)\]
\end{Proposition}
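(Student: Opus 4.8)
The plan is to unwind the definitions and reduce everything to facts already established about the one--dimensional problem. Recall that for $e = \Psi^{-1}(\ga)$ we have set $\rho(\al,e) = u^\ga_\al(1)$, $\un\al(e) = \um^\ga(0)$, and $\ov\al(e) = \um^\ga(1)$. Note also that $-e$ corresponds to the reversed arc $-\ga$, so by the conventions fixed above $\rho(\cdot,-e)$, $\un\al(-e)$, $\ov\al(-e)$ are the same quantities computed with $H_{-\ga}$ in place of $H_\ga$; in particular $\un\al(-e) = \um^{-\ga}(0)$.

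First I would prove the identity $\ov\al(e) = \un\al(-e)$, i.e. $\um^\ga(1) = \um^{-\ga}(0)$. This should follow directly from Remark \ref{remfork}: there it is observed that $s \mapsto \um^\ga(1-s)$ is the maximal subsolution to the equation with $H_{-\ga}$ in place of $H_\ga$, which is exactly $\um^{-\ga}$. Evaluating this identity $\um^{-\ga}(s) = \um^\ga(1-s)$ at $s=0$ gives $\um^{-\ga}(0) = \um^\ga(1)$, which is the desired equality $\un\al(-e) = \ov\al(e)$.

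Next I would prove $\ov\al(e) = \rho(\un\al(e),e)$, i.e. $\um^\ga(1) = u^\ga_{\um^\ga(0)}(1)$. The natural tool is Corollary \ref{corcharabis}, which states that $u^\ga_\al \equiv \um^\ga$ on $[0,1]$ precisely when $\al \geq \um^\ga(0)$. Taking $\al = \um^\ga(0)$ the hypothesis $\al \geq \um^\ga(0)$ holds with equality, so $u^\ga_{\um^\ga(0)} \equiv \um^\ga$ on all of $[0,1]$, and evaluating at $s=1$ yields $u^\ga_{\um^\ga(0)}(1) = \um^\ga(1)$, which is exactly $\rho(\un\al(e),e) = \ov\al(e)$. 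Combining the two identities completes the proof.

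I expect the only genuine subtlety to be bookkeeping with the $\ga \leftrightarrow -\ga$ correspondence and making sure the reversal convention embedded in $\rho(\cdot,-e)$, $\un\al(-e)$ matches the statements of Remark \ref{remfork} and the definition of $u^{-\ga}_\al$; the analytic content is entirely carried by results already in hand. There is a borderline question of whether Corollary \ref{corcharabis} applies at the endpoint value $\al = \um^\ga(0)$, but since that corollary is phrased with a non--strict inequality $\al \geq \um^\ga(0)$ this case is covered, so no separate argument is needed.
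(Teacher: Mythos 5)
Your proof is correct and follows essentially the same route as the paper, whose own proof is a one--line appeal to the definitions of $\un\al$, $\ov\al$, $\rho$ together with Remark \ref{remfork}. Your use of Corollary \ref{corcharabis} for the second equality is just a packaged form of the same definitional observation (with $\al=\um^\ga(0)$ the constraint $u(0)\le\al$ in \eqref{defual} is vacuous, so $u^\ga_{\um^\ga(0)}\equiv\um^\ga$), so there is no substantive difference.
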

 \begin{proof} The equalities  in the statement directly   come from the definitions of $\un\al$, $\ov \al$, $\rho$
and Remark \ref{remfork}.
\end{proof}

\smallskip

The  discrete functional equation in $\VV$ is defined as follows:

\smallskip
\begin{equation}\label{HJdis} \tag{{\bf DFE$_\la$}}
  U(x) = \min_{e \in -\EE_x} \rho( U(\oo(e)),e).
\end{equation}

We say that $U:\VV \longrightarrow \R$ is a {\it subsolution} (resp. {\it supersolution}) to
\eqref{HJdis} if
\begin{equation}\label{subsolbis}
 U(x) \leq \;\hbox{(resp. $\geq$ )} \;  \min_{e \in -\EE_x} \rho( U(\oo(e)),e). \qquad\hbox{for
any $x \in \VV$.}
\end{equation}
A solution is at the same time sub and supersolution. See \eqref{neww3} in the Appendix for the definition of $-\EE_x$. Notice that in
accordance with condition {\bf iii)} in Definition \ref{def:HJsol}  of the supersolution on the network, we have considered in \eqref{HJdis}
only the edge  ending at $x$. As pointed out in Remark \ref{neww2}, it is equivalent to instead consider arcs (in Definition \ref{def:HJsol})
and edges (in \eqref{HJdis}) starting at $x$

\medskip

The following results provide the bridge  linking  \eqref{HJdis} to
\eqref{HJ}.

\smallskip
\begin{Theorem}\label{theo:ponte}
A solution $U$ to \eqref{HJdis} can be uniquely extended to a
solution $u$ to \eqref{HJ}. Conversely, given a solution $u$ to
\eqref{HJ},  $U=u|_{\VV}$ is a solution to \eqref{HJdis}.
\end{Theorem}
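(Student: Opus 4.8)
The plan is to prove the two implications separately, constructing the correspondence arc by arc. The decisive tool in both directions is the Dirichlet solvability criterion of Proposition \ref{fork}, used together with the maximality characterizations of $\um^\ga$ and $u^\ga_\al$ and with Corollary \ref{charatris}.

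For the extension, suppose $U$ solves \eqref{HJdis}. I would fix an arc $\ga$ with $\ga(0)=x$, $\ga(1)=y$ and set $\al=U(x)$, $\be=U(y)$. Applying the subsolution inequality \eqref{subsolbis} at $y$ with the edge $\Psi^{-1}(\ga)$ gives $\be\le u^\ga_\al(1)$, while applying it at $x$ with the reversed edge $\Psi^{-1}(-\ga)$ (whose origin is $y$) gives $\al\le u^{-\ga}_\be(1)$. These are exactly the two conditions \eqref{fork2}, so Proposition \ref{fork} produces a unique solution of \eqref{HJg} on $\ga$ with boundary values $\al$ at $0$ and $\be$ at $1$. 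Setting $u$ equal to this solution on each arc yields a continuous function on $\Gamma$ agreeing with $U$ at the vertices, whose trace $s\mapsto u(\ga(s))$ solves \eqref{HJg} on $(0,1)$; hence conditions {\bf ii)} in both halves of Definition \ref{def:HJsol} hold. Uniqueness of the extension is immediate, since any solution of \eqref{HJ} restricting to $U$ on $\VV$ must agree on each arc with the unique Dirichlet solution carrying those endpoint values.

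The remaining point, which I expect to be the main obstacle, is the vertex condition {\bf iii)} for the extension. Fix a vertex $y$. Since $U$ is a supersolution, the finite minimum in \eqref{HJdis} is attained, and combined with the subsolution inequality there is an edge $e\in-\EE_y$, say corresponding to an arc $\ga$ with $\ga(0)=x$ and $\ga(1)=y$, for which $U(y)=\rho(U(x),e)=u^\ga_{U(x)}(1)$. I claim the extension restricted to $\ga$ coincides with $u^\ga_{U(x)}$. Indeed, the extension on $\ga$ is a subsolution taking the value $U(x)$ at $0$, so it lies below $\um^\ga$ everywhere; evaluating at $0$ forces $U(x)\le\um^\ga(0)$, and then Lemma \ref{wellposed} gives $u^\ga_{U(x)}(0)=U(x)$. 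Thus $u^\ga_{U(x)}$ is a solution of \eqref{HJg} with the same endpoint values $U(x)$ and $u^\ga_{U(x)}(1)=U(y)$ as the extension, and uniqueness in Proposition \ref{fork} identifies the two. Since $u^\ga_{U(x)}$ satisfies the state constraint boundary condition at $s=1$ by Lemma \ref{wellposed}, condition {\bf iii)} holds at $y$, and the extension is a solution of \eqref{HJ}.

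For the converse, let $u$ solve \eqref{HJ} and put $U=u|_{\VV}$. For any arc $\ga$ ending at a vertex $y$, the trace $u\circ\ga$ is a subsolution of \eqref{HJg} taking the value $U(\ga(0))$ at $0$, so by maximality $u\circ\ga\le u^\ga_{U(\ga(0))}$, whence $U(y)\le u^\ga_{U(\ga(0))}(1)=\rho(U(\ga(0)),e)$; minimizing over all edges $e\in-\EE_y$ yields the subsolution inequality for \eqref{HJdis}. For the reverse inequality, condition {\bf iii)} supplies an arc $\ga$ with terminal vertex $y$ along which $u\circ\ga$ is a supersolution satisfying the state constraint at $s=1$, so Corollary \ref{charatris} gives $u\circ\ga\ge u^\ga_{U(\ga(0))}$ and therefore $U(y)\ge\rho(U(\ga(0)),e)$ for that edge. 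The two inequalities together show that $U$ solves \eqref{HJdis}, completing the proof.
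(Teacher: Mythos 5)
Your proposal is correct and takes essentially the same route as the paper: both directions rest on the subsolution inequalities feeding into Proposition \ref{fork} for arc-by-arc Dirichlet solvability, with the extension along a minimizing edge identified with $u^\ga_{U(x)}$ to get the state constraint condition {\bf iii)}, and the converse obtained from maximality of $u^\ga_\al$ plus the characterization results. The only (immaterial) difference is that you invoke Lemma \ref{wellposed} together with uniqueness in Proposition \ref{fork}, and Corollary \ref{charatris}, where the paper appeals to Corollary \ref{charabis}.
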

\begin{proof}
Assume that $U$ solves \eqref{HJdis}. Let  $e$ be an edge in $\EE$.
We set, to ease notations, $\ga=\Psi(e)$,  $\al = U(\oo(e))$, $\be =
U(\tt(e))$. By the very definition of subsolution to \eqref{HJdis}
and $\rho$ we have
\begin{gather*}
\be \le \rho(\al, e)=u^\ga_{\al}(1)\\
\al\le \rho(\be,e)= u^{-\ga}_\be(1)
\end{gather*}
and this implies, thanks to Proposition \ref{fork}, that there is a
unique solution $w$ to \eqref{HJg} with $w(0)=\al$ and $w(1)=\be$.
We have in addition that for any $x\in V$ there exists $e_0 \in
-\EE_x$ with
\[ U(x)=\rho(U(\oo(e_0)),e_0).\]
This implies that $U$  can be uniquely extended as the maximal
subsolution to  \eqref{HJg}  less than or equal to $U(\oo(e_0))$ at
$s=0$.  It is by Corollary  \ref{charabis}  a solution to
\eqref{HJg} and  satisfies the state constraint boundary condition
at $s =1$.  This shows the first part of the assertion. Conversely,
assume that $u$ is a solution to \eqref{HJ}, and set $U=u|_\VV$. We
deduce from the definition of $\rho$ and Proposition \ref{fork}
\begin{equation}\label{ponte1}
  U(x)\le\min_{e\in- \EE_x} \rho(U(\oo(e)),e) \qquad\text{ for any
}x\in\VV.
\end{equation}
Taking into account that  $u$ satisfies condition {\bf iii)} in the
definition of solution to \eqref{HJ},  we find in force of
Corollary \ref{charabis}  for any $x\in\VV$ an $e_0 \in - \EE_x$
for which formula \eqref{ponte1} holds with equality.
 This shows that $U$ solves \eqref{HJdis} and concludes the
proof.
\end{proof}

As a consequence of the very definition of $\rho$ and Corollary \ref{charatris}, we also have

\begin{Proposition}\label{pontebis} The trace on $\VV$ of any subsolution (resp. supersolution) to   \eqref{HJ} is a subsolution (resp.
supersolution ) of \eqref{HJdis}

\end{Proposition}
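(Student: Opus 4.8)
The plan is to dispatch the two assertions separately, using the maximality encoded in the definition of $\rho$ for the subsolution statement and Corollary \ref{charatris} for the supersolution statement; the continuity clause {\bf i)} of Definition \ref{def:HJsol} plays no role, since the discrete notions in \eqref{subsolbis} are mere inequalities at each vertex. Throughout, for an edge $e$ I write $\ga = \Psi(e)$ and use that $\Psi$ respects orientation, so that $\oo(e)$ corresponds to $\ga(0)$ and $\tt(e)$ to $\ga(1)$, and that $-\EE_x$ consists exactly of the edges terminating at $x$.

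First I would handle subsolutions. Let $u$ be a subsolution to \eqref{HJ} and $U = u|_\VV$. Fixing $x \in \VV$ and an arbitrary $e \in -\EE_x$, I set $\al = U(\oo(e)) = u(\ga(0))$. By clause {\bf ii)} the function $s \mapsto u(\ga(s))$ is a subsolution to \eqref{HJg} with value $\al$ at $s=0$, hence it lies in the admissible family defining $u^\ga_\al$; by maximality $u(\ga(s)) \le u^\ga_\al(s)$ on $[0,1]$. Evaluating at $s=1$ and recalling $\ga(1) = x$ together with $\rho(\al,e) = u^\ga_\al(1)$ gives $U(x) \le \rho(U(\oo(e)),e)$. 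As $e \in -\EE_x$ was arbitrary, taking the minimum yields the subsolution inequality of \eqref{subsolbis}. This is exactly the inequality \eqref{ponte1} from the proof of Theorem \ref{theo:ponte}, obtained here from maximality alone rather than from Proposition \ref{fork}.

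Then I would turn to supersolutions. Let $u$ be a supersolution and fix $x \in \VV$. The decisive input is clause {\bf iii)}: there is at least one arc $\ga_0$ with $\ga_0(1) = x$ such that $s \mapsto u(\ga_0(s))$ satisfies the state constraint boundary condition for \eqref{HJg} at $s=1$. I set $e_0 = \Psi^{-1}(\ga_0) \in -\EE_x$ and $\al = U(\oo(e_0)) = u(\ga_0(0))$. By clause {\bf ii)}, $s \mapsto u(\ga_0(s))$ is a supersolution to \eqref{HJg} with value $\al$ at $s=0$ meeting the state constraint at $s=1$, so Corollary \ref{charatris} gives $u(\ga_0(s)) \ge u^{\ga_0}_\al(s)$ on $[0,1]$. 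Evaluating at $s=1$ yields $U(x) \ge \rho(U(\oo(e_0)),e_0) \ge \min_{e \in -\EE_x} \rho(U(\oo(e)),e)$, which is the supersolution inequality.

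The argument is essentially routine, and I expect no serious obstacle beyond careful bookkeeping of orientations — that $-\EE_x$ are the incoming edges, so that $\rho(U(\oo(e)),e) = u^\ga_\al(1)$ is read off at the terminal endpoint $\ga(1) = x$. The one structural point worth flagging is the asymmetry between the two cases: maximality delivers the subsolution bound simultaneously for every incoming edge, whereas clause {\bf iii)} only produces a single edge $e_0$ realizing the supersolution bound; this suffices precisely because on the supersolution side one compares against the minimum over $-\EE_x$.
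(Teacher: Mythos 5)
Your proposal is correct and follows exactly the route the paper indicates: the paper states Proposition \ref{pontebis} as an immediate consequence of the definition of $\rho$ (maximality of $u^\ga_\al$, giving the subsolution inequality at every incoming edge) together with Corollary \ref{charatris} (giving the supersolution inequality at the edge supplied by clause \textbf{iii)}). Your write-up simply supplies the bookkeeping the paper leaves implicit, with no divergence in method.
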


\smallskip

 We  establish a  comparison principle for \eqref{HJdis}.
\smallskip

\begin{Theorem}\label{sol!}  Let $U$, $W$ be a subsolution and a supersolution, respectively, to
\eqref{HJdis}. Then $U \leq W$.
\end{Theorem}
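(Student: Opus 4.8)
The plan is to argue by contradiction, exploiting the structural asymmetry between the two inequalities: the subsolution condition holds edge by edge, whereas the supersolution condition only furnishes, at each vertex, a single \emph{minimizing} edge. Suppose the conclusion $U \leq W$ fails. Since the network, and hence $\VV$, is finite, the function $U - W$ attains a positive maximum $m := \max_{\VV}(U - W) > 0$ at some vertex $x_0$.

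At $x_0$ I would first use the supersolution property of $W$ to select an edge $e_0 \in -\EE_{x_0}$ realizing the minimum on the right-hand side of \eqref{HJdis}, so that $W(x_0) \geq \rho(W(\oo(e_0)), e_0)$. Because $U$ is a subsolution, the inequality $U(x_0) \leq \rho(U(\oo(e)), e)$ holds for \emph{every} $e \in -\EE_{x_0}$, in particular for this same $e_0$. Subtracting the two inequalities and writing $y_0 := \oo(e_0) \in \VV$ gives
\[
 m = U(x_0) - W(x_0) \leq \rho(U(y_0), e_0) - \rho(W(y_0), e_0),
\]
while the maximality of $m$ yields the companion bound $U(y_0) - W(y_0) \leq m$.

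It then remains to derive a contradiction by distinguishing two cases according to the sign of $U(y_0) - W(y_0)$. If $U(y_0) \leq W(y_0)$, then the monotonicity of $\al \mapsto \rho(\al, e_0) = u^\ga_\al(1)$ from Proposition \ref{varalfa} {\bf i)} forces the right-hand side of the displayed bound to be nonpositive, contradicting $m > 0$. If instead $U(y_0) > W(y_0)$, I would invoke the strict monotonicity of $\al \mapsto \rho(\al, e_0) - \al$ from Proposition \ref{varalfa} {\bf ii)}, which gives $\rho(U(y_0), e_0) - \rho(W(y_0), e_0) < U(y_0) - W(y_0) \leq m$, again contradicting the lower bound $m \leq \rho(U(y_0), e_0) - \rho(W(y_0), e_0)$. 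In both cases we reach a contradiction, so $U \leq W$.

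The crux of the argument — and the only step I expect to be delicate — is the second case: plain monotonicity of $\rho$ would not suffice to close it, and it is precisely the \emph{strict} decrease of $\al \mapsto u^\ga_\al(1) - \al$ (the property singled out as crucial just before Proposition \ref{varalfa}) that converts the weak inequality $U(y_0) - W(y_0) \leq m$ into a strict one and produces the contradiction. Everything else is elementary finite combinatorics, hinging on correctly reading the subsolution inequality as holding for each edge and extracting the minimizing edge from the supersolution side.
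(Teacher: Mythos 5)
Your proof is correct and follows essentially the same route as the paper: a contradiction at a maximizer $x_0$ of $U-W$, extraction of the minimizing edge $e_0$ from the supersolution inequality, and then the combination of the nondecreasing character of $\rho(\cdot,e_0)$ with the strict decrease of $\al \mapsto \rho(\al,e_0)-\al$ from Proposition \ref{varalfa}. Your two-case split is just a repackaging of the paper's argument, which first deduces $U(\oo(e_0))>W(\oo(e_0))$ from monotonicity and then contradicts maximality via the strict inequality.
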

\begin{proof} Assume by contradiction that
$\max_\VV U - W >0$, and  denote by  $x_0$  a  corresponding maximizer.
In force of the very definition of subsolution and supersolution,  there
is $e_0 \in - \EE_{x_0}$  with
\begin{eqnarray*}
  U(x_0) &\leq&   \rho(U(\oo(e_0)), e_0)\\
  W(x_0) &\geq&  \rho(W(\oo(e_0)), e_0).
\end{eqnarray*}
By subtracting the above relations,  we obtain
\begin{equation}\label{sol!1}
 U(x_0)- W(x_0) \leq   \rho(U(\oo(e_0)), e_0)  -
\rho(W(\oo(e_0)), e_0) ,
\end{equation}
 and so,  bearing in mind that $U(x_0) > W(x_0)$, we get
\begin{equation}\label{sol!2}
\rho(U(\oo(e_0)), e_0)  - \rho(W(\oo(e_0)), e_0) >0.
\end{equation}
Since  $ \rho(\cdot,e_0)$  is  nondecreasing by Proposition \ref{varalfa}, we
derive from \eqref{sol!2}  $U(\oo(e_0))> W(\oo(e_0))$.  Thus,
exploiting the strictly decreasing character of $ \al \mapsto \rho(\al,
e_0) - \al$, we further get from \eqref{sol!1}
\[U(\oo(e_0))- W(\oo(e_0)) >  \rho(U(\oo(e_0)), e_0)  -
\rho(W(\oo(e_0)), e_0) \geq  U(x_0) - W(x_0)  \]
which contradicts $x_0$ being a maximizer  of $U-W$ in $\VV$.
\end{proof}

\smallskip

We  derive as a consequence:

\begin{Theorem}\label{theo:discunisol} The discounted discrete equation  can have at most one
solution.
\end{Theorem}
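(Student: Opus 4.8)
The plan is to obtain uniqueness as an immediate corollary of the comparison principle just established in Theorem \ref{sol!}, applied twice with the roles of the two solutions interchanged. The key observation is that, by definition, a solution to \eqref{HJdis} is simultaneously a subsolution and a supersolution, so each candidate solution can be fed into Theorem \ref{sol!} in either slot.

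Concretely, suppose $U_1$ and $U_2$ are both solutions to \eqref{HJdis}. First I would regard $U_1$ as a subsolution and $U_2$ as a supersolution; Theorem \ref{sol!} then yields $U_1 \leq U_2$ on $\VV$. Swapping the roles, I regard $U_2$ as a subsolution and $U_1$ as a supersolution, and the same theorem gives $U_2 \leq U_1$ on $\VV$. Combining the two inequalities forces $U_1 = U_2$, which is the desired uniqueness.

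There is no genuine obstacle here: all the substance resides in the strict-monotonicity argument of Theorem \ref{sol!} (ultimately Proposition \ref{varalfa} {\bf ii)}), and the present statement is a purely formal consequence. The only point worth flagging is that the comparison principle must be stated and proved at the level of a subsolution versus a supersolution, rather than solution versus solution, precisely so that this symmetric double application is legitimate; since Theorem \ref{sol!} is indeed phrased in that generality, the deduction is complete.
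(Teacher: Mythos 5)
Your argument is correct and is precisely what the paper intends: Theorem \ref{theo:discunisol} is stated there as an immediate consequence of the comparison principle in Theorem \ref{sol!}, obtained exactly by the symmetric double application you describe (each solution is both a sub- and a supersolution, so $U_1 \leq U_2$ and $U_2 \leq U_1$). No difference in approach and no gaps.
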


\smallskip

By combining Theorem \ref{sol!} and Proposition \ref{pontebis}, we finally  state a comparison principle for \eqref{HJ}.

\begin{Theorem}\label{compara}Let $u$, $w$ be sub and supersolution of \eqref{HJ}, the $u \leq w$ in $\Gamma$.
\end{Theorem}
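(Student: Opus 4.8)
The plan is to reduce the global comparison on $\Gamma$ to two ingredients already available: the discrete comparison principle on the underlying graph (Theorem \ref{sol!}) and the one-dimensional comparison principle on each arc (Theorem \ref{theo:comprinc}). The architecture of the whole paper is designed precisely so that the delicate vertex interaction is handled once and for all at the discrete level, and the continuous statement then follows arc by arc.

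First I would pass to the traces $U = u|_\VV$ and $W = w|_\VV$. By Proposition \ref{pontebis}, since $u$ is a subsolution and $w$ a supersolution of \eqref{HJ}, the functions $U$ and $W$ are respectively a subsolution and a supersolution of the discrete equation \eqref{HJdis}. Applying Theorem \ref{sol!} directly yields
\[
  U(x) \leq W(x) \qquad \text{for every } x \in \VV,
\]
that is, $u \leq w$ at all vertices. This is the step where the partial vertex condition {\bf iii)} is exploited, but it has already been absorbed into the definition of supersolution for \eqref{HJdis}, so nothing new has to be checked here.

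Next I would lift this vertex inequality to the interior of each arc. Fix an arbitrary $\ga \in \EN$ with initial and terminal vertices $\ga(0)$ and $\ga(1)$. By Definition \ref{def:HJsol}, $s \mapsto u(\ga(s))$ is a (continuous) subsolution and $s \mapsto w(\ga(s))$ is a supersolution of \eqref{HJg} on $(0,1)$. The endpoints $\ga(0)$ and $\ga(1)$ are vertices, so the previous step gives $u(\ga(0)) \leq w(\ga(0))$ and $u(\ga(1)) \leq w(\ga(1))$, i.e. $u \circ \ga \leq w \circ \ga$ on $\{0,1\}$. The one-dimensional Comparison Principle, Theorem \ref{theo:comprinc}, then yields $u(\ga(s)) \leq w(\ga(s))$ for all $s \in [0,1]$.

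Since every point of $\Gamma$ lies on the support of some arc $\ga \in \EN$, running the last argument over all arcs gives $u \leq w$ throughout $\Gamma$, which is the assertion. I do not expect any genuine obstacle: the only subtle point is making sure that the supersolution condition needed to invoke Theorem \ref{theo:comprinc} on each arc is the plain interior supersolution property on $(0,1)$ from item {\bf ii)}, and that the boundary inequalities at the two endpoints come exclusively from the discrete comparison, rather than from the one-sided vertex condition {\bf iii)}. With this clean separation, the proof is a short concatenation of the two comparison results.
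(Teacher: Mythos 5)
Your proof is correct and follows exactly the paper's own argument: Proposition \ref{pontebis} to pass to traces, Theorem \ref{sol!} for comparison at the vertices, and then the one--dimensional comparison principle, Theorem \ref{theo:comprinc}, applied arc by arc to propagate the inequality to all of $\Gamma$. The only difference is that you spell out the last step in detail, while the paper states it in one line.
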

\begin{proof} By Proposition \ref{pontebis} the traces of $u$, $w$  on $\VV$ are sub and supersolution to  \eqref{HJdis}, respectively. By
Theorem \ref{sol!}
$u|_{\VV} \leq w|_{\VV}$.  This gives the assertion in force of Theorem \ref{theo:comprinc}.

\end{proof}

\section{Analysis of the discrete equation}\label{discrete}

In this section we extend the definition of
$\rho$ from edges to general paths  via an inductive procedure on   the length of  paths. We furthermore define  some related quantities.
\smallskip

\begin{Definition} \label{functional}
Given $\al \in \R$ and a path $\xi$, we define
\[\rho(\al,\xi)= \rho(\al,e) \qquad\hbox{ if $\xi=e$ .}\]
If $\xi=(e_i)_{i=1}^M$, for $M >1$, we set
$\bar\xi=(e_i)_{i=1}^{M-1}$ and define
\[\rho(\al,\xi)=  \rho(\rho(\al,\bar\xi),e_M).\]
\end{Definition}

\smallskip
The following concatenation formula is
inherent to the definition. Let $\xi$, $\eta$  be paths with
$\tt(\xi)= \oo(\eta)$ then
\begin{equation}\label{concatena}
   \rho(\al, \xi \cup \eta)= \rho(\rho(\al,\xi),\eta) \qquad\hbox{for any $\al$.}
\end{equation}

\smallskip

Taking into account  that the property of being continuous is stable
for composition of functions, we get from Proposition
\ref{varalfa}:

\smallskip

\begin{Proposition}\label{preprecycledis} Given any path $\xi$, the function
\[\al  \mapsto \rho(\al,\xi)\]
is continuous.
\end{Proposition}

\smallskip

The next Proposition is a direct consequence of Proposition \ref{varalfa} and will be repeatedly used in what follows.
\begin{Proposition}\label{precycledis} The following monotonicity  properties hold for any
path $\xi$
\begin{itemize}
    \item[{\bf i)}] $\al \mapsto  \rho(\al,\xi)$ is nondecreasing;
    \item[{\bf ii)}]  $\al \mapsto \rho(\al,\xi) - \al$ is strictly
    decreasing.
\end{itemize}
\end{Proposition}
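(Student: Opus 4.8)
The plan is to argue by induction on the length $M$ of the path $\xi$, using the recursive definition of $\rho(\al,\xi)$ together with Proposition \ref{varalfa} at each stage. For the base case $M=1$ we have $\xi=e$ and $\rho(\al,e)=u^\ga_\al(1)$ with $\ga=\Psi(e)$, so items i) and ii) are precisely Proposition \ref{varalfa} i) and ii). This reduces everything to propagating the two properties through one more concatenation.

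For the inductive step, write $\xi=(e_i)_{i=1}^M$ and $\bar\xi=(e_i)_{i=1}^{M-1}$, so that by definition $\rho(\al,\xi)=\rho(\rho(\al,\bar\xi),e_M)$. Set $g(\al)=\rho(\al,\bar\xi)$, to which the inductive hypothesis applies, and $f(\be)=\rho(\be,e_M)$, governed by Proposition \ref{varalfa}. Item i) is then immediate: $g$ and $f$ are both nondecreasing, hence so is their composition $f\circ g=\rho(\cdot,\xi)$.

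Item ii) is the heart of the matter. Fix $\al_1<\al_2$ and put $\be_j=g(\al_j)$; since $g$ is nondecreasing, $\be_1\le\be_2$. I decompose
\[\rho(\al_j,\xi)-\al_j=\bigl(f(\be_j)-\be_j\bigr)+\bigl(\be_j-\al_j\bigr).\]
By the inductive hypothesis $g(\al)-\al$ is strictly decreasing, so $(\be_1-\al_1)-(\be_2-\al_2)>0$. By Proposition \ref{varalfa} ii) the map $\be\mapsto f(\be)-\be$ is strictly decreasing, so from $\be_1\le\be_2$ we get $(f(\be_1)-\be_1)-(f(\be_2)-\be_2)\ge 0$, with equality exactly when $\be_1=\be_2$. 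Adding the two contributions yields $[\rho(\al_1,\xi)-\al_1]-[\rho(\al_2,\xi)-\al_2]>0$, which is the asserted strict monotonicity and completes the induction.

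The main obstacle, and the only genuinely delicate point, is that the outer subtraction is of $\al$ rather than of the intermediate value $\be=g(\al)$, so one cannot simply compose the property ``value minus identity is strictly decreasing'' twice. The decomposition above circumvents this. The critical case is $\be_1=\be_2$ (which can occur since $g$ is only nondecreasing, not strictly increasing): there the outer term $f(\be)-\be$ contributes nothing, and it is precisely the strict decrease of $g-\mathrm{id}$ coming from the inductive hypothesis, not that of $f-\mathrm{id}$, that keeps the final inequality strict.
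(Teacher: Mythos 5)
Your proof is correct and follows essentially the same route as the paper's: induction on the length of $\xi$, with item \textbf{i)} obtained as a composition of nondecreasing maps, and item \textbf{ii)} obtained by combining the strict decrease of $\rho(\cdot,\bar\xi)-\mathrm{id}$ (inductive hypothesis) with the merely weak decrease contributed by the last edge $e_M$. Your sum decomposition is just a rearrangement of the paper's chain of two inequalities, and both arguments locate the source of strictness in the same place, namely the inductive part rather than the final edge.
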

\begin{proof} We prove both items arguing by induction on the length
of the path. If it is $1$, and so the path reduces to an edge, the
statement  is a direct consequence of the definition of $\rho$
and Proposition \ref{varalfa}. We assume the assertion to be true for
any path with length less than $M$ and  show it for  $\xi :=(e_i)_{i=1}^M$. By the very definition of $\rho$
\begin{equation}\label{precycledis0}
 \rho(\al,\xi) =   \rho(\rho(\al, \bar\xi),e_M),
\end{equation}
where $\bar\xi= (e_i)_{i=1}^{M-1}$.
 The functions $ \al \mapsto  \rho(\al,\bar\xi)$
and $\al \mapsto  \rho(\al, e_M)$ are nondecreasing by the
inductive step,  and $\rho(\cdot,\xi)$ is therefore nondecreasing as composition of
nondecreasing functions. This concludes the proof of item {\bf i)}.
To show {\bf ii)}, we argue again by induction. Given $\be < \al$, we have by item {\bf i)} $\rho(\be,\ov\xi) \leq \rho(\al,\ov\xi)$,
exploiting this inequality,  and the  inductive step, we get
\begin{eqnarray*}
 \rho(\al, \ov\xi) - \rho(\be,\ov\xi) &<&   \al - \be\\
   \rho(\rho(\al, \bar\xi),e_M) - \rho(\rho(\be, \bar\xi),e_M) &\leq& \rho(\al, \ov\xi) - \rho(\be, \ov\xi).
\end{eqnarray*}
By combining the above inequalities, we obtain
\[ \rho(\al,\xi) - \rho(\be,\xi) < \al -\be\]
which gives {\bf ii)}.
\end{proof}

\smallskip

The next result is a generalization to paths  of Corollary \ref{corvaralfa}.   It   has a crucial relevance since the fixed points  of $\rho$
will play a key role in our analysis.

\begin{Corollary}\label{cycledis} For any path  $\xi$ there exists one and only one $\al \in \R$
with $\rho(\al,\xi) = \al$.
\end{Corollary}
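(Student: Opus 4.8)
The plan is to establish existence and uniqueness separately, mirroring the structure of the single-edge result in Corollary \ref{corvaralfa}, but now relying on the path-level monotonicity properties collected in Proposition \ref{precycledis}. Set $g(\al) = \rho(\al,\xi) - \al$. By Proposition \ref{preprecycledis} the map $\al \mapsto \rho(\al,\xi)$ is continuous, hence $g$ is continuous; by Proposition \ref{precycledis} {\bf ii)} the function $g$ is strictly decreasing. A fixed point $\rho(\al,\xi) = \al$ is exactly a zero of $g$, so the whole statement reduces to showing that the continuous strictly decreasing function $g$ has exactly one zero.

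Uniqueness is then immediate: a strictly decreasing function takes each value at most once, so $g(\al) = 0$ can hold for at most one $\al$. This part uses only item {\bf ii)} of Proposition \ref{precycledis} and requires no further work.

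For existence I would invoke the intermediate value theorem, which forces me to exhibit the two-sided divergence $\lim_{\al \to -\infty} g(\al) = +\infty$ and $\lim_{\al \to +\infty} g(\al) = -\infty$; together with continuity this guarantees a zero. This divergence is the analogue of Proposition \ref{varalfa} {\bf iii)} but now for a composite functional along the path, so it is the step that needs the most care. I would argue by induction on the length $M$ of $\xi = (e_i)_{i=1}^M$. The base case $M=1$ is precisely Proposition \ref{varalfa} {\bf iii)}. For the inductive step, write $\rho(\al,\xi) = \rho(\rho(\al,\bar\xi), e_M)$ with $\bar\xi = (e_i)_{i=1}^{M-1}$. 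By the inductive hypothesis, as $\al \to -\infty$ the inner value $\rho(\al,\bar\xi)$ diverges to $-\infty$ (since $\rho(\al,\bar\xi) - \al \to +\infty$ and hence $\rho(\al,\bar\xi) \to -\infty$ is not automatic—one must check the sign, but in fact $\rho(\al,\bar\xi) = \al + g_{\bar\xi}(\al)$ with $g_{\bar\xi}$ strictly decreasing, so the behavior of $\rho(\al,\bar\xi)$ itself as $\al\to\pm\infty$ must be pinned down). Then applying Proposition \ref{varalfa} {\bf iii)} to the last edge $e_M$ with argument $\rho(\al,\bar\xi)$ propagates the divergence through the composition.

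The main obstacle is exactly this bookkeeping in the inductive step: I must track not just $\rho(\al,\xi) - \al$ but the raw quantity $\rho(\al,\xi)$ as $\al \to \pm\infty$, because the outer function in the composition is evaluated at the inner value, not at $\al$. The clean way to handle it is to prove a slightly stronger inductive statement, namely that $\rho(\cdot,\xi)$ is nondecreasing (already available from Proposition \ref{precycledis} {\bf i)}) together with $\rho(\al,\xi) \to \pm\infty$ as $\al \to \pm\infty$; once the inner map is known to diverge to $\mp\infty$ appropriately, feeding this into the single-edge limits of Proposition \ref{varalfa} {\bf iii)} closes the induction. With the two one-sided limits of $g$ in hand, the intermediate value theorem supplies existence and Proposition \ref{precycledis} {\bf ii)} supplies uniqueness, completing the argument.
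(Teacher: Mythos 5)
Your reduction of the statement to finding the unique zero of the continuous, strictly decreasing function $g(\alpha)=\rho(\alpha,\xi)-\alpha$ is exactly the right frame, and your uniqueness argument is the same as the paper's. The gap is in the existence half: the ``slightly stronger inductive statement'' you propose to close the induction, namely that $\rho(\alpha,\xi)\to\pm\infty$ as $\alpha\to\pm\infty$, is false. Already for a single edge $e$ with $\gamma=\Psi(e)$, Corollary \ref{corcharabis} gives $u^{\gamma}_{\alpha}\equiv u^{\gamma}_{\mathrm{max}}$ for every $\alpha\geq u^{\gamma}_{\mathrm{max}}(0)$, i.e. $\rho(\alpha,e)=\overline{\alpha}(e)$ for all $\alpha\geq\underline{\alpha}(e)$: the map $\rho(\cdot,e)$ saturates and is \emph{constant} for large $\alpha$, hence bounded above; by monotonicity and composition, $\rho(\alpha,\xi)\leq\overline{\alpha}(e_M)$ for every $\alpha$ (this is precisely \eqref{cycledis1} in the paper's proof). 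So the inner value in your composition never diverges to $+\infty$ and the induction you designed cannot be closed. In the other direction the situation is just as bad: $\rho(\alpha,\xi)\to-\infty$ as $\alpha\to-\infty$ is not guaranteed either (only the difference $\rho(\alpha,\xi)-\alpha$ must blow up, by Proposition \ref{varalfa} {\bf iii)}; the quantity $\rho(\alpha,\xi)$ itself can stabilize at a finite value, depending on the growth of $H_\gamma$ in $p$).

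The repair is that the saturation which kills your induction actually hands you one side for free: $g(\alpha)\leq\overline{\alpha}(e_M)-\alpha\to-\infty$ as $\alpha\to+\infty$, with no induction at all. For the other side the paper proves no divergence of $\rho$ itself: it sets $\alpha_0=\min\{\alpha \mid \rho(\alpha,e_i)=\alpha,\; i=1,\dots,M\}$ (well defined edge by edge thanks to Corollary \ref{corvaralfa}) and shows by induction on the length, using only the nondecreasing character of $\rho(\cdot,e_M)$ and the single--edge strict monotonicity, that $\rho(\alpha,\xi)=\rho(\rho(\alpha,\bar\xi),e_M)\geq\rho(\alpha,e_M)>\alpha$ whenever $\alpha<\alpha_0$. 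Thus $g$ changes sign and the intermediate value theorem applies. Your route could also be patched by a case distinction in the inductive step (if $\rho(\alpha,\bar\xi)$ has a finite limit as $\alpha\to-\infty$, then $\rho(\alpha,\xi)$ stays bounded and $g\to+\infty$ trivially; if instead $\rho(\alpha,\bar\xi)\to-\infty$, apply Proposition \ref{varalfa} {\bf iii)} to $e_M$ at the inner value and add the inductive hypothesis), but as stated your induction hypothesis is simply not true.
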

\begin{proof} We  have by the definition of $\ov\al$ and $\rho$
\begin{equation}\label{cycledis0}
\rho(\al,e) \leq  \ov\al(e) \qquad\hbox{for any $e \in\EE$, $\al \in
\R$.}
\end{equation}
Let $\xi=(e_i)_{i=1}^M$ and  $\bar\xi=(e_i)_{i=1}^{M-1}$.   We  get in force of \eqref{cycledis0} and
the concatenation formula  \eqref{concatena}
 \begin{equation}\label{cycledis1}
\rho(\al,\xi)=  \rho(\rho(\al,\bar\xi),e_M) \leq \ov\al(e_M) \qquad\hbox{ for any $\al \in \R$.}
\end{equation}
Taking into account Corollary \ref{corvaralfa},  we set
\[\al_0 = \min\{\al \mid \rho(\al,e_i)=\al ,\; i=1,\cdots,M\}.\]
We claim that
\begin{equation}\label{cycledis2}
   \rho(\al,\xi) > \al \qquad\hbox{for $\al < \al_0$.}
\end{equation}
We  fix $\al  > \al_0$ and prove the claim  arguing by induction on the length of the curve. If the
length is $1$, say $\xi=e$, then \eqref{cycledis2} holds because of
the strict monotonicity of $ \al \mapsto \rho(\al,e) - \al$.  Assuming the property
true for curves of length less than $M$, we get
$\rho(\al,\bar\xi) >\al$ and consequently by the nondecreasing character of $\rho(\cdot,e_M)$ and \eqref{concatena}
\[\rho(\al,\xi) =\rho(\rho(\al,\bar\xi),e_M) \geq  \rho(\al,e_M) > \al,\] proving the claim. Relations
\eqref{cycledis1}, \eqref{cycledis2} plus continuity and
monotonicity of $\rho(\cdot,\xi)$, see Propositions
\ref{precycledis}, \ref{preprecycledis}, give the assertion.
\end{proof}
In what follows, we will exploit the property highlighted by the above proposition solely for cycles.

\smallskip

\begin{Definition}\label{defbe}
 Given a cycle  $\xi$, we  define  $\be(\xi)$ to be  the unique fixed point of
\[\al \mapsto \rho(\al,\xi).\]
\end{Definition}

\smallskip

\begin{Proposition}\label{postcycledis}  For any edge $e$, the cycle $\xi=(e,-e)$
satisfies
\[\be(\xi) = \un\al(e).\]
\end{Proposition}
\begin{proof}  We have
\[ \rho(\un\al(e),\xi) =  \rho(\rho(\un \al(e),e), -e)\]
and we derive, taking into account Lemma \ref{fondadis}
\[ \rho(\un\al(e),\xi) =  \rho(\un\al(-e), -e)=\un\al(e).\]
\end{proof}

\smallskip

\begin{Remark}
It is worth pointing out that $\be(\xi)$, see Definition \ref{defbe},  also depends on the initial point of the cycle. In other terms, if we
consider another cycle $\eta$ with the same edges as $\xi$ but different initial point then in general $\be(\xi) \neq \be(\eta)$. For example,
if we define, for a given edge $e$,  $\xi=\{e,-e\}$ and $\eta= \{- e,e\}$ then, according to Proposition \ref{postcycledis}, $\be(\xi)=
\un\al(e)$ and $\be(\eta)=\ov\al(e)$, which are clearly in general different. In what follows when we will say that a cycle is a based on a
certain vertex, we will mean that the vertex is the initial point of the cycle.
\end{Remark}

\bigskip

\section{Existence of solutions of \eqref{HJdis}, \eqref{HJ} and representation formulae}\label{representation}
We   show that a solution  to  \eqref{HJdis}  does exist
providing  a representation formula.   We define a function $f:
\VV \to\R$ via
\[f(x) = \inf \{\be(\xi)\mid\hbox{for some cycle
$\xi$ based on $x$}\}.\] The definition is well posed thanks to
Corollary \ref{cycledis}.  We set for $x \in \VV$
\begin{equation}\label{soldis}
   U(x) = \inf \{\rho(f(\oo(\xi)),\xi) \mid \,\xi
   \;\hbox{path with} \; \tt(\xi)=x\}.
\end{equation}

Since for any vertex $x$, any cycle based on $x$ is an admissible path for \eqref{soldis}, it is clear that
\[U(x) \leq  f(x).\]

We have

\begin{Theorem}\label{esiste} The function $U$ defined in \eqref{soldis} is
solution to \eqref{HJdis}.
\end{Theorem}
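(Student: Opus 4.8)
The plan is to verify directly that the function $U$ defined by the representation formula \eqref{soldis} satisfies the fixed point relation \eqref{HJdis}, i.e. that for every vertex $x$
\[
U(x) = \min_{e \in -\EE_x} \rho(U(\oo(e)),e).
\]
I would prove the two inequalities separately. The key structural tool is the concatenation formula \eqref{concatena} together with the monotonicity properties of $\rho(\cdot,\xi)$ from Proposition \ref{precycledis}, which ensure that $\rho(\cdot,e)$ is continuous and nondecreasing in its first argument. The heart of the matter is that the infimum in \eqref{soldis} is taken over \emph{all} paths ending at $x$, and any such path either is a cycle based at $x$ or can be decomposed as a shorter path followed by a last edge ending at $x$; the structure of \eqref{soldis} is designed precisely so that breaking off the last edge reproduces the recursion in \eqref{HJdis}.

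\emph{First, the inequality $U(x) \ge \min_{e \in -\EE_x}\rho(U(\oo(e)),e)$.} Let $\xi$ be any path with $\tt(\xi) = x$, and write its last edge as $e_M$, so that $\tt(e_M) = x$ and hence $e_M \in -\EE_x$ in the convention of \eqref{HJdis}. Setting $\bar\xi = (e_i)_{i=1}^{M-1}$, the concatenation formula gives $\rho(f(\oo(\xi)),\xi) = \rho(\rho(f(\oo(\bar\xi)),\bar\xi),e_M)$, and since $\oo(e_M) = \tt(\bar\xi)$, the inner quantity $\rho(f(\oo(\bar\xi)),\bar\xi)$ is an admissible competitor in the defining infimum \eqref{soldis} for $U(\oo(e_M))$, so it is $\ge U(\oo(e_M))$. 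By the nondecreasing character of $\rho(\cdot,e_M)$ we obtain $\rho(f(\oo(\xi)),\xi) \ge \rho(U(\oo(e_M)),e_M) \ge \min_{e \in -\EE_x}\rho(U(\oo(e)),e)$. Taking the infimum over all such $\xi$ yields the desired inequality. One must handle separately the degenerate case in which the competing path $\xi$ is a single edge or a cycle based at $x$, where $\bar\xi$ is empty; there $f(\oo(\xi)) = f(x)$ plays the role of the inner value, and since any cycle based at $x$ is itself admissible one checks $f(x) \ge U(x)$ feeds the same estimate.

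\emph{Second, the reverse inequality.} Here I would use that the infimum defining $U(\oo(e))$ is (at least asymptotically) attained: choose, for each $e \in -\EE_x$, a path $\xi_e$ ending at $\oo(e)$ with $\rho(f(\oo(\xi_e)),\xi_e)$ within $\varepsilon$ of $U(\oo(e))$. Concatenating $\xi_e$ with the single edge $e$ produces a path ending at $x$, and by \eqref{concatena} its $\rho$-value is $\rho(\rho(f(\oo(\xi_e)),\xi_e),e)$, which by continuity and monotonicity of $\rho(\cdot,e)$ is close to $\rho(U(\oo(e)),e)$; this path being admissible in \eqref{soldis} gives $U(x) \le \rho(U(\oo(e)),e) + o(1)$, and minimizing over $e$ and letting $\varepsilon \to 0$ yields $U(x) \le \min_{e\in-\EE_x}\rho(U(\oo(e)),e)$. \textbf{The main obstacle I expect} is the question of attainment of the infimum in \eqref{soldis}: on a finite graph one can restrict to \emph{simple} paths plus at most one cycle, because any repeated cycle can only increase the $\rho$-value (this is exactly where the strict decrease of $\alpha \mapsto \rho(\alpha,\xi)-\alpha$, i.e. the cycle analysis of Corollary \ref{cycledis} and Definition \ref{defbe}, is used to argue that reinserting a cycle never helps), so the infimum is over a finite set and is in fact a minimum. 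Making this reduction precise — showing the infimum ranges effectively over finitely many paths and is attained, and that the value $U$ is finite (bounded below) — is the technical crux; once it is in place, both inequalities above become clean applications of \eqref{concatena} and Proposition \ref{precycledis}.
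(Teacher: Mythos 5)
Your two inequalities are sound, and they take a mildly different route from the paper's. The paper first proves (Proposition \ref{cyclic}) that the infimum in \eqref{soldis} is attained by a \emph{simple} path, and then uses exact minimizers in both directions: a minimizer at $\oo(e)$ concatenated with $e$ gives the subsolution inequality, and peeling the last edge off a minimizer at $x$ gives the supersolution inequality, equality being forced at the end by the subsolution property. Your argument needs no minimizers at all: you get the supersolution inequality by peeling the last edge off an \emph{arbitrary} competitor and invoking the definition of $U$ as an infimum plus the monotonicity of $\rho(\cdot,e)$ (Proposition \ref{precycledis}), and the subsolution inequality from $\varepsilon$-approximate minimizers plus the continuity of $\rho(\cdot,e)$ (Proposition \ref{preprecycledis}). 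Beyond \eqref{concatena}, monotonicity and continuity, the only external facts you use are $U\le f$ (stated in the paper right after \eqref{soldis}, and needed exactly where you use it, namely for length-one competitors) and the finiteness of $U$. That is a genuinely lighter argument than the paper's, \emph{provided} finiteness is secured.

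The gap is precisely in the step you call the crux, and the justification you sketch for it would fail. Note first that what your proof actually requires is much weaker than attainment: you only need $U(x)>-\infty$, so that $\rho(U(\oo(e)),e)$ makes sense and approximate minimizers exist. Second, your reduction argument --- ``any repeated cycle can only increase the $\rho$-value'', i.e.\ deleting a cycle from a competitor never increases its value --- is false in general. Write a competitor as $\xi=\xi_1\cup\eta\cup\xi_2$ with $\eta$ a cycle, and set $\be:=\rho(f(\oo(\xi)),\xi_1)$. If $\be>\be(\eta)$, the strict decrease of $\al\mapsto\rho(\al,\eta)-\al$ gives $\rho(\be,\eta)<\be$, and then monotonicity of $\rho(\cdot,\xi_2)$ yields
\[
\rho(\rho(\be,\eta),\xi_2)\ \le\ \rho(\be,\xi_2),
\]
which is the opposite of what you need: the path \emph{with} the cycle may be the strictly better competitor. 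This is exactly why the paper's Proposition \ref{cyclic} is a two-case argument: when $\be\ge\be(\eta)$ one does not delete the cycle but compares instead with the shorter competitor $\xi_2$ alone, started at the value $f(\oo(\xi_2))\le\be(\eta)$, using $\rho(\be,\eta)\ge\rho(\be(\eta),\eta)=\be(\eta)$. So your finite-set reduction, as stated, does not go through (its conclusion is true, but only via this case analysis). The good news is that your architecture does not need it at all: finiteness of $U$ follows from the uniform bound of the paper's Proposition \ref{presiste}. Setting $\ov\al=-\frac{1}{\la}\max_{e,s}H_{\Psi(e)}(s,0)$, an easy induction on the length of paths gives $\rho(\ov\al,\xi)\ge\ov\al$ for every path $\xi$; hence $\be(\zeta)\ge\ov\al$ for every cycle $\zeta$, so $f\ge\ov\al$, and then monotonicity gives $\rho(f(\oo(\xi)),\xi)\ge\rho(\ov\al,\xi)\ge\ov\al$, i.e.\ $U\ge\ov\al$. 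With that single lemma supplied in place of your attainment claim, your proof is complete.
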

\smallskip

The rest of the section is devoted to the deduction of some
properties of $f$ and $U$, and to the proof of Theorem
\ref{esiste}.

\smallskip

\begin{Proposition}\label{presiste} We have
\begin{equation}\label{corva0}
  - \frac 1\la \, \max\limits_{e,s} H_{\Psi(e)}(s,0) \leq  f(x) \leq \min_{e \in \EE_x}\un\al(e)   \qquad\hbox{for any $x \in \VV$.}
\end{equation}
\end{Proposition}
\begin{proof} The rightmost inequality of the formula in the statement  is a direct consequence of the
definition of $f$ and Proposition \ref{postcycledis}.
We set $\ov\al = - \frac 1\la \, \max\limits_{e,s} H_{\Psi(e)}(s,0)$, and claim that
\begin{equation}\label{corva1}
 \rho(\ov\al,\xi) \geq \ov\al \qquad\hbox{for any path $\xi$.}
\end{equation}
Were the claim true, we derive from it, because of the strict monotonicity of $\al \mapsto \rho(\al,\xi) - \al$, $\be(\xi) \geq \ov\al$ for
any cycle $\xi$. This in turn implies the leftmost inequality in \eqref{corva0}. We prove \eqref{corva1} arguing inductively on the length of
paths.  It is true if the length is $1$ in force of Corollary`\ref{corvaralfa}. We take a general path $\xi=(e_i)_{i=1}^M$ and set
$\ov\xi=(e_i)_{i=1}^{M-1}$.  By inductive step $\rho(\ov\al,\bar\xi) \geq \ov\al$  and $\rho(\ov\al,e_M) \geq \ov\al$. Exploiting the
monotonicity of  $\rho(\cdot,e_M)$, we have
\[ \rho(\ov\al,\xi)= \rho(\rho(\ov\al,\bar\xi),e_M) \geq \rho(\ov\al,e_M) \geq \ov\al.\]
This  concludes the proof.
\end{proof}

\smallskip

\begin{Proposition}\label{cyclic}   The infimum in the definition of $U$ is realized by  a simple path  with terminal vertex $x$, for any $x
\in \VV$.
\end{Proposition}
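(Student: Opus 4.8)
The plan is to show that the infimum defining $U(x)$ is finite and is attained along a path without repeated vertices, via a length‑reduction argument: I will check that any non‑simple competitor can be replaced by a strictly shorter one whose $\rho$‑value is no larger, and then invoke the finiteness of the set of simple paths in the finite graph $\XX$. The two ingredients are the monotonicity relations of Proposition \ref{precycledis} and the characterisation of $\be(\cdot)$ as a fixed point in Definition \ref{defbe}, together with the elementary bound $f(y)\le\be(c)$ for every cycle $c$ based at $y$, immediate from the definition of $f$. Finiteness of the infimum is recorded first: since $f\ge\ov\al:=-\frac1\la\max_{e,s}H_{\Psi(e)}(s,0)$ by Proposition \ref{presiste}, and $\rho(\ov\al,\xi)\ge\ov\al$ for every path $\xi$ (established in that same proof), the monotonicity of $\rho(\cdot,\xi)$ gives $\rho(f(\oo(\xi)),\xi)\ge\ov\al$, so $U(x)\ge\ov\al>-\infty$.

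For the reduction, let $\xi$ be a competitor with $\oo(\xi)=z$ that is not simple, and let $y$ be its first repeated vertex. I write $\xi=\xi_1\cup c\cup\xi_2$, where $\xi_1$ is the simple initial arc from $z$ to $y$, $c$ is the cycle based at $y$ created by the repetition, and $\xi_2$ runs from $y$ to $x$; setting $\al_1=\rho(f(z),\xi_1)$, the concatenation formula \eqref{concatena} gives $\rho(f(z),\xi)=\rho(\rho(\al_1,c),\xi_2)$. I then split according to the position of $\al_1$ relative to the fixed point $\be(c)$. If $\al_1<\be(c)$, Proposition \ref{precycledis} {\bf ii)} yields $\rho(\al_1,c)>\al_1$, so deleting the loop and keeping $\xi':=\xi_1\cup\xi_2$ gives $\rho(f(z),\xi')=\rho(\al_1,\xi_2)\le\rho(\rho(\al_1,c),\xi_2)=\rho(f(z),\xi)$ by monotonicity of $\rho(\cdot,\xi_2)$. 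If instead $\al_1\ge\be(c)$, then $\rho(\al_1,c)\ge\be(c)\ge f(y)$ by Proposition \ref{precycledis} {\bf i)} (with $\rho(\be(c),c)=\be(c)$) and the bound on $f$, whence $\rho(f(z),\xi)\ge\rho(f(y),\xi_2)$; here the replacement is the tail $\xi_2$, now read as a path issued from $y$ with initial value $f(y)$, which is again admissible for \eqref{soldis}. In both cases I produce a strictly shorter admissible competitor whose value does not exceed that of $\xi$.

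Since the length drops at each step and is bounded below, after finitely many reductions I reach a simple path, and as there are only finitely many simple paths ending at $x$, the infimum is attained. The one point demanding care is the degenerate case in which the repeated vertex $y$ coincides with the terminal $x$ and $\xi_2$ is trivial: when $\al_1\ge\be(c)$ the reduction cannot shorten further, but there $\rho(f(z),\xi)=\rho(\al_1,c)\ge\be(c)\ge f(x)$, so such competitors never undercut $f(x)$. Combined with $U(x)\le f(x)$, this confines the residual case to checking that the value $f(x)$ itself is realised among the admissible simple competitors terminating at $x$, which is where the convention on one‑vertex (or minimal cyclic) paths at $x$ enters. I expect this terminal‑vertex bookkeeping, rather than the cycle‑removal dichotomy, to be the only delicate point of the argument.
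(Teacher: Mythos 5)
Your reduction dichotomy is exactly the one the paper uses (your Case A is the paper's case $\rho(\cdot,\xi_1)<\be(\eta)$, your Case B the other one), and both generic cases are argued correctly with the same tools, Proposition \ref{precycledis} and the fixed point of Definition \ref{defbe}. The problem is the terminal degenerate case, which you flag but do not close: you end by saying you ``expect'' the terminal--vertex bookkeeping to work out via a ``convention on one--vertex (or minimal cyclic) paths at $x$''. No such convention exists in this paper: every path has at least one edge, so $f(x)$ itself is not the value of any competitor in \eqref{soldis}, and the obligation you are actually left with is not that $f(x)$ be realised but that the minimum $m$ over simple competitors satisfy $m\le f(x)$ --- which your argument never establishes. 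As written, if one had $m>f(x)$, your reductions would only yield $U(x)=f(x)$ with every simple competitor strictly above the infimum, and the proposition would remain unproved. So this is a genuine gap, not bookkeeping.

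The gap can be closed by one more application of the same monotonicity, inside your own decomposition. In your degenerate case you have $\xi=\xi_1\cup c$ with $\xi_1$ nonempty and $c$ a cycle based at $x$; since $y=x$ is the \emph{first} repeated vertex, $c$ has no interior repetitions, i.e.\ $c$ is a circuit, hence itself one of the finitely many simple admissible competitors for \eqref{soldis}. Moreover in Case B you know $\al_1\ge\be(c)\ge f(x)$ (the last inequality by the definition of $f$), so Proposition \ref{precycledis} {\bf i)} gives
\[
\rho(f(z),\xi)=\rho(\al_1,c)\;\ge\;\rho(f(x),c)=\rho(f(\oo(c)),c),
\]
i.e.\ $\xi$ is dominated by the strictly shorter simple competitor $c$, and your induction on length closes with no residual case. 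For comparison, the paper is also terse exactly here: it treats in detail only the cycles satisfying \eqref{cycclic1} and dismisses the remaining ones with ``by slightly adapting the argument''; the adaptation is precisely the display above (with a tail $\xi_2$ appended when present). With this one line added, your proof is complete and coincides in substance with the paper's.
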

\begin{proof}
 We fix $x$ and a  path   $\xi$ with terminal vertex    $x$, and set, to ease notation, $\ov\al=f(\oo(\xi))$. Let us assume that there is a
 cycle $\eta$
properly contained in $\xi$ with
\begin{equation}\label{cycclic1}
\oo(\eta) \neq \oo(\xi)  \qquad\hbox{and} \qquad \tt(\eta) \neq \tt(\xi).
\end{equation}
The path $\xi$ can be consequently written in the form
\[\xi=\xi_1 \cup \eta \cup \xi_2\]
where $\xi_1$, $\xi_2$, $\eta$  satisfy $\tt(\xi_1)=\oo(\xi_2)= \oo(\eta)$. We have by the concatenation formula
\begin{equation}\label{cycclic2}
 \rho(\ov\al,\xi)= \rho(\rho(\rho(\ov\al),\xi_1),\eta),\xi_2).
\end{equation}
If  $\rho(\ov\al,\xi_1) \geq \be(\eta)$
then  by the usual monotonicity property
\[\rho(\rho(\ov\al,\xi_1),\eta) \geq \rho(\be(\eta),\eta) =\be(\eta)\]
which implies, taking also into account   \eqref{cycclic2}  and the definition of $\ov\al$
\begin{equation}\label{cycclic3}
\rho(f(\oo(\xi)),\xi) \geq \rho(\be(\eta),\xi_2) \geq \rho(f(\oo(\xi_2)),\xi_2)).
\end{equation}
If instead   $\rho(\ov\al,\xi_1) < \be(\eta)$, then by the strict monotonicity of $\al \mapsto \rho( \al,\eta) -\al$, we have
\[\rho(\rho(\ov\al,\xi_1),\eta) > \rho(\ov\al,\xi_1)\]
and by  \eqref{cycclic2} and the definition of $\ov\al$, we further get
\begin{equation}\label{cycclic4}
 \rho(f(\oo(\xi)),\xi) > \rho(\rho(\ov\al,\xi_1),\xi_2)= \rho(\ov\al, \xi_1 \cup \xi_2)= \rho(f(\oo(\xi_1\cup\xi_2), \xi_1 \cup \xi_2).
\end{equation}
Taking into account  \eqref{cycclic3}   \eqref{cycclic4},  we realize that the cycle $\eta$ can be removed without affecting  the infimum in
the definition of $U(x)$.  By slightly adapting the argument, we reach the same conclusion getting rid of condition \eqref{cycclic1}. The
procedure can be repeated for all other cycle properly contained in $\xi$.  We therefore  see that
\[U(x)= \min \{  \rho(f(\oo(\zeta)), \zeta) \mid \zeta
\;\hbox{simple path with $\tt(\zeta)=x$}\},\]
where the minimum in the above formula is justified by the fact that the simple paths are finite.
This ends the proof.

\end{proof}

\smallskip
 By following the same argument as in Proposition \ref{cyclic} we can also show

\begin{Corollary} \label{corcyclic} Assume that for a given $x$
\[U(x)= \rho(f(\oo(\xi)),\xi) \qquad\hbox{for some path $\xi$ with $\tt(\xi)=x$.}\]
Then there exists a simple path $\zeta$ with $\oo(\zeta)= \oo(\xi)$, $\tt(\zeta)=y$ such that
\[U(x)= \rho(f(\oo(\zeta),\zeta).\]
\end{Corollary}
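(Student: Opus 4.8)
Set $\ov\al = f(\oo(\xi))$, so that the hypothesis reads $U(x) = \rho(\ov\al,\xi)$, and recall from Proposition \ref{precycledis} that for every path $\zeta$ the map $\al \mapsto \rho(\al,\zeta)$ is nondecreasing while $\al \mapsto \rho(\al,\zeta) - \al$ is strictly decreasing. The plan is to rerun the cycle–excision scheme of Proposition \ref{cyclic}, now started from the minimizing path $\xi$. If $\xi$ is already simple there is nothing to prove, so I assume $\xi$ repeats a vertex and, exactly as in Proposition \ref{cyclic}, write $\xi = \xi_1 \cup \eta \cup \xi_2$ with $\eta$ a cycle based at $\tt(\xi_1) = \oo(\xi_2)$.

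First I would reproduce the dichotomy \eqref{cycclic3}–\eqref{cycclic4}, driven by the sign of $\rho(\ov\al,\xi_1) - \be(\eta)$: via the concatenation identity \eqref{concatena} and the monotonicity just recalled, excising the cycle yields a shorter path whose value does not exceed $\rho(\ov\al,\xi)$. The one new ingredient, absent in Proposition \ref{cyclic}, is that $\xi$ realizes the infimum $U(x)$: the excised path still terminates at $x$, hence its value is also $\ge U(x)$ by the definition of $U$, so the inequality collapses to an equality and the shortened path is again a minimizer. Iterating strictly lowers the edge count, and since there are only finitely many simple paths the procedure halts at a simple $\zeta$ still realizing $U(x)$, giving $\rho(f(\oo(\zeta)),\zeta) = U(x)$ with the free terminal vertex named $y$.

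The main obstacle is purely the endpoint bookkeeping. For a minimizing $\xi$ the value–preserving excisions of Proposition \ref{cyclic} tend to keep the \emph{terminal} vertex $x$ and let the initial vertex drift, whereas here one must instead force the \emph{initial} vertex $\oo(\xi)$ to survive every step. The clean way to arrange this is to pass to the reversed path $-\xi$, intertwining $\rho$ and the fixed–point values $\be$ through the compatibility \eqref{ovgamma}, Remark \ref{remfork} and Proposition \ref{fondadis}: the reversal swaps the roles of $\oo$ and $\tt$, so that the excision of Proposition \ref{cyclic} applied to the reversed data pins down $\oo(\xi)$ and relinquishes the terminal endpoint to $y$. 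Verifying that this reversal correctly transports the functional $\rho(\cdot,\cdot)$ and the quantities $\be(\eta)$, so that a simple minimizer for the reversed problem pulls back to the asserted $\zeta$, is the only genuine departure from the proof of Proposition \ref{cyclic}.
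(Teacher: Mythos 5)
Your opening two paragraphs do reproduce the paper's intended argument, but notice what that argument actually delivers. In the decomposition $\xi=\xi_1\cup\eta\cup\xi_2$, minimality of $\xi$ rules out the alternative \eqref{cycclic4}, since it would produce a path ending at $x$ with value strictly below $U(x)$, against \eqref{soldis}; so one is always in the case $\rho(f(\oo(\xi)),\xi_1)\geq\be(\eta)$, where \eqref{cycclic3} collapses to the chain $U(x)=\rho(f(\oo(\xi)),\xi)\geq\rho(\be(\eta),\xi_2)\geq\rho(f(\oo(\xi_2)),\xi_2)\geq U(x)$. The path that survives with the correct value is therefore the tail $\xi_2$: each excision keeps the terminal vertex $x$ and moves the origin to $\oo(\eta)$. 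Iterating yields a simple path $\zeta$ with $\tt(\zeta)=x$, origin some vertex $y$ visited by $\xi$, and $U(x)=\rho(f(\oo(\zeta)),\zeta)$; this is what ``the same argument as Proposition \ref{cyclic}'' proves, and it is the version of the conclusion one should retain (the otherwise undeclared vertex $y$ is the new origin). Pinning the origin instead is not a matter of bookkeeping: the origin-preserving excision keeps $\xi_1\cup\xi_2$, and since the cycle is entered at or above its fixed point, traversing $\eta$ lowers the value, so by Proposition \ref{precycledis} one only gets $\rho(f(\oo(\xi)),\xi_1\cup\xi_2)\geq U(x)$; as $\rho(\cdot,\xi_2)$ is merely nondecreasing --- it can be locally constant, e.g.\ it saturates at the value $\ov\al$ by Corollary \ref{corcharabis} --- this inequality cannot be promoted to an equality.

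The reversal trick you propose to force the origin to survive is the genuine gap. The functional $\rho$ does not transform under $\xi\mapsto-\xi$: the maps $\rho(\cdot,e)$ and $\rho(\cdot,-e)$ are both nondecreasing and both saturate, so they are in no sense inverse to one another; Propositions \ref{fondadis} and \ref{postcycledis} relate them only at the distinguished values $\un\al(e)$, $\ov\al(e)$ (indeed $\rho(\rho(\al,e),-e)=\un\al(e)$ for every $\al\geq\un\al(e)$, so the composition is constant on a half-line), and there is no identity expressing $\rho(\al,\xi)$ through $\rho(\cdot,-\xi)$. More fundamentally, the hypothesis you must transport is minimality, and it does not transport: by Remark \ref{remfork}, $\rho(\cdot,-\xi)$ propagates a datum from $x$ towards $\oo(\xi)$ along the reversed arcs, so it is the object entering formula \eqref{soldis} for $U(\oo(\xi))$, about which the assumption $U(x)=\rho(f(\oo(\xi)),\xi)$ says nothing --- the reversed path need not be minimizing for $U(\oo(\xi))$, and $\rho(f(x),-\xi)$ bears no relation to $U(x)$. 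Hence the excision of Proposition \ref{cyclic} cannot be run ``on the reversed data'', and the verification you defer in your last sentence is not a routine transport of formulas: it is exactly the point where the proof breaks down.
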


\smallskip

\begin{proof} ({\bf of Theorem \ref{esiste}}) \;
  We fix $x \in \VV$ and  $e \in - \EE_x$.  By Proposition \ref{cyclic}, there is a simple path $\xi$ ending at $\oo(e)$ with
\[  U(\oo(e)) =   \rho(f(\oo(\xi)),\xi).\]
By the very definition of $U$ and  the concatenation principle \eqref{concatena}, we have
\[ U(x) \leq    \rho(f(\oo(\xi)),\xi \cup e)= \rho(U(\oo(e),e).\]
 This shows that $U$ is subsolution.
Taking  again into account Proposition  \ref{cyclic},  we proceed  denoting by $\eta= (e_i)_{i=1}^M$  a simple path with terminal
point $x$ satisfying
\[ U(x) =  \rho(f(\oo(\eta),\eta).\]
  We set
$\bar\eta= (e_i)_{i=1}^{M-1}$, and derive from   concatenation
formula, monotonicity and definition of $U$
\[ U(x) = \rho(\rho(f(\oo(\eta), \bar\eta),e_M) \geq \rho(U(\oo(e_M)),e_M)\]
Knowing that $U$ is subsolution and $e_M \in - \EE_x$,
equality must prevail in the above formula, showing that $U$ is
actually a solution, as was claimed.

\end{proof}

\smallskip
\begin{Remark}\label{loop2} If $e$ is a loop with vertex $x$ then clearly $U(x) \leq \be(e)= \be(-e)$, see Remark \ref{loop1}, if there is a
strict inequality then the edge $e$ (resp.the arc $\Psi(e)$) can be removed from the graph (resp. from the network) without affecting  the
solution of \eqref{HJdis} (resp.  the solution of \eqref{HJ} on the  arcs different from $\Psi(e)$). A similar phenomenon takes place for the
Eikonal equation on graphs/networks, see Remark 6.17 in \cite{SiconolfiSorrentino}.

\end{Remark}

\smallskip

Combining the previous result with Theorems \ref{theo:ponte} and \ref{theo:discunisol} we get
\begin{Theorem}
There is one and only one solution to \eqref{HJ}, and its restriction  to  $\VV$  coincide with the function
$U$ defined in \eqref{soldis}.
\end{Theorem}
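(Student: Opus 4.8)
The plan is to assemble the final uniqueness-and-representation theorem directly from the machinery already in place, so the argument is essentially a chaining of earlier results rather than a fresh construction. First I would invoke Theorem~\ref{esiste}, which guarantees that the function $U$ defined by the representation formula \eqref{soldis} is a genuine solution of the discrete functional equation \eqref{HJdis}. This settles \emph{existence} at the discrete level. Next I would apply Theorem~\ref{theo:discunisol} (itself a consequence of the comparison principle, Theorem~\ref{sol!}) to conclude that this $U$ is the \emph{only} solution of \eqref{HJdis}.

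Having pinned down the unique discrete solution, I would transfer the statement to the network via the bridge results. By the first half of Theorem~\ref{theo:ponte}, the solution $U$ of \eqref{HJdis} extends uniquely to a solution $u$ of the differential problem \eqref{HJ} on $\Gamma$; this yields existence of a solution to \eqref{HJ}. For uniqueness on the network, I would argue as follows: if $u_1$ and $u_2$ were two solutions of \eqref{HJ}, then by the converse half of Theorem~\ref{theo:ponte} their traces $u_1|_\VV$ and $u_2|_\VV$ would both solve \eqref{HJdis}, hence both equal $U$ by Theorem~\ref{theo:discunisol}; the uniqueness of the extension in Theorem~\ref{theo:ponte} then forces $u_1=u_2$ on all of $\Gamma$. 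Equivalently, one can cite the comparison principle for \eqref{HJ}, Theorem~\ref{compara}, applied in both directions to get $u_1\le u_2$ and $u_2\le u_1$.

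The statement's final clause, that the restriction of the unique network solution coincides with $U$, is then immediate: the solution $u$ produced above restricts on $\VV$ to precisely the $U$ of \eqref{soldis}, by construction of the extension in Theorem~\ref{theo:ponte}, and uniqueness means no other solution can yield a different trace.

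I do not expect any serious obstacle here, since all the analytic heavy lifting — the comparison principle obtained by the combinatorial argument in Theorem~\ref{sol!}, and the equivalence between the two problems in Theorem~\ref{theo:ponte} — has already been carried out. The only point requiring a modicum of care is keeping the logical flow clean: one must be explicit that \emph{existence} comes from Theorem~\ref{esiste} together with the extension in Theorem~\ref{theo:ponte}, while \emph{uniqueness} comes from Theorem~\ref{theo:discunisol} (or Theorem~\ref{compara}), and that the identification of the trace with $U$ is simply a restatement of how the extension was built. The proof is therefore a short two- or three-sentence synthesis citing Theorems~\ref{theo:ponte}, \ref{theo:discunisol}, and \ref{esiste}.
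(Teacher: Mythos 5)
Your proposal is correct and takes essentially the same route as the paper, which proves this theorem simply by the one-line remark that it follows from combining Theorem~\ref{esiste} with Theorems~\ref{theo:ponte} and~\ref{theo:discunisol}. Your more explicit spelling-out of the uniqueness step (traces of any two solutions of \eqref{HJ} solve \eqref{HJdis}, hence coincide with $U$, and the uniqueness of the extension in Theorem~\ref{theo:ponte}, or alternatively Theorem~\ref{compara}, then identifies the solutions) is exactly what the paper leaves implicit.
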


\bigskip

\section{$\la$--Aubry sets}\label{lambdaubry}

We define in this section  the $\la$--Aubry sets, an analogue to the Aubry sets introduced   for the Eikonal problem, see Section
\ref{asymptotic}. These  sets allow writing a new representation formula for solutions to \eqref{HJdis}, and will play a  role in the
asymptotic problem we will deal with in the next section.

\begin{Definition}

The \emph{(projected) $\la$--Aubry set} is given by
\[\A_\la= \{ y \in \VV \mid U(y)= \be(\xi) \; \hbox{for some cycle $\xi$ based on $y$.}\}
\]
\end{Definition}

\smallskip

\begin{Proposition}\label{spring} Given $y \in \A_\la$, then any  cycle $\xi=(e_i)_{i=1}^M$  based on $y$ with $U(y)= \be(\xi)$ satisfies
\begin{eqnarray}
  U(\oo(e_j)) &=& \be \big ( (e_i)_{i=j}^M \cup (e_i)_{i=1}^{j-1} \big )  \label{spring1} \\
  U(\oo(e_j)) &=&  \rho \left (U(\oo(e_k)), (e_i)_{i=k}^{ j-1} \right ) \label{spring11}
\end{eqnarray}
for any $j, \, k =1, \cdots, M$, $k \leq j$.
\end{Proposition}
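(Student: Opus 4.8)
The plan is to reduce both identities to a single family of ``one--step'' equalities along the cycle, and then to extract them from the rigidity caused by $U(y)$ being exactly the fixed point $\be(\xi)$.

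First I would record a rotation formula for $\be$. Writing $A_j=(e_i)_{i=1}^{j-1}$ and $B_j=(e_i)_{i=j}^{M}$, so that $\xi=A_j\cup B_j$ and the rotated cycle in \eqref{spring1} is $B_j\cup A_j$, I set $a:=\rho(\be(\xi),A_j)$. Then $\rho(a,B_j)=\rho(\be(\xi),A_j\cup B_j)=\rho(\be(\xi),\xi)=\be(\xi)$, so by the concatenation formula \eqref{concatena}
\[\rho(a,B_j\cup A_j)=\rho\big(\rho(a,B_j),A_j\big)=\rho(\be(\xi),A_j)=a.\]
By uniqueness of the fixed point (Corollary \ref{cycledis}) this gives $\be(B_j\cup A_j)=\rho(\be(\xi),A_j)$. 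Hence \eqref{spring1} is equivalent to $U(\oo(e_j))=\rho(U(y),A_j)$, and by the concatenation formula the whole family \eqref{spring11} follows at once from the consecutive equalities
\[U(\tt(e_i))=\rho\big(U(\oo(e_i)),e_i\big),\qquad i=1,\dots,M,\]
using $\tt(e_i)=\oo(e_{i+1})$ and $\tt(e_M)=y$. Thus everything reduces to these one--step equalities.

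The inequality ``$\le$'' in each one--step relation is immediate: $U$ is a subsolution and $e_i\in-\EE_{\tt(e_i)}$, so $U(\tt(e_i))\le\rho(U(\oo(e_i)),e_i)$. Chaining these through the monotonicity of $\rho(\cdot,e)$ (Proposition \ref{precycledis}) and closing the loop gives $U(y)\le\rho(U(y),\xi)$; but $U(y)=\be(\xi)$ is the fixed point, so $\rho(U(y),\xi)=U(y)$ and the chain is squeezed to an equality \emph{at its two ends}. Setting $\delta_i:=\rho(U(\oo(e_i)),e_i)-U(\tt(e_i))\ge0$, I would then try to turn this aggregate equality into termwise equalities $\delta_i=0$, using the strict monotonicity of $\al\mapsto\rho(\al,e_i)-\al$ (Proposition \ref{varalfa} {\bf ii)}).

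The delicate point -- and the main obstacle -- is precisely that the edge maps are only nondecreasing, not strictly increasing (indeed $\rho(\cdot,e)$ is constant, equal to $\ov\al(e)$, for arguments $\ge\un\al(e)$), so propagating the gap $\rho(U(y),A_j)-U(\oo(e_j))$ around the cycle can stall on a flat interval. To close this I would bring in the representation formula and the minimality of the witnessing cycle: from $U(y)=\be(\xi)$ and $U(y)\le f(y)\le\be(\xi)$ one gets $f(y)=\be(\xi)$, i.e.\ $\xi$ realizes $f(y)$. Splicing an arbitrary cycle $\zeta$ based at $\oo(e_j)$ into the cycle $A_j\cup\zeta\cup B_j$ based at $y$ and comparing $\be$ of the two cycles via the computation above, the minimality $\be(A_j\cup\zeta\cup B_j)\ge\be(\xi)$ together with strict monotonicity should yield $\be(B_j\cup A_j)\le\be(\zeta)$, hence $f(\oo(e_j))=\be(B_j\cup A_j)$. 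Finally, realizing $U(\oo(e_j))$ by a simple path (Proposition \ref{cyclic}) and extending that path by $B_j$ to reach $y$, the bound coming from $U(y)=\be(\xi)$ and strict monotonicity would pin $U(\oo(e_j))=f(\oo(e_j))=\be(B_j\cup A_j)=\rho(U(y),A_j)$, the missing lower bound. Combined with the easy ``$\le$'' direction this gives all the one--step equalities, and with the rotation formula of the first paragraph, both \eqref{spring1} and \eqref{spring11}.
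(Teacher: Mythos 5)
Your first two paragraphs are correct, and in fact tidier than the paper's own argument: the rotation identity $\be(B_j\cup A_j)=\rho(\be(\xi),A_j)$, proved through uniqueness of the fixed point, together with the reduction of \eqref{spring1}--\eqref{spring11} to the one--step equalities $U(\tt(e_i))=\rho\big(U(\oo(e_i)),e_i\big)$ and the easy inequality $U(\tt(e_i))\le\rho\big(U(\oo(e_i)),e_i\big)$ coming from the subsolution property, is exactly the right skeleton. The gap is your third paragraph: none of its key claims is proved there, and none can be proved. The fact that the chain of one--step inequalities closes up into the fixed--point identity $\rho(U(y),\xi)=U(y)$ does \emph{not} force the individual gaps $\delta_i$ to vanish, for precisely the reason you flag: $\rho(\cdot,e)$ is constant, equal to $\ov\al(e)$, on $[\un\al(e),+\infty)$, so a strictly positive $\delta_i$ can be absorbed by the flat part of a later edge map. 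Your proposed repair also breaks down: the intermediate claim $f(\oo(e_j))=\be(B_j\cup A_j)$ is false in general, and no argument of this kind can succeed, because the statement quantified over \emph{any} witnessing cycle is itself false.

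Here is a counterexample within the paper's assumptions. Take two vertices $y,z$ joined by two arcs, one carrying $H(s,p)=|p|-c_1$, the other $H(s,p)=|p|-c_2$, with $0<c_1<c_2$; let $e$ be the edge from $z$ to $y$ on the cheap arc and $d$ the edge from $y$ to $z$ on the expensive one. A direct computation gives $\rho(\al,\pm e)=\min\{c_1/\la,\ e^{-\la}\al+(1-e^{-\la})c_1/\la\}$, and the same formula with $c_2$ for $\pm d$; the unique solution of \eqref{HJdis} is $U\equiv c_1/\la$. The cycle $\xi=(d,e)$ based on $y$ satisfies $\be(\xi)=c_1/\la=U(y)$, because the expensive leg lands in the flat region of $\rho(\cdot,e)$; so $\xi$ is an admissible cycle for the Proposition, yet
\[
\rho(U(y),d)\;=\;\be\big((e,d)\big)\;=\;e^{-\la}\,c_1/\la+(1-e^{-\la})\,c_2/\la\;>\;c_1/\la\;=\;U(z),
\]
so both \eqref{spring1} and \eqref{spring11} fail at $j=2$. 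Note that the paper's own proof stumbles on the very same point: deriving $\rho(U(y),\eta)\le U(\oo(e_M))$ from \eqref{spring2}, \eqref{spring20} and ``monotonicity'' would require $\rho(\cdot,e_M)$ to be injective, which it is not, and in the example above that inequality is exactly what fails. What survives, and what Corollary \ref{corspring} and Proposition \ref{superpoor} actually need, is the existential version: every $y\in\A_\la$ carries \emph{some} witnessing cycle, namely the one produced by the backward iteration in the proof of Proposition \ref{prop:lambdaAubry}, along which every edge realizes the minimum in \eqref{HJdis}; for such cycles your one--step equalities hold by construction, and your first paragraph then yields \eqref{spring1} and \eqref{spring11}. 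So your diagnosis of the obstacle is exactly right, but it cannot be circumvented; only the statement can be weakened.
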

\begin{proof} We start proving \eqref{spring1}.    Taking into account that $U$ is solution to \eqref{HJdis}, we have
\begin{equation}\label{spring2}
 U(y) \leq \rho(U(\oo(e_M)),e_M).
\end{equation}
We set $\eta= (e_i)_{i=1}^{M-1}$, $\zeta= e_M \cup \eta$, it is clear that $\zeta$ is a cycle based on $\oo(e_M)$. By the concatenation
formula
\begin{equation}\label{spring20}
 U(y)= \rho(U(y), \xi)= \rho(\rho(U(y), \eta),e_M).
\end{equation}
We then derive from \eqref{spring2}, \eqref{spring20}  and the monotonicity of $\rho(\cdot, e_M)$
\[\rho(U(y),\eta) \leq U(\oo(e_M))\]
which in turn implies, due to  $ U(y)= \be(\xi) \geq f(y)$,
\[\rho(f(y),\eta) \leq \rho(U(y), \eta) \leq  U(\oo(e_M))\]
We then have  by the very definition of $U$, and  \eqref{spring20}
\begin{equation}\label{spring100}
 U(\oo(e_M)) =  \rho(U(y), \eta) \qquad\hbox{and} \qquad U(y) = \rho(U(\oo(e_M)),e_M).
\end{equation}
We finally get
\[\rho(U(\oo(e_M)), \zeta)= \rho(\rho(U(\oo(e_M)), e_M), \eta)= \rho(U(y),\eta)= U(\oo(e_M))\]
and consequently
\[ U(\oo(e_M) = \be(\zeta)\]
or, in other term, formula \eqref{spring1} with $j=M$.  It can be extended to all $j$ by iterating backward the above argument.

We proceed proving \eqref{spring11}. We set
\[\al =  \rho \left (U(\oo(e_k)), (e_i)_{i=k}^{ j-1} \right ).\]
By \eqref{spring1} we have
\[ \rho \left (\al, (e_i)_{i=j}^M \cup (e_i)_{i=1}^{k-1} \right ) = U(\oo(e_k))\]
and accordingly by the concatenation formula
\begin{eqnarray*}
  \rho \left ( \al, (e_i)_{i=j}^M \cup (e_i)_{i=1}^{j-1} \right )  &=&  \rho \left (  \rho \left (\al, (e_i)_{i=j}^M \cup (e_i)_{i=1}^{k-1}
  \right ), (e_i)_{i=k}^{ j-1} \right ) \\
   &=& \rho \left (U(\oo(e_k)), (e_i)_{i=k}^{ j-1} \right )= \al.
\end{eqnarray*}
This implies  by \eqref{spring1} that $\al = U(\oo(e_j))$, as was claimed.

\end{proof}

The above assertion can be slightly strengthen.

\begin{Corollary}\label{corspring} Given $y \in \A_\la$, there exists a circuit $\zeta$  based on $y$ with $U(y)= \beta(\zeta)$. It therefore
enjoys  the same properties stated for $\xi$  in Proposition \ref{spring}.
\end{Corollary}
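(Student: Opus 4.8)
The plan is to mimic the sub-cycle excision used in the proof of Proposition \ref{cyclic}, but now keeping the base vertex $y$ fixed and exploiting the rigidity supplied by Proposition \ref{spring}. Since $y \in \A_\la$, we may start from a cycle $\xi=(e_i)_{i=1}^M$ based on $y$ with $U(y)=\be(\xi)$, and argue by descent on the length $M$. If $\xi$ visits no vertex twice apart from $y$ at its two endpoints, it is already a circuit and we are done. Otherwise some interior vertex is repeated, say $\oo(e_k)=\oo(e_l)$ with $k<l$, so that $\eta=(e_i)_{i=k}^{l-1}$ is a proper sub-cycle based on $v:=\oo(e_k)$.

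The first key step is to observe that $U(v)$ is exactly the fixed point of $\rho(\cdot,\eta)$. This is immediate from the propagation identity \eqref{spring11}: taking there $j=l$ and recalling $\oo(e_l)=\oo(e_k)=v$ gives $U(v)=\rho(U(v),\eta)$, whence $U(v)=\be(\eta)$ by the uniqueness of the fixed point guaranteed by Corollary \ref{cycledis}.

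The second step is the excision. Writing $\xi=\xi_1\cup\eta\cup\xi_2$ with $\xi_1=(e_i)_{i=1}^{k-1}$ and $\xi_2=(e_i)_{i=l}^M$, formula \eqref{spring11} (with the pair of indices $1,k$) also yields $\rho(U(y),\xi_1)=U(v)$, while the previous step gives $\rho(U(v),\eta)=U(v)$. Combining these through the concatenation formula \eqref{concatena} we get $\rho(U(y),\xi_1)=\rho(U(y),\xi_1\cup\eta)$, and therefore
\[\rho(U(y),\xi_1\cup\xi_2)=\rho(U(v),\xi_2)=\rho(U(y),\xi)=U(y).\]
Thus $U(y)$ is a fixed point of $\rho(\cdot,\xi_1\cup\xi_2)$, and since $\xi_1\cup\xi_2$ is again a cycle based on $y$, Corollary \ref{cycledis} forces $\be(\xi_1\cup\xi_2)=U(y)$. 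We have produced a strictly shorter cycle based on $y$ that still realizes the Aubry value. When the repeated vertex is $y$ itself (the degenerate case $k=1$, in which $\xi_1$ is empty), the same computation reduces to splitting $\xi$ into two cycles based on $y$ and retaining one of them, exactly as in the closing remark of the proof of Proposition \ref{cyclic}.

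Iterating this descent, which must terminate because $M$ strictly decreases at each removal, we reach a cycle $\zeta$ based on $y$ with no repeated interior vertex, i.e.\ a circuit, still satisfying $U(y)=\be(\zeta)$. Being a cycle, $\zeta$ automatically inherits all the identities established in Proposition \ref{spring}, which is the final clause of the statement. I expect the only delicate point to be the bookkeeping in the degenerate case where $y$ recurs inside $\xi$; this is handled precisely as in Proposition \ref{cyclic} and carries no real difficulty, since the whole substance of the argument is concentrated in the fixed-point identity $U(v)=\be(\eta)$ extracted from \eqref{spring11}.
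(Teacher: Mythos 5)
Your proposal is correct and follows essentially the same route as the paper: both excise proper sub-cycles from $\xi$ by combining the propagation identity \eqref{spring11} with the concatenation formula \eqref{concatena} and the uniqueness of the fixed point from Corollary \ref{cycledis}, iterating (with the degenerate case of a repeated base vertex handled separately) until a circuit based on $y$ remains. The only cosmetic difference is that you first isolate the fixed-point identity $U(v)=\be(\eta)$ for the excised sub-cycle before removing it, whereas the paper propagates directly along $\xi_2$ back to $y$ via $U(y)=\rho(U(\oo(\xi_2)),\xi_2)$; the substance is identical.
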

\begin{proof} We adopt the same notation of Proposition \ref{cyclic}. We denote by $\xi$ a cycle based on $y$ with $U(y)= \be(\xi)$. We
assume that there is a cycle $\eta$  properly contained in $\xi$ satisfying  condition \eqref{cycclic1}. Since  $\oo(\xi_2)=\oo(\eta)$ we get
thanks to the concatenation principle and \eqref{spring11}
\[U(y)=\rho(U(\oo(\xi_2)),\xi_2)=\rho(U(\oo(\eta)),\xi_2)=\rho(U(y),\xi_1\cup\xi_2).\]
This shows that $U(y)=\be(\xi_1\cup\xi_2)$. By slightly adapting the argument, we reach the same conclusion getting rid of condition
\eqref{cycclic1}. This procedure can be repeated for all other cycles properly contained in $\xi$, and we end up with a circuit $\zeta$
satisfying the assertion.
\end{proof}

\smallskip

The next Proposition provide a further representation formula  for the solution of \eqref{HJdis} and shows  that the $\la$--Aubry sets are
nonempty. The argument is reminiscent of that of Proposition 6.15 in \cite{SiconolfiSorrentino}.

\begin{Proposition}\label{prop:lambdaAubry}
The $\la$--Aubry set is nonempty. Moreover, if $U$ is the solution of \eqref{HJdis} then the following formula holds true
\begin{equation}\label{lambdaAubry}
  U(x) = \min \{ \rho(U(y), \zeta) \mid y \in \A_\la, \, \zeta\text{ simple path that links $y$ to $x$}\}.
\end{equation}
If $y$, $\zeta= (e_i)_{i=1}^M$ realize the minimum in \eqref{lambdaAubry},we in addition have
\begin{equation}\label{lambdaAubrybis}
U(\oo(e_j))= \rho \left ( U(y), (e_i)_{i=1}^{j-1} \right ) \qquad\hbox{for any $j= 2, \cdots, M$.}
\end{equation}
\end{Proposition}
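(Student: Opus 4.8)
The plan is to read off both the $\la$--Aubry set and the representation formula from the backward dynamics carried by the solution $U$ of \eqref{HJdis}, and only at the end to establish the optimality of the sub-paths of a minimizing path.

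\emph{Nonemptiness of $\A_\la$.} Since $U$ solves \eqref{HJdis}, at every vertex the minimum defining it is attained at some edge of $-\EE_x$. Starting from an arbitrary $x_0=v_0$ I would iterate this: choose $e_k\in-\EE_{v_k}$ with $U(v_k)=\rho(U(\oo(e_k)),e_k)$ and set $v_{k+1}=\oo(e_k)$, producing a backward orbit along which, by the concatenation formula \eqref{concatena}, $U(v_k)=\rho(U(v_m),(e_{m-1},\dots,e_k))$ for $k<m$. As $\VV$ is finite the orbit must repeat a vertex; letting $j$ be minimal with $v_j=v_i$ for some $i<j$, the edges $\zeta_0=(e_{j-1},\dots,e_i)$ form a cycle based on $v_i$ and the previous identity becomes $U(v_i)=\rho(U(v_i),\zeta_0)$. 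By Corollary \ref{cycledis} this forces $U(v_i)=\be(\zeta_0)$, so $v_i\in\A_\la$.

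\emph{The formula \eqref{lambdaAubry}.} For the inequality $\le$, given $y\in\A_\la$ and any path $\zeta=(e_i)_{i=1}^M$ from $y$ to $x$, the subsolution inequalities $U(\tt(e))\le\rho(U(\oo(e)),e)$, chained through the monotonicity of $\rho$ (Proposition \ref{precycledis}\,{\bf i)}), give by induction on $M$ that $U(x)\le\rho(U(y),\zeta)$; hence $U(x)$ does not exceed the right-hand side of \eqref{lambdaAubry}. For the reverse inequality I would reuse the backward orbit started at $x_0=x$ and stopped at the first repetition: the vertices $v_0,\dots,v_{j-1}$ are then distinct, so $\sigma=(e_{i-1},\dots,e_0)$ is a \emph{simple} path linking $y:=v_i\in\A_\la$ to $x$, and the concatenation identity yields $U(x)=\rho(U(y),\sigma)$. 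Thus the minimum is attained and equals $U(x)$ (if $i=0$ then $x\in\A_\la$ and one uses the trivial path).

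\emph{Prefix optimality \eqref{lambdaAubrybis}.} Let $(y,\zeta)$ realize the minimum, put $w_{j-1}=\oo(e_j)$, $\mu=(e_j,\dots,e_M)$ and $b_{j-1}=\rho(U(y),(e_i)_{i=1}^{j-1})$. The $\le$ step already gives $U(w_{j-1})\le b_{j-1}$. Applying \eqref{lambdaAubry} at $w_{j-1}$ produces a point of $\A_\la$ and a path realizing $U(w_{j-1})$; concatenating it with $\mu$ and using the $\le$ part once more bounds $U(x)\le\rho(U(w_{j-1}),\mu)$. Since also $U(x)=\rho(b_{j-1},\mu)$ by \eqref{concatena} while $U(w_{j-1})\le b_{j-1}$, monotonicity squeezes everything into the equality $\rho(U(w_{j-1}),\mu)=\rho(b_{j-1},\mu)$.

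The main obstacle is precisely this last point: deducing $U(w_{j-1})=b_{j-1}$ from the coincidence of their images under $\rho(\cdot,\mu)$ requires $\rho(\cdot,\mu)$ to reflect the order, and not merely to be nondecreasing, since $\rho(\cdot,e)$ can be flat for large arguments (it stabilizes to $\ov\al(e)$, by Corollary \ref{corcharabis}). This is exactly the delicate monotonicity step already met in the proof of Proposition \ref{spring}, when passing from $\rho(\rho(U(y),\eta),e_M)\le\rho(U(\oo(e_M)),e_M)$ to $\rho(U(y),\eta)\le U(\oo(e_M))$, see \eqref{spring100}; I expect it to be settled there once, exploiting the strict monotonicity of $\al\mapsto\rho(\al,\mu)-\al$ (Proposition \ref{precycledis}\,{\bf ii)}) together with the identity $U(y)=f(y)$ valid on $\A_\la$ and the defining infimum \eqref{soldis}. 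Granting it, \eqref{lambdaAubrybis} for $j=M$ propagates to every $j$ by applying the same argument to the successive prefixes, the remaining identities being immediate consequences of \eqref{concatena}.
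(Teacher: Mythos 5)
Your nonemptiness argument and your proof of \eqref{lambdaAubry} are correct and essentially identical to the paper's: the backward orbit along edges attaining the minimum in \eqref{HJdis}, stopped at the first repeated vertex, produces both the cycle (hence a point of $\A_\la$, via Corollary \ref{cycledis}) and the simple path realizing the minimum, while the opposite inequality follows by chaining the subsolution inequalities through the monotonicity of $\rho$.

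The gap is in \eqref{lambdaAubrybis}. You read it as a statement about an \emph{arbitrary} minimizing pair $(y,\zeta)$, and your squeeze argument stalls exactly where you say it does: from $\rho(U(\oo(e_j)),\mu)=\rho(b_{j-1},\mu)$ together with $U(\oo(e_j))\le b_{j-1}$ one cannot conclude equality, because $\rho(\cdot,\mu)$ is only nondecreasing and is genuinely flat for large arguments (indeed $\rho(\al,e)=\ov\al(e)$ for all $\al\ge\un\al(e)$, by Corollary \ref{corcharabis}). Moreover, the repair you ``expect'' is not available with the tools of the paper: in the flat scenario $U(\oo(e_j))<b_{j-1}$ with $\rho(U(\oo(e_j)),\mu)=\rho(b_{j-1},\mu)$, the strict decrease of $\al\mapsto\rho(\al,\mu)-\al$ merely returns $U(\oo(e_j))<b_{j-1}$ again, which is no contradiction, and neither the identity $U(y)=f(y)$ on $\A_\la$ nor the defining infimum \eqref{soldis} changes this (the latter only reproduces the inequality $U(\oo(e_j))\le b_{j-1}$ you already have). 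What closes the proof is an observation you have in hand but do not use: the minimizing pair produced by your own backward orbit satisfies \eqref{lambdaAubrybis} \emph{by construction}, since every step of the orbit is an exact equality $U(v_k)=\rho(U(v_{k+1}),e_k)$, and chaining these via \eqref{concatena} yields all the prefix identities along $\sigma$. This is precisely the paper's route (``formula \eqref{lambdaAubrybis} is a direct consequence of the construction of $\zeta$''), and this existential version --- \emph{some} minimizer enjoying the prefix property --- is all that is needed downstream, in the proof of Proposition \ref{may}. So either restrict the claim to the constructed minimizer, or supply the missing ingredient your route requires, namely injectivity of $\rho(\cdot,e)$ on $(-\infty,\un\al(e)]$, which the paper never establishes.
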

\begin{proof}
Since $U$ is solution, then there exists for any $x \in \VV$ an edge $e \in - \EE_x$ with
\[U(x)=\rho(U(\oo(e)),e). \]
By iterating backward the previous procedure and using the concatenation formula,  we can construct  a path $\xi$ of any possible length, with
$\tt(\xi)=x$  such that
\[U(x)=\rho(U(\oo(\xi)),\xi).\]
Since the set $\EE$ is finite,  we will find, by going on in the iteration, a cycle $\eta$ contained in $\xi$ such that, by construction
\[U(\oo(\eta))= U(\tt(\eta))=\rho(U(\oo(\eta)),\eta)\]
which implies $U(\oo(\eta))= \be(\eta)$ and consequently  that $y:= \oo(\eta) \in \A_\la$. We denote by $\zeta$ the portion of $\xi$ after
$\eta$. It is a simple path, up to suitable choice of the cycle $\eta$,  joins $y$ to $x$, and in addition
\[U(x)= \rho(U(y),\zeta).\]
This relation   shows \eqref{lambdaAubry}. Formula \eqref{lambdaAubrybis} is a direct consequence of the construction of $\zeta$.
\end{proof}

\smallskip
\begin{Remark} If $e$ is a loop with vertex $x$ and $U(x)= \be(e)= \be(-e)$ then apparently $x \in \A_\la$.  By combining it with Remark
\ref{loop2}, we can say that if on the contrary $x \not\in \A_\la$ then any loop based on $x$ can be removed from the graph without affecting
the solution $U$.

\end{Remark}

\bigskip

\section{Asymptotic as $\la \longrightarrow 0$} \label{asymptotic}

In this section we will study the asymptotic behavior of the
solutions to  \eqref{HJ}, \eqref{HJdis}  and the corresponding $\la$--Aubry sets as $\la$ tends to $0$, assuming that the
Hamiltonians $H_\ga$ satisfy, in addition to {\bf (H1)} and {\bf
(H2)}, the conditions  {\bf (H3)} and {\bf (H4)}, see Subsection \ref{subsec:eik}.  We  plan to perform in a subsequent paper a more complete
analysis of the issue with the aim of recovering in our setting the uniqueness of the limit  established in \cite{DFIZ}.

\medskip

\subsection{Eikonal equations on networks}\label{subsec:eik}

We summarize in this subsection some material taken from \cite{SiconolfiSorrentino}  needed  for the forthcoming convergence results. We
consider   the Eikonal  problem  on $\Gamma$ assuming, beside  {\bf (H1)}, {\bf (H2)}, the following additional conditions

\begin{itemize}
    \item[{\bf (H3)}] for any $x \in \Gamma$, $\ga \in \mathcal E$,   $H_\ga(x,\cdot)$ is quasiconvex with
    \[\mathrm{int} \{p \mid H_\ga(x,p) \leq a\}=  \{p \mid H_\ga(x,p) < a\} \quad\hbox{for any $a \in \R$,}\]
    where int stands for the interior.
    \item[{\bf (H4)}]  given any $\ga \in \EN$,  the map  $s \mapsto  \min_{p \in \R} H_\ga(s,p)$
    is constant  in $[0,1]$.\\
\end{itemize}

\smallskip
\begin{Remark} Assumption  {\bf (H4)} can be actually formulated in a slightly weaker way, see \cite{SiconolfiSorrentino}, We have chosen the
above version for simplicity.

\end{Remark}

\smallskip

We consider for any given arc $\ga$  the family of  Eikonal equations
\[ H_\ga(s,w')=a \qquad\hbox{in $(0,1)$,}\]
with $ a \in \R$.  We look for continuous functions $v$ defined on $\Gamma$ such that
\[H_\ga(s,( v \circ \ga)')= a \qquad\hbox{ in $[0,1]$,  for   any $\ga \in \mathcal E$}\]
\smallskip

The definition of (sub/super) solution is given as in  Definition \ref{def:HJsol} with obvious adaptations.

\begin{Proposition} There exists one and only one value of $a$, called {\em critical}, such that the above equation on $\Gamma$ admits
solutions.
\end{Proposition}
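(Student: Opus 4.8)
The plan is to transfer the problem to the underlying graph exactly as in Sections \ref{statgraph}--\ref{representation}, exploiting the fact that in the Eikonal regime the transition maps become \emph{affine}. Fix $a \in \R$ and apply Lemma \ref{previous} to the Hamiltonian $H_\ga - a$: assumption {\bf (H4)} guarantees that $s \mapsto \min_p(H_\ga(s,p)-a)$ is the constant $a_\ga - a$, so as soon as $a \ge a_\ga$ the maximal subsolution to $H_\ga(s,w')=a$ taking the value $\al$ at $0$ is $\al + v^{\ga,a}$, with $v^{\ga,a}$ a Lipschitz solution vanishing at $0$. Consequently the analogue of $\rho$ is additive,
\[ \rho_a(\al,e) = \al + c_a(e), \qquad c_a(e) := v^{\ga,a}(1) = \int_0^1 p_a^+(s)\,\d s, \]
where $p_a^+(s) = \max\{p \mid H_\ga(s,p) \le a\}$ is the upper endpoint of the (by {\bf (H3)} quasiconvexity) sublevel interval; for $a < a_\ga$ there is no subsolution on $\ga$ and we set $c_a(e) = -\infty$. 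The bridge of \cite{SiconolfiSorrentino} (the Eikonal counterpart of Theorem \ref{theo:ponte}) then reduces the existence of a solution on $\Gamma$ to the solvability of the discrete equation $U(x) = \min_{e \in -\EE_x}(U(\oo(e)) + c_a(e))$.

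For each cycle $\xi = (e_i)_{i=1}^M$ set $C_a(\xi) = \sum_i c_a(e_i)$, and $\mu(a) = \min\{C_a(\xi) \mid \xi \text{ simple cycle}\}$, a minimum over the finite set of simple cycles. The first step is to record the two characterizations:
\[ \text{a discrete subsolution exists} \iff \mu(a) \ge 0, \qquad \text{a discrete solution exists} \iff \mu(a)=0. \]
Indeed, summing the subsolution inequalities $U(\tt(e)) \le U(\oo(e)) + c_a(e)$ around any cycle telescopes to $0 \le C_a(\xi)$; conversely the representation formula of Section \ref{representation}, anchored this time on the vertices lying on a zero-cost cycle (the Eikonal analogue of the $\la$--Aubry set), produces a genuine solution precisely when some cycle has zero cost. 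For the forward implication of the second equivalence, following backwards the edges realizing the minimum in the discrete equation and telescoping yields a closed walk of total cost $0$, which decomposes into simple cycles of nonnegative cost summing to $0$, hence a zero-cost simple cycle.

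It remains to analyse $\mu$. Each $C_a(\xi)$ is nondecreasing in $a$, equals $-\infty$ for $a$ below $\max_{e \in \xi} a_{\Psi(e)}$, and tends to $+\infty$ as $a \to +\infty$ by coercivity {\bf (H2)}; taking the two-cycle $(e,-e)$ shows $\mu(a) = -\infty$ for $a$ small and $\mu(a) \to +\infty$. Being a finite minimum of continuous nondecreasing functions, $\mu$ is continuous and nondecreasing where finite, so $a^* := \inf\{a \mid \mu(a) \ge 0\}$ is finite and, by continuity of the finite branches, $\mu(a^*) = 0$: a solution exists at $a^*$. The heart of the matter, and the step I expect to be the main obstacle, is the \emph{strict} monotonicity of $a \mapsto C_a(\xi)$ on the range where it is finite, which is what upgrades ``the zero set of $\mu$ is an interval'' to ``a single point''. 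This is exactly where {\bf (H3)} enters: the condition $\mathrm{int}\{H_\ga(s,\cdot) \le a\} = \{H_\ga(s,\cdot) < a\}$ together with coercivity forces $H_\ga(s, p_a^+(s)) = a$ and $p_a^+(s)$ to be strictly increasing in $a$ for $a > a_\ga$, whence $c_a(e)$, and so every finite $C_a(\xi)$, is strictly increasing. Granting this, if solutions existed for $a_1 < a_2$ then $\mu(a_1) = \mu(a_2) = 0$, and a zero-cost simple cycle $\xi_2$ at $a_2$ would satisfy $0 = C_{a_2}(\xi_2) \ge C_{a_1}(\xi_2) \ge \mu(a_1) = 0$, forcing $C_a(\xi_2)$ to be constant on $[a_1,a_2]$ and contradicting strict monotonicity. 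Hence $a^*$ is the one and only critical value.
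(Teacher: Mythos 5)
The paper itself offers no proof of this Proposition: it is quoted as background from \cite{SiconolfiSorrentino} (the whole of Subsection 7.1 is a summary of that reference), so there is no internal argument to compare against. Your reconstruction is, in substance, the strategy of that reference, and it is correct: you transplant the paper's own $\la$--machinery (Sections \ref{statgraph}--\ref{representation}) to the Eikonal regime, where the transition map becomes the translation $\rho_a(\al,e)=\al+c_a(e)$ with $c_a(e)=\int_0^1 p_a^+(s)\,\d s$, reduce solvability on $\Gamma$ to the discrete Bellman equation via the Eikonal bridge (legitimately, since the bridge in \cite{SiconolfiSorrentino} holds for every admissible $a\ge a_0:=\max_\ga a_\ga$, so there is no circularity), and characterize solvability by $\mu(a)=0$. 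Both directions of that characterization are right: telescoping gives necessity of $\mu(a)\ge 0$, the backward-selection argument (exactly as in Proposition \ref{prop:lambdaAubry}) extracts a zero-cost cycle from any solution, and anchoring the representation formula on the zero-cost cycles produces a solution when $\mu(a)=0$. You also correctly identified the crux and proved it: {\bf (H3)} forces $H_\ga(s,p_a^+(s))=a$ (otherwise $p_a^+(s)$ would lie in $\mathrm{int}\{H_\ga(s,\cdot)\le a\}$, contradicting maximality), and then $p_{a_1}^+(s)\in\{H_\ga(s,\cdot)\le a_1\}\subset\{H_\ga(s,\cdot)<a_2\}=\mathrm{int}\{H_\ga(s,\cdot)\le a_2\}$ yields strict monotonicity of $c_a(e)$, hence uniqueness of the critical value.

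One step is asserted rather than proved: the continuity of $a\mapsto c_a(e)$ on $[a_\ga,+\infty)$, which you need to conclude $\mu(a^*)=0$ at $a^*=\inf\{a\mid \mu(a)\ge 0\}$ (strict monotonicity alone does not give surjectivity). Right-continuity of $p_a^+(s)$ is automatic from closedness of sublevel sets, but left-continuity genuinely uses {\bf (H3)} again: if $p_{a_n}^+(s)\uparrow q<p_a^+(s)$ for $a_n\uparrow a$, then $H_\ga(s,\cdot)\equiv a$ on $[q,p_a^+(s)]$, producing a flat piece with $\{H_\ga(s,\cdot)<a\}\ne\mathrm{int}\{H_\ga(s,\cdot)\le a\}$; pointwise continuity plus a uniform bound from coercivity then gives continuity of $c_a(e)$ by dominated convergence. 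With that one-line repair your argument is complete. A pleasant byproduct you could have used instead of the generic $-\infty$ convention: under {\bf (H3)}--{\bf (H4)} the argmin of $H_\ga(s,\cdot)$ is a singleton, so on an arc realizing $a_0$ the two-cycle $(e,-e)$ has cost $\int_0^1(p_{a_0}^+-p_{a_0}^-)\,\d s=0$, whence $\mu(a_0)\le 0$ directly and $a^*\in[a_0,+\infty)$ is pinned down as the unique zero of the continuous, strictly increasing function $\mu$.
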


We assume throughout the paper, without any loss of generality, that the critical value is $0$.   It is then clear that
\[  0  \geq   \max_{\ga \in \EN} \, \min_{p \in \R} H_\ga(0,p).\]

We focus on the critical equations

\begin{equation}\label{HJeikg} \tag{{\bf{HJ}$\ga$}}
    H_\ga(s,w')=0 \qquad\hbox{in $(0,1)$,}
\end{equation}
and
\begin{equation}\label{HJeik}  \tag{{\bf HJ$\Gamma$}}
 H_\ga(s,( v \circ \ga)')= 0 \qquad\hbox{ in $[0,1]$,  for   any $\ga \in \mathcal E$}
\end{equation}
\smallskip

We associate to \eqref{HJeik} the discrete equation  on $\VV$.

\begin{equation}\label{DFE}  \tag{{\bf DFE}}
V(x) = \min_{e \in - \EE_x} \big ( V(\oo(e))+ \si(e) \big )
\end{equation}
where $\si(e)= v_{\Psi(e)}(1)$, and  $v_{\Psi(e)}$  is the function appearing in Lemma \ref{previous} in relation with the equation $
H_{\Psi(e)} =0$.  We define
\[ \si(\xi) = \sum_{i=1}^M \si(e_i) \qquad\hbox{ for any path $\xi=(e_i)_{i=1}^M$.}\]

\begin{Proposition}\label{sottosuolo} A function $V : \VV \to \R$ is subsolution to \eqref{DFE} if and only if
\begin{equation}\label{trace}
V(y) - V(x) \leq \si(\xi) \qquad\hbox{for any path $\xi$ linking $x$ to $y$.}
\end{equation}
\end{Proposition}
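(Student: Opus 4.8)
The plan is to reduce the statement to a single edge-wise inequality and then dispatch the two implications by, respectively, specializing \eqref{trace} to length-one paths and telescoping along a general path. The first observation I would record is that the subsolution condition for \eqref{DFE}, namely $V(x) \le \min_{e \in -\EE_x}\big(V(\oo(e)) + \si(e)\big)$ for every $x \in \VV$, is equivalent to demanding $V(x) \le V(\oo(e)) + \si(e)$ for every $x$ and every $e \in -\EE_x$. Since $e \in -\EE_x$ holds exactly when $\tt(e)=x$, letting $x$ and $e$ range freely this becomes precisely the orientation-free, edge-wise condition
\[ V(\tt(e)) - V(\oo(e)) \le \si(e) \qquad \text{for every edge } e \in \EE. \]
I take this reformulation as the hinge of the whole argument.

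For the implication ($\Leftarrow$), I would simply observe that a single edge $e$ is itself a path linking $\oo(e)$ to $\tt(e)$ with $\si$-value $\si(e)$; applying \eqref{trace} to such a path yields the edge-wise inequality above, hence $V$ is a subsolution.

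For the converse ($\Rightarrow$), I assume the edge-wise inequality and take an arbitrary path $\xi=(e_i)_{i=1}^M$ linking $x$ to $y$. Setting $v_0=x$ and $v_i=\tt(e_i)=\oo(e_{i+1})$ for the successive vertices traversed by $\xi$, with $v_M=y$, I telescope:
\[ V(y) - V(x) = \sum_{i=1}^M \big(V(v_i) - V(v_{i-1})\big) = \sum_{i=1}^M \big(V(\tt(e_i)) - V(\oo(e_i))\big) \le \sum_{i=1}^M \si(e_i) = \si(\xi), \]
which is exactly \eqref{trace}. Equivalently one may argue by induction on the length $M$, using the additivity $\si(\bar\xi \cup e_M) = \si(\bar\xi) + \si(e_M)$ together with the inductive bound on $\bar\xi=(e_i)_{i=1}^{M-1}$.

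There is no genuine analytic difficulty here, so the main obstacle is merely the bookkeeping in the first step: correctly matching the $\min$-over-incoming-edges formulation of a subsolution with the per-edge inequality, and keeping straight the orientation convention that $e \in -\EE_x$ means $\tt(e)=x$ while $\oo(e)$ is the initial vertex. Once that identification is made, both implications are immediate, the nontrivial content being entirely carried by the additivity of $\si$ along paths.
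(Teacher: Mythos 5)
Your proof is correct. Note that the paper itself gives no proof of Proposition \ref{sottosuolo}: it is stated as imported material from \cite{SiconolfiSorrentino}, so there is no argument in the text to compare against. Your reduction of the subsolution condition $V(x) \le \min_{e \in -\EE_x}\bigl(V(\oo(e)) + \si(e)\bigr)$ to the edge-wise inequality $V(\tt(e)) - V(\oo(e)) \le \si(e)$, followed by specialization to length-one paths in one direction and telescoping (using the additivity $\si(\xi) = \sum_i \si(e_i)$ and the concatenation condition $\tt(e_i) = \oo(e_{i+1})$) in the other, is exactly the natural argument and correctly supplies the proof the paper omits.
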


\smallskip

There are results similar to Theorem \ref{theo:ponte}, Proposition \ref{pontebis} linking  \eqref{HJeik}   and \eqref{DFE}. We recall in
particular:

\begin{Proposition} The trace  on $\VV$ of any solution to \eqref{HJeik} is solution of \eqref{DFE}. Conversely, any solution of \eqref{DFE}
can be uniquely extended to a solution of \eqref{HJeik}.
\end{Proposition}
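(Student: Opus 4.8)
The plan is to mirror the proof of Theorem \ref{theo:ponte}, replacing the discounted transmission map $\rho(\al,e) = u^\ga_\al(1)$ by its critical Eikonal counterpart. Indeed, Lemma \ref{previous} tells us that for \eqref{HJeikg} the maximal subsolution taking the value $\al$ at $s=0$ is $\al + v_\ga$, so its value at $s=1$ is precisely $\al + \si(e)$ with $\si(e)= v_{\Psi(e)}(1)$. Thus in the Eikonal setting the role of $\rho(\al,e)$ is played by the affine map $\al \mapsto \al + \si(e)$, and the Dirichlet-solvability criterion of Proposition \ref{fork}, read through $u^\ga_\al(1)=\al+\si(e)$ and $u^{-\ga}_\be(1)=\be+\si(-e)$, becomes $\be \leq \al + \si(e)$ and $\al \leq \be + \si(-e)$.

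First I would treat the forward implication. Let $v$ be a solution to \eqref{HJeik} and set $V = v|_\VV$. Fixing $x \in \VV$ and $e \in -\EE_x$, and writing $\ga = \Psi(e)$, the function $v\circ\ga$ is in particular a subsolution of \eqref{HJeikg} taking the value $V(\oo(e))$ at $s=0$; by the maximality in Lemma \ref{previous} we get $V(x) = (v\circ\ga)(1) \leq V(\oo(e)) + \si(e)$, and taking the minimum over $e \in -\EE_x$ shows $V$ is a subsolution of \eqref{DFE}. For the reverse inequality I would invoke condition \textbf{iii)} of Definition \ref{def:HJsol}: at each $x$ there is an arc $\ga_0 = \Psi(e_0)$, $e_0 \in -\EE_x$, along which $v\circ\ga_0$ satisfies the state-constraint condition at $s=1$; the Eikonal analogue of Corollary \ref{charabis} then forces $v\circ\ga_0$ to coincide with $V(\oo(e_0)) + v_{\ga_0}$, whence $V(x) = V(\oo(e_0)) + \si(e_0)$ and equality holds in \eqref{DFE}.

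Next I would construct the extension. Given a solution $V$ of \eqref{DFE} and an edge $e$ with $\ga = \Psi(e)$, set $\al = V(\oo(e))$, $\be = V(\tt(e))$. The subsolution property applied at $\tt(e)$ (using $e \in -\EE_{\tt(e)}$) and at $\oo(e)$ (using $-e \in -\EE_{\oo(e)}$, since $\tt(-e)=\oo(e)$) yields $\be \leq \al + \si(e)$ and $\al \leq \be + \si(-e)$, which are exactly the hypotheses of the Eikonal version of Proposition \ref{fork}. Hence there is a unique solution $w_e$ of \eqref{HJeikg} on $\ga$ with $w_e(0)=\al$, $w_e(1)=\be$; gluing these arc-solutions produces a function $v$ that is continuous on $\Gamma$ (the endpoint values agree with $V$ at every vertex) and solves each \eqref{HJeikg}. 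The supersolution property supplies, for every $x$, an $e_0 \in -\EE_x$ with $V(x) = V(\oo(e_0)) + \si(e_0)$; along that arc $w_{e_0}$ equals the maximal solution $\al + v_{\ga_0}$ and so satisfies the state-constraint condition at $s=1$, giving condition \textbf{iii)}. Uniqueness of the extension is then immediate from Theorem \ref{theo:comprinc}, since on each arc the Dirichlet data at both endpoints are prescribed by $V$.

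The point deserving care — the natural analogue of the main obstacle — is that in the critical regime $\al \mapsto (\al + \si(e)) - \al = \si(e)$ is constant rather than strictly decreasing, so the uniqueness argument behind Theorem \ref{sol!} is unavailable. This is why the statement only asserts uniqueness of the extension of a \emph{given} solution $V$ (secured arc by arc by comparison), and not uniqueness of $V$ itself, which genuinely fails and is exactly what the Aubry-set analysis controls. I would therefore take care that every appeal to the discounted results (Proposition \ref{fork}, Corollary \ref{charabis}) is systematically replaced by its critical counterpart, legitimate here because these are the versions recalled from \cite{SiconolfiSorrentino} under \textbf{(H3)}, \textbf{(H4)}.
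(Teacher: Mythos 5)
Your proposal follows exactly the route the paper intends: the paper gives no proof of this Proposition, presenting it as the analogue of Theorem \ref{theo:ponte} and Proposition \ref{pontebis} and deferring the arc--level ingredients to \cite{SiconolfiSorrentino}, and your argument carries out precisely that analogy, with the affine transmission map $\al \mapsto \al + \si(e)$ replacing $\rho(\cdot,e)$, maximality giving the subsolution inequality, condition \textbf{iii)} plus the state--constraint characterization giving equality along some edge, and the two--point Dirichlet problem producing the extension. Your closing observation --- that $\al\mapsto(\al+\si(e))-\al$ is constant rather than strictly decreasing, so uniqueness of $V$ itself is lost and only uniqueness of the extension survives --- correctly identifies the structural difference between the discounted and critical regimes.

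One step, however, rests on a citation that does not transfer as stated. You justify uniqueness of the extension by ``Theorem \ref{theo:comprinc}, since on each arc the Dirichlet data at both endpoints are prescribed.'' The verbatim $\la=0$ analogue of that comparison principle is false under \textbf{(H1)}--\textbf{(H4)}: take $H_\ga(s,p)=p^2$, which satisfies all the assumptions with $a_\ga=0$ and critical value $0$; then the supersolution inequality $\varphi'(s)^2\ge 0$ is vacuous, so \emph{every} continuous function is a supersolution of \eqref{HJeikg}, and a subsolution certainly need not lie below a supersolution agreeing with it at $s=0,1$. What you actually need, and what is true, is the uniqueness half of the critical analogue of Proposition \ref{fork}, i.e.\ uniqueness of \emph{solutions} of the two--point Dirichlet problem: when $a_\ga<0$ this follows from the comparison argument via strict subsolutions (quasiconvexity and \textbf{(H3)} allow one to push a subsolution into the open sublevel set $\{H_\ga<0\}$ by convex combination with a function whose derivative lies in $\{H_\ga\le a_\ga\}$), while when $a_\ga=0$ assumption \textbf{(H3)} forces $\{p\mid H_\ga(s,p)\le 0\}$ to be a single point, so subsolutions --- hence solutions --- are rigid, determined by their value at $s=0$. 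Since this Dirichlet--uniqueness statement is among the arc--level results of \cite{SiconolfiSorrentino} that the paper recalls, your final remark that every discounted lemma must be replaced by its critical counterpart does repair the step; but the counterpart to invoke is that uniqueness statement, not a $\la=0$ version of Theorem \ref{theo:comprinc}, which does not exist.
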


The Aubry set $\A$ is made up by vertices $y$ such that there is a cycle $\xi$  based on it with $\si(\xi)=0$.

\smallskip

In general  equation  \eqref{DFE} has  many solutions, not just differing by an additive constant. They are univocally determined, once a
trace satifying \eqref{trace}  is assigned on $\A$. The Aubry set plays in a sense the role of a hidden boundary.

\medskip

\subsection{Convergence results}

    We denote  by $u_\la$ , for $\la > 0$,  the solution to \eqref{HJ}.  and set $U_\la = u_\la|_{\VV}$. $U_\la$ is then  the solution of the
    corresponding discrete
equation \eqref{HJdis}.

\begin{Lemma}\label{lemconv} The functions  $u_\la:\Gamma \to \R$  are equibounded with respect to $\la >0$.
\end{Lemma}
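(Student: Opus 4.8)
The plan is to establish equiboundedness by producing explicit constant sub- and supersolutions to \eqref{HJdis} and then invoking the comparison principle, Theorem \ref{sol!}, to sandwich $U_\la$; uniform bounds on the trace $U_\la = u_\la|_\VV$ then propagate to all of $\Gamma$ via the local comparison principle on arcs, Theorem \ref{theo:comprinc}. First I would seek a lower bound independent of $\la$. By Proposition \ref{presiste} we already know $U_\la(x) = f(x) \geq -\tfrac{1}{\la}\max_{e,s} H_{\Psi(e)}(s,0)$; the trouble is this blows up as $\la \to 0$ unless the maximum is nonpositive. Here the hypothesis that the critical value of the Eikonal problem is $0$ is crucial: it gives $0 \geq \max_{\ga} \min_p H_\ga(0,p)$, and more usefully $a_\ga \leq 0$ for each arc under \textbf{(H4)}. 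I would use this to construct a genuinely $\la$--independent competitor rather than relying on the crude bound.

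The cleaner route is to build a constant supersolution and a constant subsolution of \eqref{HJdis} whose values do not depend on $\la$. For the subsolution side, since $a_\ga = \max_s \min_p H_\ga(s,p) \leq 0$, for each arc there is a momentum $p_\ga(s)$ with $H_\ga(s, p_\ga(s)) \leq 0$; combined with \eqref{ovgamma} this lets me exhibit a fixed constant $c_-$ such that the constant function $c_-$ is a subsolution to every \eqref{HJg} and hence, passing through $\rho$, to \eqref{HJdis}, giving $U_\la \geq c_-$ uniformly. Symmetrically, using coercivity \textbf{(H2)} to bound $H_\ga$ from below by a fixed constant $-C$ on the relevant compact range of momenta produced by the Lipschitz bounds, I would find a constant $c_+$ that is a supersolution, yielding $U_\la \leq c_+$. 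The comparison principle Theorem \ref{sol!} then forces $c_- \leq U_\la \leq c_+$ at every vertex, uniformly in $\la$.

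Once $U_\la$ is equibounded on $\VV$, the extension to $\Gamma$ follows from the structure already in place. By Theorem \ref{theo:ponte}, on each arc $\ga$ the function $u_\la \circ \ga$ is the maximal subsolution of \eqref{HJg} attaining the boundary values $U_\la(\oo(e))$ and $U_\la(\tt(e))$, both of which lie in $[c_-, c_+]$. I would then compare $u_\la \circ \ga$ against the constant supersolution $c_+$ and the constant subsolution $c_-$ on $[0,1]$ using Theorem \ref{theo:comprinc}, which pins $u_\la$ between these same $\la$--independent constants on the interior of every arc, hence on all of $\Gamma$ by continuity.

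The main obstacle I anticipate is securing the \emph{lower} bound uniformly in $\la$, precisely because the naive estimate from Proposition \ref{presiste} degenerates as $\la \to 0$. The resolution must exploit the normalization that the Eikonal critical value is $0$, together with \textbf{(H3)}--\textbf{(H4)}, to produce a bounded subsolution that survives the limit; verifying that the candidate constant is simultaneously a subsolution on \emph{all} arcs and respects the compatibility condition \eqref{ovgamma} at the junctions is the delicate computational point. The upper bound, by contrast, should be more routine since coercivity makes large constants supersolutions without any smallness condition on $\la$.
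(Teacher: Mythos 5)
There is a genuine gap, and it sits exactly at the point you flagged as delicate: constant functions cannot serve as $\la$--uniform barriers for the discounted equation, on \emph{either} side. For a constant $c$ the only momentum available is $p=0$, so the subsolution property for \eqref{HJg} reads $\la\, c + H_\ga(s,0)\le 0$ for all $s$, and the supersolution property reads $\la\, c + H_\ga(s,0)\ge 0$. The normalization of the critical value only gives $a_\ga=\max_s\min_p H_\ga(s,p)\le 0$; it says nothing about the sign of $H_\ga(s,0)$. If $\max_s H_\ga(s,0)>0$ (e.g. $H_\ga(s,p)=|p-b(s)|^2-1$ with $|b(s)|>1$ somewhere), any constant subsolution must satisfy $c\le -\frac1\la\,\max_s H_\ga(s,0)\to-\infty$ as $\la\to 0$; symmetrically, if $\min_s H_\ga(s,0)<0$, any constant supersolution must satisfy $c\ge -\frac1\la\,\min_s H_\ga(s,0)\to+\infty$. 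So your claim that the momentum $p_\ga(s)$ with $H_\ga(s,p_\ga(s))\le 0$ ``lets me exhibit a fixed constant $c_-$'' is a non sequitur: that momentum is of use only to a function whose derivative actually equals $p_\ga(s)$, i.e. to an Eikonal subsolution, never to a constant. Your closing remark that the upper bound is ``routine since coercivity makes large constants supersolutions without any smallness condition on $\la$'' is backwards: for each \emph{fixed} $\la$ a large enough constant is a supersolution, but the required size scales like $C/\la$, so no single constant works for all $\la>0$.

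The repair is to trade constants for the critical Eikonal solution, and this is the paper's (one--line) proof. Let $v$ be a solution of \eqref{HJeik}, which exists because the critical value is normalized to $0$, and pick $\al>0$ so large that $v+\al\ge 0\ge v-\al$ on $\Gamma$. Since $H_\ga(s,(v\circ\ga)')=0$ in the viscosity sense on every arc, one gets $\la\,(v+\al)+H_\ga(s,(v\circ\ga)')=\la\,(v+\al)\ge 0$ and $\la\,(v-\al)+H_\ga(s,(v\circ\ga)')\le 0$, with the vertex conditions inherited from those of $v$; hence $v+\al$ and $v-\al$ are super-- and subsolution of \eqref{HJ} for every $\la>0$ simultaneously, and Theorem \ref{compara} yields $v-\al\le u_\la\le v+\al$ uniformly in $\la$, directly on $\Gamma$, with no need for your two--step passage through \eqref{HJdis} and back. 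The moral is that the barrier must carry the gradient of the Eikonal solution (so that the Hamiltonian term vanishes) while the sign condition is imposed on the \emph{value} of the barrier (so that the discount term $\la\, u$ has the right sign); your proposal instead places the sign condition on $H_\ga(s,0)$, which is simply not available.
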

\begin{proof} Let $v$ be a solution of the Eikonal equation on $\Gamma$. We can choose a large positive constant $\al$ such that  that $v +
\al$, $v-\al$ are super and subsolution of \eqref{HJ} for any $\la >0$.  We derive from Theorem \ref{compara}
\[ v - \al \leq  u_\la \leq   v + \al.\]
\end{proof}

\begin{Proposition}\label{conv} The functions $u_\la:\Gamma \to \R$ converge to a
solution of the Eikonal  equation on $\Gamma$, up to subsequences.
\end{Proposition}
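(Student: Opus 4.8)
The plan is to prove convergence (up to subsequences) by establishing compactness in the space of continuous functions on $\Gamma$ and then passing to the limit in the viscosity-solution formulation. First I would invoke Lemma \ref{lemconv}, which gives equiboundedness of the family $\{u_\la\}_{\la>0}$, sandwiched between $v-\al$ and $v+\al$ for a solution $v$ of the Eikonal equation and a suitable constant $\al$. The next ingredient is equicontinuity: on each arc, the functions $s\mapsto u_\la(\ga(s))$ are viscosity solutions of \eqref{HJg}, hence (by the coercivity assumption {\bf (H2)} together with the uniform bound just obtained) they are Lipschitz continuous with a Lipschitz constant that can be taken independent of $\la$. Indeed, coercivity forces $|(u_\la\circ\ga)'|$ to stay below some $R$ wherever $H_\ga(s,(u_\la\circ\ga)')=-\la\,u_\la\circ\ga$, and since $\la\,u_\la$ is uniformly bounded by equiboundedness, the right-hand side lies in a fixed compact set, yielding a uniform momentum bound. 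Arc-by-arc Lipschitz bounds, being finite in number and agreeing at vertices by continuity, assemble into uniform equicontinuity of $u_\la$ on all of $\Gamma$.

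With equiboundedness and equicontinuity in hand, the Ascoli--Arzel\`a theorem provides a subsequence $u_{\la_n}$, with $\la_n\to 0$, converging uniformly on $\Gamma$ to some continuous limit $u_0:\Gamma\to\R$. It then remains to identify $u_0$ as a solution of the Eikonal equation \eqref{HJeik}. The subsolution property passes to the limit by the standard stability of viscosity subsolutions under uniform convergence: on each arc, if $\vr$ is supertangent to $u_0\circ\ga$ at an interior point $s$, one perturbs to obtain supertangents $\vr_n$ to $u_{\la_n}\circ\ga$ at nearby maxima $s_n\to s$, writes $\la_n(u_{\la_n}\circ\ga)(s_n)+H_\ga(s_n,\vr_n'(s_n))\le 0$, and lets $n\to\infty$; since $\la_n\to 0$ and $\la_n\,u_{\la_n}$ is uniformly bounded, the discounted term vanishes and continuity {\bf (H1)} yields $H_\ga(s,\vr'(s))\le 0$. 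The supersolution property on the open arcs is analogous.

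The more delicate point, and the one I expect to be the main obstacle, is verifying the vertex condition {\bf iii)} of Definition \ref{def:HJsol} for the limit $u_0$ with respect to the Eikonal problem, since the state-constraint boundary condition is a one-sided inequality that need not be stable under mere uniform convergence in a naive way. Here I would exploit the discrete framework rather than argue directly on $\Gamma$. By Theorem \ref{theo:ponte}, each $U_{\la_n}=u_{\la_n}|_\VV$ solves \eqref{HJdis}; passing to a further subsequence, $U_{\la_n}\to U_0:=u_0|_\VV$ pointwise on the finite set $\VV$. The strategy is to show that $U_0$ solves the limiting discrete equation \eqref{DFE} by controlling the asymptotics of $\rho(\al,e)=u_\al^\ga(1)$ as $\la\to 0$; the key analytic tool is Lemma \ref{asy}, which states that $u_{\al_n}^{\la_n}$ converges uniformly to $\al+v$ whenever $\al_n\to\al$ and $\um^{\la_n}(0)\ge\al_n$, so that $\rho(U_{\la_n}(\oo(e)),e)-U_{\la_n}(\oo(e))\to\si(e)$ along the relevant edges. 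Feeding this into the subsolution/supersolution characterization at each vertex—being careful that the minimum over $-\EE_x$ is attained along a convergent choice of edge—shows that $U_0$ satisfies \eqref{DFE}, and then the extension result linking \eqref{DFE} to \eqref{HJeik} promotes $U_0$ to a full Eikonal solution $u_0$ on $\Gamma$. The crux is thus matching the vertex conditions through the discrete equation and the uniform convergence supplied by Lemma \ref{asy}, rather than attempting to stabilize the state-constraint condition arc-by-arc.
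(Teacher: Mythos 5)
Your compactness step and the arc--interior stability argument coincide with the paper's proof: equiboundedness from Lemma \ref{lemconv}, a uniform bound on $\la\, u_\la$ combined with coercivity to get equi--Lipschitz continuity, Ascoli--Arzel\`a, and standard stability of viscosity solutions inside each arc. The divergence is entirely in the vertex condition {\bf iii)} of Definition \ref{def:HJsol}, and there your starting premise is a misconception: the state constraint condition \emph{is} stable under uniform convergence once it is coupled, as here, with the supersolution property in the interior. Indeed, given a constrained subtangent $\vr$ to the limit at $s=1$, made strict by subtracting $(1-s)^2$, the minimizers $s_k$ of $u_{\la_{n_k}}\circ\ga-\vr$ on $[1-\de,1]$ converge to $1$, and at each $s_k$ one has $\la_{n_k}u_{\la_{n_k}}(\ga(s_k))+H_\ga(s_k,\vr'(s_k))\ge 0$, from the interior supersolution inequality if $s_k<1$ and from the state constraint condition if $s_k=1$; letting $k\to\infty$ gives $H_\ga(1,\vr'(1))\ge 0$. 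This is what the paper calls ``standard arguments''. The only genuine difficulty is that the distinguished arc in {\bf iii)} depends on $n$, which the paper fixes by pigeonhole: the arcs ending at $x$ are finitely many, so a single arc works for all $k$ along a further subsequence.

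The real gap is at the end of your alternative route. Even granting that $U_0=u_0|_\VV$ solves \eqref{DFE} (which, by the way, requires verifying the hypothesis of Lemma \ref{asy}, namely $U_{\la_n}(\oo(e))\le\un\al_{\la_n}(e)$; this holds for every edge because $U_\la\le f_\la\le\min_{e\in\EE_x}\un\al_\la(e)$ by the remark after \eqref{soldis} and Proposition \ref{presiste}, a point you only gesture at), the extension proposition of Subsection \ref{subsec:eik} produces \emph{some} solution $w_0$ of \eqref{HJeik} with $w_0|_\VV=U_0$. It does not show that the actual limit $u_0$ satisfies {\bf iii)}: the uniqueness of the extension is uniqueness \emph{within the class of solutions of \eqref{HJeik}}, and $u_0$ is precisely not yet known to belong to that class; nor can you identify $u_0$ with $w_0$ by ``same vertex values and solves on the interior of every arc'', since, unlike the discounted case settled by Proposition \ref{fork}, the Dirichlet problem for the critical equation \eqref{HJeikg} on an arc is nowhere shown in the paper to have a unique solution. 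A correct completion along your lines would apply Lemma \ref{asy} on a minimizing edge $e_0\in-\EE_x$ fixed by pigeonhole: there $u_{\la_n}\circ\Psi(e_0)=u^{\la_n}_{\al_n}$ with $\al_n=U_{\la_n}(\oo(e_0))$, so Lemma \ref{asy} yields uniform convergence to $\al+v$, the maximal subsolution of Lemma \ref{previous}, which satisfies the state constraint condition at $s=1$ by maximality, exactly as in Lemma \ref{wellposed}. But note that this reinstates the arc--by--arc pigeonhole argument your detour was designed to avoid, at which point the paper's direct proof is both shorter and assumption--free (it does not need {\bf (H3)}--{\bf (H4)} beyond what the statement already requires).
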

\begin{proof}  We have that
\[\la \, u_\la(x) \geq \min \{  \la \, m, \,  - \max_\ga \, \max_s H_\ga(s,0) \} \qquad\hbox{for any   $x \in \Gamma$, $\la >0$,}\]
where $m$ is a lower bound for all the $u_\la$ as $x$ varies in $\Gamma$, see  Lemma \ref{lemconv}. We deduce, by the coercivity of the
$H_\ga$,\emph{} that
the functions $u_\la$ are equi--Lipschitz continuous and equibounded.
They are therefore convergent up to subsequences.

 Assume, to fix ideas, that $u_{\la_n}$, for some infinitesimal sequence $\la_n$, converges to a function $v$. Then $v \circ \ga$ is solution
 in $(0,1)$ of   \eqref{HJeikg},  for any arc $\ga$, by basic stability properties of viscosity solutions theory.

  Given a vertex $x$, there is, by the very definition of solution to \eqref{HJ}, an arc $\ga_n$ with $\ga_n(1)=x$
  such that $u_{\la_n}\circ \ga_n$ satisfies the state constraint boundary condition for \eqref{HJg},
  with $\la = \la_n$ at $s=1$. The arcs  being finite, we can extract a subsequence $\la_{n_k}$ of $\la_n$
  and select  $\ga$ with $\ga(1)=x$  such that  $u_{\la_{n_k}}\circ \ga$ satisfies
  the state constraint boundary condition for \eqref{HJg}, with $\la = \la_{n_k}$, for any $k$,  at $s=1$. By applying standard arguments, we
  derive that the limit function  $v \circ \ga$ satisfies the state constraint boundary condition  for \eqref{HJeikg}.  This concludes the
  proof, taking into   account the definition of solution to \eqref{HJeikg}.

\end{proof}

\begin{Proposition}\label{verano}  We have
\[\rho_\la(\al_n, \xi) \longrightarrow \al + \si(\xi) \qquad\hbox{as $\la \longrightarrow 0$,}\]
 for any path $\xi= (e_i)_{i = 1}^M$  and $ \al_n \longrightarrow \al \in \R$    with $\al_n \leq \un\al_\la(e_1)$ and
\[\rho_\la \left (\al_n, (e_i)_{i=1}^j \right ) \leq \un\al(e_{j+1}) \qquad\hbox{for $j=1,\cdots, M-1$, \, $n$ large.}\]
\end{Proposition}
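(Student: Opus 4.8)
The plan is to establish the limit by inductively tracking the composition structure of $\rho_\la$ along the path $\xi$, using the single-edge convergence result of Lemma \ref{asy} as the base case. The key observation is that $\rho_\la(\al, e) = u^{\Psi(e)}_\al(1)$ is precisely the quantity whose asymptotic behavior Lemma \ref{asy} describes: if $\al_n \to \al$ with $\al_n \leq \un\al_\la(e) = \um^{\Psi(e)}_\la(0)$ eventually, then $u^{\la_n}_{\al_n}$ converges uniformly to $\al + v_{\Psi(e)}$, and evaluating at $s=1$ gives $\rho_{\la_n}(\al_n, e) \to \al + v_{\Psi(e)}(1) = \al + \si(e)$. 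This is exactly the desired statement for a single edge.

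First I would set up the induction on the length $M$ of the path. The base case $M=1$ is the direct application of Lemma \ref{asy} just described, where the hypothesis $\al_n \leq \un\al_\la(e_1)$ supplies the condition needed to invoke the lemma. For the inductive step, I would write $\bar\xi = (e_i)_{i=1}^{M-1}$ and use the concatenation formula \eqref{concatena} to express
\[ \rho_\la(\al_n, \xi) = \rho_\la\big(\rho_\la(\al_n, \bar\xi), e_M\big). \]
By the inductive hypothesis applied to $\bar\xi$, the inner quantity $\be_n := \rho_\la(\al_n, \bar\xi)$ converges to $\al + \si(\bar\xi) = \al + \sum_{i=1}^{M-1}\si(e_i)$. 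I would then apply Lemma \ref{asy} once more, this time to the edge $e_M$ with the sequence $\be_n$ in the role of $\al_n$, obtaining $\rho_\la(\be_n, e_M) \to (\al + \si(\bar\xi)) + \si(e_M) = \al + \si(\xi)$, which completes the induction.

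The main obstacle is verifying that the hypotheses of Lemma \ref{asy} are genuinely available at each stage of the induction, since that lemma requires the base value to lie below $\um^{\la_n}(0)$ eventually. For the final edge $e_M$ this is precisely what the extra standing assumption $\rho_\la\big(\al_n, (e_i)_{i=1}^{M-1}\big) \leq \un\al(e_M)$ guarantees, since $\un\al_\la(e_M) = \um^{\Psi(e_M)}_\la(0)$; and more generally the chain of conditions $\rho_\la\big(\al_n, (e_i)_{i=1}^j\big) \leq \un\al(e_{j+1})$ for $j = 1, \dots, M-1$ supplies exactly the admissibility needed to invoke Lemma \ref{asy} at each intermediate edge. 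I would therefore be careful to state the inductive hypothesis so that it propagates these constraints correctly, ensuring at each step that the incoming sequence $\be_n$ satisfies $\be_n \leq \un\al_\la(e_{j+1})$ for $n$ large so that the next application of the lemma is licit. A minor technical point to address is the distinction between $\un\al(e)$ and its $\la$-dependent analogue $\un\al_\la(e)$: I would confirm that the hypotheses as stated, together with the behavior of $\un\al_\la$ as $\la \to 0$, indeed place $\be_n$ in the regime where Lemma \ref{asy} applies uniformly along the subsequence.
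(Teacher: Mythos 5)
Your proposal is correct and follows essentially the same route as the paper's own proof: induction on the length of the path, with Lemma \ref{asy} handling the base case and the concatenation formula \eqref{concatena} combined with the inductive hypothesis and one further application of Lemma \ref{asy} handling the inductive step. Your added care in checking that the chain of hypotheses $\rho_\la\bigl(\al_n,(e_i)_{i=1}^j\bigr)\leq\un\al_\la(e_{j+1})$ keeps each intermediate sequence in the admissible regime for Lemma \ref{asy} (and your flagging of the $\un\al$ versus $\un\al_\la$ notational slip in the statement) is a welcome refinement of the paper's rather terse argument, but it is not a different method.
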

\begin{proof}
The argument proceeds by induction on the length of $\xi$. If $M=1$ then the assertion is  a consequence of Lemma \ref{asy} . We assume it
true for any path of length less than or equal to $M-1$ and deduce it for the length $M$. We write $\ov \xi=(e_i)_{i=1}^{M-1}$ and use the
concatenation formula plus induction step, and Lemma \ref{asy} to get
\[ \lim_{\la \to 0}  \rho_\la(\al_n,\xi)= \rho_\la(\rho_\la(\al_n,\ov\xi),e_M)= \al+ \si(\ov\xi) +\si(e_M)= \al + \si(\xi).\]
\end{proof}

\smallskip

As pointed out in the Introduction, the next proposition should be compared with the convergence result for Mather sets obtained in
\cite{MaroSorrentino}.

\begin{Proposition}\label{superpoor}
The sets  $\A_\la$   are contained in  $\A$  for $\la$ sufficiently small.
\end{Proposition}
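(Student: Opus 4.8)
The plan is to argue by contradiction, using a double compactness reduction followed by the path-convergence result of Proposition~\ref{verano}. Suppose the inclusion fails for arbitrarily small $\la$: then there are an infinitesimal sequence $\la_n$ and vertices $y_n \in \A_{\la_n} \setminus \A$. Since $\VV$ is finite, after extracting a subsequence I may assume $y_n \equiv y$ for a fixed vertex $y$ with $y \notin \A$ and $y \in \A_{\la_n}$ for every $n$. By Corollary~\ref{corspring} each membership $y \in \A_{\la_n}$ is witnessed by a \emph{circuit} based on $y$; as the graph carries only finitely many circuits, a further subsequence lets me fix a single circuit $\zeta = (e_i)_{i=1}^M$ based on $y$ with $U_{\la_n}(y) = \be_{\la_n}(\zeta)$, i.e. $\rho_{\la_n}(U_{\la_n}(y),\zeta) = U_{\la_n}(y)$, for all $n$.

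Set $\al_n = U_{\la_n}(y)$. The functions $u_\la$ are equibounded by Lemma~\ref{lemconv}, so the $\al_n$ are bounded and, after one last extraction, $\al_n \to \al$ for some $\al \in \R$. The heart of the proof is to verify that $\al_n$ and $\zeta$ satisfy the feasibility hypotheses of Proposition~\ref{verano}. To this end I would invoke identity \eqref{spring11} of Proposition~\ref{spring}, which holds for the circuit $\zeta$ by Corollary~\ref{corspring}; taking the base index equal to $1$ it reads
\[ \rho_{\la_n}\!\left(\al_n,(e_i)_{i=1}^{j}\right) = U_{\la_n}(\oo(e_{j+1})) \qquad\hbox{for } j = 1,\ldots,M-1. \]
Combining this with Proposition~\ref{presiste}, which gives $U_{\la_n}(x)\le\un\al(e)$ whenever $\oo(e)=x$ (from $U_{\la_n}\le f$), applied at $x=\oo(e_{j+1})$ with the edge $e_{j+1}$, yields precisely $\rho_{\la_n}(\al_n,(e_i)_{i=1}^{j}) \le \un\al(e_{j+1})$; likewise $\al_n = U_{\la_n}(y) \le \un\al(e_1)$ because $\oo(e_1)=y$. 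These are exactly the conditions required in Proposition~\ref{verano}.

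With the hypotheses in force, Proposition~\ref{verano} yields $\rho_{\la_n}(\al_n,\zeta) \to \al + \si(\zeta)$. On the other hand the left-hand side equals $\al_n$ for every $n$, since $\al_n = \be_{\la_n}(\zeta)$ is the fixed point of $\rho_{\la_n}(\cdot,\zeta)$, and $\al_n \to \al$. Passing to the limit forces $\al = \al + \si(\zeta)$, hence $\si(\zeta) = 0$. Thus $\zeta$ is a cycle based on $y$ with vanishing $\si$-cost, so $y \in \A$ by the definition of the Aubry set, contradicting $y \notin \A$. This contradiction proves that $\A_\la \subseteq \A$ for all sufficiently small $\la$.

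I expect the main obstacle to be the verification of the feasibility conditions of Proposition~\ref{verano}: they are precisely what makes Lemma~\ref{asy} applicable at each stage along $\zeta$, and matching them requires combining the structural equalities \eqref{spring11} for $\la$--Aubry points with the a priori upper bound of Proposition~\ref{presiste}, while keeping the indices and the base point aligned with the statement of Proposition~\ref{verano}. The repeated passage to subsequences is a harmless secondary point, justified by the finiteness of $\VV$ and of the set of circuits.
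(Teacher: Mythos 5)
Your proposal is correct and follows essentially the same route as the paper: contradiction, reduction to a fixed vertex and a fixed circuit via finiteness of $\VV$ and of the circuits (Corollary \ref{corspring}), verification of the feasibility hypotheses of Proposition \ref{verano} through \eqref{spring11} and Proposition \ref{presiste}, and the fixed-point identity $\rho_{\la_n}(\al_n,\zeta)=\al_n$ forcing $\si(\zeta)=0$. The only (harmless) deviation is that you use the inequality $U_{\la_n}\le f_{\la_n}$ where the paper invokes the equality $U_{\la_n}=f_{\la_n}$ at $\la$--Aubry points; both yield the required upper bounds $\rho_{\la_n}\bigl(\al_n,(e_i)_{i=1}^{j}\bigr)\le \un\al_{\la_n}(e_{j+1})$.
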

\begin{proof} The argument is by contradiction. Since the vertices are finite, we can therefore assume that there is $y \in \VV$  and $\la_n
\to 0$ with
\[  y \in \left ( \cap_n \A_{\la_n} \right ) \setminus \A.\]
 Taking into account the very definition of $\la$--Aubry set, Corollary \ref{corspring},  and the fact that the circuits are finite,  we have,
 up to extracting a subsequence from $\la_n$, that there exists a circuit $\xi = (e_i)_{i=1}^M$ based on $y$ satisfying  $U_{\la_n}(y)=
 \be_{\la_n}(\xi)$ for any $n$, and the conditions of Proposition \ref{spring}.  Taking into account Proposition \ref{presiste}, we then have
 \[ U_{\la_n}(y) = f_{\la_n}(y) \leq \un\al_{\la_n}(e_1)\]
 and
 \[\rho_{\la_n} \left (U_{\la_n}(y), (e_i)_{i=1}^j \right ) = U_{\la_n}(\oo(e_{j+1}))= f_{\la_n}(\oo(e_{j+1}))\leq \un\al_{\la_n}(e_{j+1})
 \qquad j=1,\cdots, M-1.\]
 Since the sequence $U_{\la_n}(y)$ is bounded by Lemma \ref{lemconv}, it is convergent to some $\al$, up to subsequences, and we have by
 applying Proposition \ref{verano}
 \[\al = \lim_n U_{\la_n}(y)= \rho_{\la_n}( U_{\la_n}(y),\xi)= \al + \si(\xi).\]
 This is impossible  because $ y \not\in \A$, and consequently by the very definition of $\A$,  $\si(\xi) > 0$.
\end{proof}

\smallskip

  We  consider the limit set  $\mathcal B$  defined as
\[ \mathcal B= \{y \in \VV \mid \exists \; \la_n \longrightarrow 0 \;\;\; \hbox{with} \;\; y \in \A_{\la_n}\}\]
It comes from Proposition \ref{superpoor} that $\mathcal B$ is contained in the Aubry set $\A$.
The next result shows that any limit of the $U_\la$ is uniquely determined by its trace on $\mathcal B$.

\smallskip

\begin{Proposition}\label{may}
Let $U_{\la_n}$ be a sequence of solution to \eqref{HJdis}  with $\la= \la_n$, converging to $V$. Then $V$ is a solution  of  \eqref{DFE}
satisfying
\[V(x)= \min \{V(y) + \si(\xi) \mid y \in \mathcal B, \, \xi \; \hbox{path joining $y$ to $x$}\}.\]
\end{Proposition}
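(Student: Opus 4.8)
The plan is to establish two things: first that the limit $V$ solves \eqref{DFE}, and second that $V$ admits the claimed representation in terms of its trace on $\mathcal B$. For the first part, I would argue by stability. By Proposition \ref{sottosuolo}, being a subsolution to \eqref{DFE} is equivalent to the inequality $V(y)-V(x)\le\si(\xi)$ along every path $\xi$ from $x$ to $y$. Each $U_{\la_n}$ is a solution, hence subsolution, to \eqref{HJdis}, so by the subsolution inequality \eqref{subsolbis} we have $U_{\la_n}(x)\le\rho_{\la_n}(U_{\la_n}(\oo(e)),e)$ for every $e\in-\EE_x$. The strategy is to pass this to the limit using Proposition \ref{verano}, whose hypotheses ($\al_n\le\un\al_\la(e_1)$ and the intermediate conditions) must be verified along the relevant paths; here I would lean on Proposition \ref{presiste}, which bounds $f_\la(x)$ and hence $U_\la(x)$ by $\min_{e\in\EE_x}\un\al_\la(e)$, to guarantee those hypotheses hold. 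Passing \eqref{subsolbis} to the limit then yields $V(x)\le V(\oo(e))+\si(e)$, i.e.\ $V$ is a subsolution to \eqref{DFE}; the supersolution inequality (equality for at least one edge) follows by selecting, for each $n$, an edge $e$ realizing equality in the solution relation for $U_{\la_n}$, extracting a subsequence so the same $e$ works for all $n$ (the edges are finite), and again invoking Proposition \ref{verano}.

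For the representation formula, I would first establish the ``$\le$'' direction, which is essentially the subsolution property just proved: for any $y\in\mathcal B$ and any path $\xi$ joining $y$ to $x$, iterating $V(x)\le V(\oo(e))+\si(e)$ along the edges of $\xi$ gives $V(x)\le V(y)+\si(\xi)$, so $V(x)$ is bounded above by the minimum on the right-hand side. The ``$\ge$'' direction is the substantive part and is where I expect the main obstacle. The idea is to produce, for each $x$, a distinguished vertex $y\in\mathcal B$ and a path $\xi$ from $y$ to $x$ along which equality $V(x)=V(y)+\si(\xi)$ holds. The natural source of such a path is the representation \eqref{lambdaAubry} for the discrete solutions: for each $n$ there is $y_n\in\A_{\la_n}$ and a simple path $\zeta_n$ linking $y_n$ to $x$ with $U_{\la_n}(x)=\rho_{\la_n}(U_{\la_n}(y_n),\zeta_n)$. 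Since the simple paths are finite in number and the vertices are finite, I can extract a subsequence for which $y_n\equiv y$ and $\zeta_n\equiv\zeta$ are constant; by the definition of $\mathcal B$ this $y$ lies in $\mathcal B$.

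The crux is then to pass this equality to the limit. I would apply Proposition \ref{verano} to the path $\zeta$, after checking its hypotheses: the anchor values $U_{\la_n}(y)=f_{\la_n}(y)$ (valid since $y\in\A_{\la_n}$, by Proposition \ref{presiste} or the defining property of the $\la$--Aubry set) satisfy $U_{\la_n}(y)\le\un\al_{\la_n}(e_1)$, and the intermediate conditions $\rho_{\la_n}(U_{\la_n}(y),(e_i)_{i=1}^j)=U_{\la_n}(\oo(e_{j+1}))\le\un\al_{\la_n}(e_{j+1})$ follow from \eqref{lambdaAubrybis} combined with the bound in \eqref{corva0}, exactly as in the proof of Proposition \ref{superpoor}. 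Granting these, Proposition \ref{verano} gives $\rho_{\la_n}(U_{\la_n}(y),\zeta)\to V(y)+\si(\zeta)$, while the left side converges to $V(x)$; hence $V(x)=V(y)+\si(\zeta)\ge\min\{V(y')+\si(\xi)\}$, closing the inequality. The delicate point throughout is ensuring the hypotheses of Proposition \ref{verano} are genuinely met along the limiting path uniformly in $n$, so that the nonlinear operators $\rho_{\la_n}$ linearize into the additive cocycle $\si$ in the limit; I would treat this verification as the heart of the argument and handle it by transcribing the estimates already assembled in Proposition \ref{superpoor}.
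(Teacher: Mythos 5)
Your proof is correct, and for the substantive part --- the representation formula --- it is essentially the paper's own argument: Proposition \ref{prop:lambdaAubry} produces $y_n\in\A_{\la_n}$ and simple paths $\zeta_n$, finiteness of vertices and simple paths lets you freeze $y\in\mathcal B$ and $\zeta$ along a subsequence, the hypotheses of Proposition \ref{verano} are checked via $U_\la\le f_\la$ together with Proposition \ref{presiste} and \eqref{lambdaAubrybis}, and the converse inequality comes from the subsolution property through Proposition \ref{sottosuolo}. The only point where you genuinely diverge is the preliminary claim that $V$ solves \eqref{DFE}: the paper disposes of it by citing Proposition \ref{conv}, i.e.\ it passes through the PDE-level stability of viscosity solutions on $\Gamma$ and then uses the bridge between \eqref{HJeik} and \eqref{DFE}, whereas you re-derive it purely at the discrete level, applying Proposition \ref{verano} edge by edge (with the hypothesis $U_{\la_n}(\oo(e))\le\un\al_{\la_n}(e)$ supplied by $U_\la\le f_\la$ and Proposition \ref{presiste}) to pass \eqref{subsolbis} to the limit, and extracting a constant minimizing edge for the supersolution side. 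Both routes are sound; yours has the merit of keeping the whole proof inside the discrete framework, at the cost of redoing a stability argument the paper already has in stock, while the paper's citation is shorter but makes the discrete statement depend on the network-level compactness argument (equi-Lipschitz bounds and Ascoli--Arzel\`a) hidden in Proposition \ref{conv}.
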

\begin{proof}  We set to ease notations
\[U_n=U_{\la_n}, \quad \rho_n= \rho_{\la_n}, \quad \A_n= \A_{\la_n}.\]
We know from Proposition \ref{conv} that $V$ solves \eqref{DFE}.  For any $x \in \VV$, we have by Proposition \ref{prop:lambdaAubry} that
\[U_n(x)= \rho_n(U_n(y_n), \xi_n)\]
for some $y_n \in \A_n$, and some simple path $\xi_n$ linking $y_n$ to $x$. Since both vertices and simple paths are finite, we deduce that
there is a subsequence $\la_{n_k}$, $y \in \cap_k \A_{n_k} \subset \mathcal B$, a  simple path $\xi=(e_i)_{i=1}^M$ joining $y$ to $x$ such
that
\[U_{n_k}(x)= \rho_{n_k}(U_{n_k}(y), \xi) \qquad\hbox{for any $k$}\]
and in addition
\[U_{n_k}(\oo(e_j))= \rho_{n_k} \left (U_{n_k}(y), (e_i)_{i=1}^{j-1} \right )   \qquad\hbox{for any $j= 2, \cdots M-1$.}\]
Owing to Proposition \ref{presiste} and to  the inequality $f_{n_k} \geq U_{n_k}$,  we are  therefore  in the position to apply Proposition
\ref{verano}  and get
\[\lim_k  \rho_{n_k}(U_{n_k}(y), \xi) = V(y) + \si(\xi).\]
This implies
\[V(x)\geq  \min \{V(y) + \si(\xi) \mid y \in \mathcal B, \, \xi \; \hbox{path joining $y$ to $x$}\}.\]
The converse inequality is a consequence of $V$ being solution to \eqref{DFE}, see Proposition \ref{sottosuolo}.
\end{proof}

\bigskip

\begin{appendix}

\section{Graphs and networks} \label{immaterial}

An {\it immersed network} or {\it continuous graph}  is a subset $
\Gamma \subset \R^N$ of the form
\[ \Gamma = \bigcup_{\ga \in \EN} \, \gamma([0,1]) \subset \R^N,\]
where $\EN$ is a finite collection of regular simple curves, called
{\it arcs} of the network,  we assume for simplicity parameterized
in $[0,1]$.  The main condition is
\begin{equation}\label{netw}
    \ga((0,1)) \cap \ga'([0,1]) = \emptyset \qquad\hbox{whenever $\ga \neq
\pm \ga'$,}
\end{equation}
where for any  arc $\ga$,  the {\it inverse arc}
$- \ga$ defined as
\[- \ga(s)= \ga( 1 -s) \qquad\hbox{for $s \in [0,1]$.}\]
 We make precise that we consider throughout the paper $\ga$, $-\ga$ as distinct arcs. We call  {\it vertices}
initial  and terminal points of the arcs, and denote  by  $\VV$ the
sets of all such vertices. Note that \eqref{netw} implies that
\[\ga((0,1)) \cap \VV  = \emptyset \qquad\hbox{for any $\ga \in
\EN$.}\] We assume that the network  is  connected, namely given two
vertices there is a finite concatenation of  arcs linking them.

As already pointed out, we do not put any restriction on the geometry of the network.

\medskip

A  graph $\XX=(\VV ,\EE )$ is an
ordered pair of  sets $\VV$ and $\EE$, which are  called,
respectively, {\it vertices} and (directed) {\it edges}, plus two
functions:
$$\oo: \EE \longrightarrow \VV $$
which associates to each (oriented) edge its {\it origin} (initial
vertex), and
\begin{eqnarray*}
 - {\phantom{o}}: \EE &\longrightarrow& \EE \\
e &\longmapsto& - e,
\end{eqnarray*}
which  changes  orientation, and  is a
fixed point free involution.
We  define the terminal vertex of $e$ as
\[\tt  (e)= \oo ( - e)\]
We consider $e$ and $-e$ as distinct edges. We call {\it loop}  any edge $e$ with $\oo(e)=\tt(e)$. We define {\it path}  $\xi=(e_1, \cdots,
e_M)$ any finite sequence
of concatenated  edges, namely satisfying
\[\tt  (e_j)=\oo (e_{j+1}) \qquad\hbox{for any $j= 1, \cdots, M-1$.}\]
We define the {\em length of a path} as the number of its  edges.
We set $\oo (\xi)= \oo (e_1)$, $\tt  (\xi)= \tt  (e_M)$.
We call a path {\it closed} or a {\it cycle} if $\oo (\xi)= \tt
(\xi)$.

Given two paths $\xi$, $\eta$, we say that  $\xi$ is contained in
$\eta$, mathematically  $\xi \subset \eta$, if the edges  of $\xi$
make up a subset of the  edges of $\eta$.
If the condition $\tt(\xi) = \oo(\eta)$ holds true, we denote by
$\xi \cup \eta$ the path obtained via concatenation of $\xi$ and
$\eta$.

We call  {\it simple} a path without repetition of vertices,  except
possibly the initial and terminal vertex, in other terms
$\xi=(e_i)_{i=1}^M$ is simple if
\[\tt(e_i) = \tt(e_j) \, \Rightarrow i=j.\]

\smallskip

\begin{Remark}\label{semplicissimo}
There are finite many simple paths  in  a finite graph. In
fact their number   is estimated from above by that of the sum of
the  $k$--permutations of $|\EE|$  objects for $ 2 \leq k \leq
|\EE|$.
\end{Remark}
\smallskip

\begin{Proposition}\label{cy3}  A path is simple if and only there is
no simple  cycle properly contained in it.
\end{Proposition}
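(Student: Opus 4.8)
The plan is to prove Proposition \ref{cy3}, namely that a path is simple if and only if it contains no simple cycle properly contained in it. This is a purely combinatorial statement about the graph $\XX$, so I will argue directly from the definition of a simple path, $\xi = (e_i)_{i=1}^M$ being simple exactly when $\tt(e_i) = \tt(e_j)$ forces $i = j$, that is, no terminal vertex is repeated.

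First I would prove the contrapositive of the forward direction: if $\xi$ is \emph{not} simple, then it properly contains a simple cycle. Assuming $\xi$ is not simple means there exist indices $i < j$ with $\tt(e_i) = \tt(e_j)$, so the subpath $(e_{i+1}, \dots, e_j)$ is a cycle $\eta$ (its origin $\oo(e_{i+1}) = \tt(e_i)$ equals its terminus $\tt(e_j)$). Among all such repetitions I would select one making the cycle as short as possible; a minimal-length cycle of this form cannot repeat any intermediate vertex, since a shorter repetition would contradict minimality, so $\eta$ is a simple cycle. Whether $\eta$ is \emph{properly} contained in $\xi$ is the point needing care: if the only repeated pair gives $i=0$ and $j=M$ (i.e.\ $\xi$ itself is a cycle using all its edges), I must check that $\xi$, being non-simple, still has an internal repetition producing a strictly shorter subcycle. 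Thus the main obstacle is the bookkeeping of boundary cases, the chief subtlety being the convention, stated in Appendix \ref{immaterial}, that a simple path may repeat \emph{only} its initial and terminal vertex, so a cycle can itself be simple; I must ensure the extracted subcycle is strictly shorter than $\xi$ to guarantee proper containment.

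Conversely, for the reverse direction I would again argue by contraposition: if $\xi$ properly contains a simple cycle $\eta$, then $\xi$ is not simple. Since $\eta \subsetneq \xi$ is a cycle, its edges form a proper nonempty subset of the edges of $\xi$, and the vertex $\oo(\eta) = \tt(\eta)$ appears at two distinct positions along $\xi$ — once where $\eta$ begins and once where it ends — and because $\eta$ is a proper subpath these two occurrences correspond to distinct terminal-vertex indices in $\xi$. This exhibits a repetition $\tt(e_i) = \tt(e_j)$ with $i \neq j$ that is not merely the allowed initial/terminal coincidence, so $\xi$ fails to be simple by definition. The heart of this direction is verifying that the repeated vertex witnessed by $\eta$ genuinely violates simplicity rather than being absorbed into the permitted endpoint exception.

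Overall the proof is elementary and the two implications are symmetric in spirit, each handled by contraposition. The single place demanding genuine attention is the proper-containment condition interacting with the endpoint convention for simple paths; I expect essentially all the real work to concentrate there, with the rest being a direct unwinding of definitions.
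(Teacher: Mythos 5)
The paper states Proposition \ref{cy3} without proof, so there is no argument of the authors to compare yours with; it must be judged on its own merits. Your first implication --- if $\xi$ is not simple, a shortest repetition yields a circuit properly contained in $\xi$ --- is correct, and the endpoint case you worry about is in fact vacuous: if the only coincidence is $\oo(\xi)=\tt(\xi)$, then $\xi$ is simple by the stated convention, so this case never arises in the contrapositive. One definitional caveat, though: the proposition is false if ``simple'' is read literally as the displayed condition $\tt(e_i)=\tt(e_j)\Rightarrow i=j$, which you take as your starting point. The path $a\to b\to a\to c$ has pairwise distinct terminal vertices, hence would be simple in that reading, yet it properly contains the circuit $a\to b\to a$. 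So you must count the origin $\oo(e_1)$ among the vertices that may not repeat (the paper's informal description), and your extraction of a shortest cycle must also allow repetitions involving it.

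The genuine gap is in the converse implication. Appendix \ref{immaterial} defines containment of paths by inclusion of edge \emph{sets}, while your argument treats the simple cycle $\eta$ as a consecutive subpath of $\xi$, locating $\oo(\eta)$ ``where $\eta$ begins'' and ``where $\eta$ ends'' along $\xi$. With edge-set containment the edges of $\eta$ may occur in $\xi$ non-consecutively and in a different cyclic order, and then your witness degenerates. Concretely, take $\xi=(e_1,e_2,e_3)$ with $e_1\colon a\to b$, $e_2$ a loop at $b$, $e_3\colon b\to a$, and $\eta=(e_1,e_3)$. This $\eta$ is a simple cycle whose edge set is properly contained in that of $\xi$, but $\oo(\eta)=\tt(\eta)=a$ occurs along $\xi$ only as its initial and terminal vertex, i.e.\ exactly as the permitted coincidence, so your argument exhibits no forbidden repetition; the actual failure of simplicity (the loop at $b$) is never detected. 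A proof covering this case argues on successors instead: since $\eta$ closes up and indices cannot increase forever, some $e_i\in\eta$ is followed in $\eta$ by an edge $e_j$ with $j\neq i+1$, giving the repetition $\tt(e_i)=\oo(e_j)$ with $i\neq j-1$; if $\xi$ were simple, the only admissible such jump is $i=M$, $j=1$, so every other successor in $\eta$ agrees with the successor in $\xi$, and following $\eta$ from $e_1$ shows it exhausts all of $e_1,\dots,e_M$, contradicting proper containment.
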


We define a  {\it circuit} to be a simple cycle.

 \medskip

Given $x \in   \VV$, we set
\begin{equation}\label{neww3}
 -\EE_x= \{e \in \EE \mid \tt (e) =x\}.
\end{equation}

\medskip

Starting from a network, a graph can be defined taking as vertices the same vertices of $\Gamma$ and as edges the elements of any abstract set
$\EE$ equipotent to $\mathcal E$.  We denote by  $\Psi$ a  bijection from $\EE$ to $\mathcal E$. The functions $\oo$,  $-$ yielding the graph
structure are given by
 \begin{eqnarray*}
   \oo(e)& = & \Psi(e)(0) \\
   - e &=& \Psi^{-1} (- {\Psi(e)}).
 \end{eqnarray*}

A graph corresponding to a connected network is connected in the sense that any two vertices are linked by
some path. \\
\bigskip

\medskip

\section{Basic material on HJ equations in $(0,1)$}\label{basicbis}

Given a  Lipschitz--continuous  function $w$ in $[0,1]$, we set for
$s \in [0,1]$
\begin{equation}\label{dbarra}
   \partial w(s) = \mathrm{co} \, \{p \mid p= \lim w'(s_i), \, w  \;\hbox{differentiable at } \; s_i, \,  \;s_i \to
s,\, s_i \in (0,1)\},
\end{equation}
where the symbol  co stands for convex hull.

\begin{Lemma}\label{techne}  Given a  Lipschitz--continuous  function $w$ in $[0,1]$,
 the function \; $s \mapsto  w(0) + q \, s $  \; is a constrained subtangent to $w$ at
$s=0$ if
\begin{equation}\label{techne01}
  q < \min \{p \mid p \in \partial w(0)\}.
\end{equation}
 the function \; $ s \mapsto  w(1) + q \,
    (s-1)$\;  is a constrained subtangent to $w$ at $s=1$ if
    \begin{equation}\label{techne02}
 q > \max \{p \mid p \in \partial w(1)\}.
\end{equation}
\end{Lemma}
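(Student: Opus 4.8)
The plan is to reduce each statement to a one-sided slope estimate and then integrate. Since the two claims are entirely analogous---the case $s=1$ being obtained from the case $s=0$ by reversing the inequalities and replacing $\min$ by $\max$---I would treat the subtangency at $s=0$ in detail. Writing $\varphi(s) = w(0) + q\,s$, one has $\varphi(0) = w(0)$ by construction, so the only thing to establish is the one-sided inequality $w(s) \geq \varphi(s)$, i.e. $w(s) - w(0) \geq q\,s$, on some interval $(0,\delta)$.

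The key step---and the one I expect to be the crux---is to propagate the strict inequality $q < \min \partial w(0)$ from the single point $0$ to a whole right neighborhood. Let $D_0$ denote the set of all limits $\lim_i w'(s_i)$ with $s_i \to 0$, $s_i \in (0,1)$, and $w$ differentiable at $s_i$; since $w$ is Lipschitz the derivatives are uniformly bounded, so $D_0$ is a nonempty compact subset of $\R$ and $\partial w(0) = \mathrm{co}\, D_0$ has the same minimum as $D_0$. I claim there is $\delta > 0$ with $w'(t) > q$ for almost every $t \in (0,\delta)$. Indeed, were this false, one could find points of differentiability $t_i \to 0^+$ with $w'(t_i) \leq q$; by the uniform bound on $w'$ a subsequence of $w'(t_i)$ converges to some $p \in D_0$ with $p \leq q < \min \partial w(0) = \min D_0$, a contradiction.

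It then remains to integrate. As a Lipschitz function $w$ is absolutely continuous, so for $s \in (0,\delta)$ one has $w(s) - w(0) = \int_0^s w'(t)\,dt \geq \int_0^s q\,dt = q\,s$, using that $w'(t) > q$ for almost every $t \in (0,\delta)$. This gives $w(s) \geq \varphi(s)$ on $(0,\delta)$, so $\varphi$ is a constrained subtangent to $w$ at $0$. For the endpoint $s=1$ the same scheme applies verbatim: from $q > \max \partial w(1)$ one obtains $\delta > 0$ with $w'(t) < q$ for almost every $t \in (1-\delta,1)$, whence $w(1) - w(s) = \int_s^1 w'(t)\,dt < q\,(1-s)$, which rearranges to $w(s) > w(1) + q\,(s-1) = \varphi(s)$ on $(1-\delta,1)$.
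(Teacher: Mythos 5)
Your proof is correct, but it follows a genuinely different route from the paper's in its second half. Both arguments share the same opening move: propagating the strict slope inequality at the endpoint to a whole neighborhood, via the compactness/limiting argument built into the definition of $\partial w$ (the paper phrases this as an interval $I$ around the endpoint on which $q>p$ for every $p\in\partial w(s)$; you prove, by the same contradiction-with-a-limit argument, the weaker a.e.\ pointwise bound $w'(t)>q$, which is all you need). The divergence comes next: the paper assumes for contradiction a point where the subtangent inequality fails and invokes the Mean Value Theorem for generalized Clarke gradients (Theorem 2.3.7 in \cite{Clarke}) to produce $\bar s$ and $\bar p\in\partial w(\bar s)$ with $w(s)-w(1)=\bar p\,(s-1)$, contradicting the slope bound on $I$; you instead argue directly, using that a Lipschitz function is absolutely continuous and integrating the a.e.\ derivative bound, $w(s)-w(0)=\int_0^s w'(t)\,dt\geq qs$. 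What you gain is elementarity and self-containment: your argument needs only a.e.\ differentiability of Lipschitz functions and the fundamental theorem of calculus, with no citation to nonsmooth analysis. What the paper's version buys is that it stays entirely within the set-valued Clarke-gradient formalism it uses elsewhere (notably in the proof of Lemma \ref{chara}), never passing through pointwise derivatives or integration. Your treatment of the endpoint $s=1$ by symmetry is also fine (and note that the strict inequality you obtain there is more than the definition of constrained subtangent requires, which only asks $w\geq\varphi$ near the endpoint).
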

\begin{proof}  We consider the case $s=1$, the assertion at $s=0$  can be proved similarly.
We assume condition \eqref{techne02}.   By the very definition of
$\partial w(1)$ there is an open interval $I$ containing $1$ with
\begin{equation}\label{techne1}
   q >  p \qquad\hbox{for any $s \in I \cap(0,1)$, $p \in \partial
w(s)$.}
\end{equation}
 Assume for purposes  of contradiction that there is $s \in  I
\cap(0,1)$ with
 \begin{equation}\label{techne2}
   w(s) < w(1) + q \,(s-1),
\end{equation}
 by Mean Value Theorem for generalized Clarke gradients (Theorem
2.3.7  in \cite{Clarke}), we find $\ov s  \in (s,1)
\subset I \cap (0,1)$ with
\[ w(s) - w(1)= \ov p \, (s-1) \qquad\hbox{for some $\ov p \in
\partial w(\ov s)$.}\]
We derive, in the light of \eqref{techne2}
\[q \, (s-1) > \ov p \, (s-1)\]
which in turn implies $ q < \ov p$, in contradiction with
\eqref{techne1}.
\end{proof}

\smallskip

\begin{proof} ( {\bf of Lemma \ref{chara}}) The function constantly equal to $c:=-  \frac 1\la \, \max_s  H_\ga(s,0)$ is a subsolution to
\eqref{HJg}. By the coercivity of $H_\ga$,  the family of subsolutions greater than or equal to $c$ is equi--Lipschitz continuous and is in
addition dominated by
 \[-  \frac 1\la \, \min \{ H_\ga(s,p) \mid s \in
[0,1],\; p \in \R\}.\]
 This shows that  $\um^\ga$ is finite valued
and    Lipschitz continuous. By standard arguments in viscosity solutions theory, the  maximality of $\um^\ga$   implies that it is a solution
in $(0,1)$, and satisfies the state constraints boundary condition at $s =0,\, 1$.

Assume now, for purposes of contradiction, that there is another
solution $w$ of the equation plus state constraints boundary
conditions. We set
\[ - \de = \min_{[0,1]} (w - \um^\ga) < 0.\]
We can use  suitable  sup--convolutions of $\um^\ga$ as test functions from below to prove that the minimizers of $w -\um^\ga$
cannot be interior points of the interval.   To show that they
cannot be boundary points,  we exploit Lemma  \ref{techne}. Assume,
to fix ideas, that $1$ is such a a minimizer. Therefore $\um^\ga-\de$ is
a constrained subtangent to $w$ at $1$. We set
\[p_0= \max \{p \mid p \in \partial \um^\ga(1)\},\]
 by  the definition of  $\partial\um^\ga$,  there is a sequence $s_i$  of differentiability points of $\um^\ga$ in $(0,1)$  converging to $1$
 with
\[ (\um^\ga) '(s_i) \longrightarrow p_0.\]
Since
\[\la(\um^\ga(s)-\de)+H_\ga(s, (\um^\ga)'(s))=-\la\de  \]
at any differentiability point $s$  of $\um^\ga$,  we derive  by the  continuity of $H_\ga$
\begin{equation}\label{chara03}
    \la \,(\um^\ga(1) -\de) + H_\ga(1,p_0)<0,
\end{equation}
and we can therefore find $q > p_0$
with
\begin{equation}\label{quella}
  \la w(1) + H_\ga(1,q)= \la \, (\um^\ga(1)- \de) + H_\ga(1,q) < 0.
\end{equation}
By Lemma \ref{techne} the function $s \mapsto (\um^\ga(1) - \de)  + q \, (s-1)$ is
constrained subtangent to $(\um^\ga - \de)$ at $1$ and consequently also to $w$
at $1$. Inequality \eqref{quella} shows that $w$ does not satisfy
the state constraint boundary condition  at $1$, reaching a
contradiction.\\

\end{proof}

\smallskip

\begin{proof}({\bf of Lemma \ref{wellposed}})
 The function  $v \equiv c$ with
\[c= \min \left\{- \frac 1 \la \, \max_{s \in[0,1]} H_\ga(s,0)\, ,\, \al
\right \},\]    is subsolution to
\eqref{HJg} taking a value less than or equal to $\al$ at $s=0$.  We deduce that
\[ u^\ga_\al(s)= \sup \{ v(s) \mid v \:\hbox{subsolutions to \eqref{HJg} with $v(0) \leq \al$, $v \geq c$}\}\]
and  by the coercivity of $H_\ga$  the functions of this family are  equi--Lipschitz continuous   and equibounded.  This proves that $u^\ga_\al$
is a Lipschitz continuous subsolution to \eqref{HJg}. The supersolution  property and the validity of the state constraint boundary condition
at $s=1$ are straightforward consequences of the maximality property.

If $\al \leq \um^\ga(0)$ then there is a subsolution taking the value $\al$ at $0$ and consequently by maximality $u^\ga_\al(0)=\al$.
Conversely, if $u^\ga_\al(0)= \al$ then $\um^\ga(0) \geq u^\ga_\al(0) = \al$.
\end{proof}

\smallskip

\begin{proof} ({\bf of Proposition \ref{fork}}) Let $u$ be a solution to \eqref{HJg} plus Dirichlet boundary conditions. The asserted
uniqueness comes from Theorem \ref{theo:comprinc} and \eqref{fork2} is a direct consequence of the definition of $u^\ga_\al$, $u^{-\ga}_\be$
 and Remark \ref{remfork}. Conversely, let us assume \eqref{fork2},   we
 define
\begin{eqnarray*}
 \ov u(s) &=& \min \{  u^\ga_\al(s),  u^{-\ga}_\be(1-s)\} \\
  \un u(s) &=& \max \{  u^{-\ga}_\be(1-s) + \al -  u^{-\ga}_\be(1) \, , \,  u^\ga_\al(s) + \be -
  u^\ga_\al(1)\}.
\end{eqnarray*}
The functions $\ov u$, $\un u$ are super and subsolutions to
\eqref{HJg}, respectively.   We derive from \eqref{fork2} and Remark
\ref{remfork} that
\[ \al\leq u^{-\ga}_\be(1) \leq \um^\ga(0) \]
 and so
$u^\ga_\al(0)= \al$ by Lemma \ref{wellposed}  and $\ov u(0)= \al$. We
also have by \eqref{fork2}
\[\al=u^\ga_\al(0) \geq u^\ga_\al(0) + \be -u^\ga_\al(1)\]
 which implies $\un u(0)= \al$.  Similarly
\[ \be \leq u^\ga_\al(1) \leq \um^\ga(1)\]
which implies $u^{-\ga}_\be (0) = \be$ and $\ov u(1)= \be$, in
addition
\[\be = u^{-\ga}_\be(0)  \geq u^{-\ga}_\be(0) +\al
-u^{-\ga}_\be(1)\] which gives $\un u(1)= \be$.
  This shows that $\un
 u$, $\ov u$ satisfy the same boundary Dirichlet conditions and are,
 in addition, both Lipschitz--continuous. Existence of the claimed
 solution then comes via a straightforward application of Perron
 Method, see \cite{Barles}.
\end{proof}

\smallskip

\begin{proof}{ \bf of Lemma \ref{asy}} We have  that
\[\la_n \, u_n(s)  \geq \min \{- \max_s H_\ga(s,0),\, \al -1 \}\qquad\hbox{for any $s \in [0,1]$, $n$ large.}\]
 This implies that the $u_n$ are equibounded and equi--Lipschitz continuous.
They  therefore converge, up to
subsequences, to some function $u$  with $u(0)= \al$. By stability
properties of viscosity solutions $u$ solves  \eqref{HJeikloc}.
Therefore
\begin{equation}\label{asy0}
   u \leq \al + v.
\end{equation}
If $a_\ga= 0$ then the above inequality must be an equality. If
instead  $ a_\ga < 0$ then there is a strict subsolution $w$ of $H_\ga=
0$ with
\begin{equation}\label{asy01}
  H_\ga(s,w') \leq - \de \quad\hbox{ for a suitable $\de >0$  \; and }  w(s) \leq  0,
\end{equation}
 We consider a sequence of positive numbers $\mu_k$ converging to
$1$ and  a subsequence
$\la_{n_k}$ of $\la_n$ with
\begin{equation}\label{asy2}
    \la_{n_k} \leq \frac{(1- \mu_k)\de}{\mu_k} \, \frac 1M,
\end{equation}
where $M$ is an upper bound of $\al_n + v(s)$ for  $n$ large and $s$ varying in
$[0,1]$. We exploit  \eqref{asy01},  \eqref{asy2} and the convex character of $H_\ga$  to
get
\begin{eqnarray*}
   && \la_{n_k} (\mu_k \, (\al_{n_k} + v) + (1- \mu_k) \, w) + H_\ga(s,\mu_k \, Dv + (1- \mu_k) \,D w) \\
  &\leq& \la_{n_k} \, \mu_k \, (\al_{n_k} + v)  - (1-\mu_k) \, \de  \leq \la_{n_k} \, \mu_k \, M  -
  (1-\mu_k) \, \de \\  &\leq& \frac{(1-\mu_k) \, \de}{\mu_k} \, \frac 1M \, \mu_k \, M   -(1-\mu_k) \, \de= 0 .
\end{eqnarray*}
We thus see  that $\mu_k \, (\al_{n_k} + v) + (1- \mu_k)
\, w$ is subsolution to \eqref{HJg} with $\la =  \la_{n_k}$ taking
in addition, by \eqref{asy01}, a value less than $\al_{n_k}$ at $s=
0$, at least for $k$ large.   We infer by the maximality property of
$u_{n_k}$
\[ u_{n_k} \geq \mu_k \,(\al_{n_k} + v) + (1- \mu_k) \, w \qquad\hbox{in
$[0,1]$,}\] so that
\[\liminf_k u_{n_k} \geq \lim_k \mu_k \, (\al_{n_k} + v) + (1- \mu_k) \, w =
(\al + v).\] The above relation, together with \eqref{asy0},
shows the assertion.

\end{proof}

\end{appendix}
\bigskip

\vspace{10 pt}

\end{document}